\theoremstyle{plain}
\newtheorem{lem}{Lemma}[section]
\newtheorem{thm}[lem]{Theorem}
\newtheorem{cor}[lem]{Corollary}
\theoremstyle{definition}
\newtheorem{defn}{Definition}[section]
\theoremstyle{remark}
\newtheorem{rem}{Remark}[section]
\newcommand{\p}{\partial}
\newcommand{\ds}{\displaystyle}
\newcommand{\vp}{\varepsilon}
\newcommand{\ms}{\medskip}
\newcommand{\R}{ \mathbb{R}}
\def \e{\ensuremath{\mathrm{e}}}
\def \i{\ensuremath{\mathrm{i}}}
\def \d{\ensuremath{\mathrm{d}}}
\begin{document}

\title{ \large\bf  Simultaneously Cloaking Electric and Hydrodynamic Fields via Electro-osmosis}
\author{
Hongyu Liu\thanks{Department of Mathematics, City University of Hong Kong, Kowloon, Hong Kong, China.\ \ Email: hongyu.liuip@gmail.com; hongyliu@cityu.edu.hk}
\and
Zhi-Qiang Miao\thanks{School of Mathematics, Hunan University, Changsha 410082, Hunan Province, China. Email: zhiqiang\_miao@hnu.edu.cn}
\and
Guang-Hui Zheng\thanks{School of Mathematics, Hunan University, Changsha 410082, Hunan Province, China. Email: zhenggh2012@hnu.edu.cn; zhgh1980@163.com}
}
\date{}%
\maketitle
\begin{abstract}
In this paper, we develop a general mathematical framework for the electro-osmosis problem to design simultaneous microscale electric and hydrodynamic cloaking in a Hele-Shaw configuration. A novel approach to achieving simultaneously cloaking both the electric and flow fields through a combination of scattering-cancellation technology and an electro-osmosis effect is proposed. In the design, the electric field is manipulated with scattering-cancellation technology while the pressure with electro-osmosis effect. As proof of this concept, the perfect electric and hydrodynamic cloaking conditions are derived for the cloaks with the cross-sectional shape being annulus or confocal ellipses using the layer potential techniques. Furthermore, we also propose an optimization scheme for the design of approximate cloaks within general geometries and prove the well-posedness of the optimization problem. In particular, the conditions that can ensure the simultaneous occurrence of approximate cloaks for general geometries are also established.  Our theoretical findings are validated by a variety of numerical results and guide efficiently designing electric-related multiphysics cloaking.
\end{abstract}

\smallskip
{\bf keywords}: Multiphysics cloaking; electro-osmosis; layer potential; optimization scheme.
\section{Introduction}
Invisibility cloaking has long been a topic of intense study, which draws human curiosity. There are many studies in the fields of physics and mathematics
in various scientific areas, such as acoustic waves \cite{ammari2013, deng2017, Kohn2010, Liu2009}, conductive heat flux \cite{Craster2017, Narayana2012}, electromagnetic waves\cite{bao2014, deng2017(1), greenleaf2009, Leonhardt2006, Pendry2006}, pressure field in an elastic medium \cite{li2016, li2018, Stenger2012}, dc electric currents \cite{Yang2012}, and matter waves \cite{Zhang2008}.
 Most of the early research on invisibility cloaking struggled to satisfy the requirements of one set of physical equations that only controlled and realized one physical phenomenon. However, in the real world, multiphysics is present in many aspects of daily life, industry, and nature,  and it is crucial to take this into account when developing practical applications. Hence, creating a multiphysics cloak that can simultaneously make objects invisible from  multiple physical field measurements needs to be solved, which is a fascinating and challenging subject.
Based on the same Laplacian forms of governing equations in both electrical conduction and the thermal diffusion processes, Li et al. \cite{li2010} first explored whether the same structure can realize a cloaking effect in these two fields simultaneously. Then, Moccia et al. \cite{Ma2014} experimentally constructed a multi-physics cloak that breaks the limitation that the cloaking devices can only cloak a single physical field.
Meanwhile, research on multi-physical field control is gradually expanding from thermal electrostatic fields \cite{li2010, Ma2014} to other physical fields, such as carpet cloak for electromagnetic, acoustic and water waves simultaneously \cite{Xu2015, Yang2016}, thermo-hydrodynamic cloak \cite{Wang2021, Yeung2020}, magnetostatic-acoustic cloak \cite{Zhou2020a}, and electromagnetic-acoustic cloak \cite{Fujii2022, Sun2020, Zhou2020b}.
In addition, several experiments to cloak an object in multi-physical flows have been attempted using a combination of passive and active schemes \cite{Lan2016} and optimization methods \cite{Fujii2019, Xu2020} in order to overcome the complicated structural challenges of metamaterials.
Although all these works try to fabricate cloaking devices that can be applicable in multiple physical fields, it still lacks a general theory, such as, transformation optics and scattering-cancellation technology, to achieve multi-physical field cloaking.
Therefore, there are a lot of works to be studied in multiphysics cloaking.

Recently, Boyko et al. \cite{Boyko2021} proposed a novel theoretical framework and an experimental demonstration of hydrodynamic cloaking and shielding in a Hele-Shaw cell via electro-osmosis. The method has attracted our attention. We first develop a general mathematical framework \cite{Liu2023a} for perfect and approximate hydrodynamic cloaking and shielding of electro-osmotic flow in the spirit of Boyko's work. Then, we address the concept of enhanced near-cloaking \cite{Liu2023b} in the context of microscale hydrodynamics using electro-osmosis by the perturbation theory. Note that electro-osmosis is a coupled multiphysics process involving electric, pressure and flow fields.
However, in \cite{Boyko2021, Liu2023a, Liu2023b} authors only focus on the hydrodynamic cloaking, but ignore the electric field cloaking. From the inverse problem perspective, it is true that one can not detect an object surrounded by a hydrodynamic cloak by pressure and flow field measurements, but this object can be detected by electric field measurement. Therefore, simultaneous electric cloaking is very necessary. Based on this background and motivation, in this paper we study the simultaneous electric and hydrodynamic cloaking in electro-osmosis model.
More importantly, in the electro-osmosis model, the pressure and flow fields are dependent on the electric field, but not vice versa.
This so-called one-way coupling between the electric field and flow field allows us to analyze electric cloaking independent of hydrodynamic cloaking, if desired. Hence, before proceeding to
analyze hydrodynamic cloaking, we first present the theory of pure electric cloaking.
Throughout this paper, we apply the scattering-cancellation technology \cite{Alù2005, Chen2012} to construct an electric cloak with a monolayer structure and homogeneous isotropic material. By solving the governing equations directly, we formulate the specific parameter requirements for desired cloaking functionality. After the occurrence of electric cloaking, we further derive the hydrodynamic cloaking conditions using the methods in \cite{Boyko2021, Liu2023a}. It is worth noting  that the hydrodynamic cloaking conditions will be changed due to the changes in the electric field compared with that of \cite{Liu2023a}.

The purpose of this paper is to establish a more general mathematical framework for simultaneous microscale electric and hydrodynamic cloaking.
Our approach extends to the method first provided in \cite{Liu2023a} to achieve multiphysics cloaking for the electro-osmosis problem from multiphysical field measurements. It is based on the multi-cloak structure \cite{Raza2015} in which each cloak works as an invisibility cloak for a specific physical phenomenon. The design blueprint of the multiphysics cloaking structure is presented briefly as follows. We first design a structure coated around a perfect insulator to achieve electric cloaking and then design the hydrodynamic cloak by the method provided in \cite{Liu2023a} when the electric cloaking happens.  Finally, the multiphysics cloaking structure is obtained by wrapping the hydrodynamic cloak around an object which consists of a perfect insulator and electric cloak.
For more details, the contributions of this work are fourfold:
\begin{itemize}
\item{We present the rigorous mathematical definition of simultaneous electric and hydrodynamic cloaking, based on the physics and mathematics literature \cite{Boyko2021, Liu2023a}.}
\item{The representation formula of the solution of the coupled system is established by single-layer potential, which can ensure the well-posedness of the electro-osmosis problem  and gives a quantitative description.}
\item{In the so-called layer approach, we can determine sharp conditions that can guarantee the simultaneous occurrence of both electric and hydrodynamic cloaking for annulus (radial case) and confocal ellipses (non-radial case). In particular, for the confocal ellipses case which is not covered in \cite{Boyko2021}, we introduce an additional elliptic coordinates technique to address the challenge caused by non-radial geometry.}
\item{Furthermore, we provide an optimization technique for designing simultaneous electric and hydrodynamic cloaking and demonstrate the well-posedness of the corresponding optimization problem for more general geometry.  More importantly, we also establish the condition under which approximation cloaks are guaranteed to occur simultaneously for arbitrary geometry. Numerous numerical experiments including smooth objects, non-smooth objects and multiple objects, indicate that the optimized zeta potential can achieve approximate electric and hydrodynamic cloaking simultaneously.}
\end{itemize}

The rest of the paper is organized as follows. We begin with the setting of the problem and main results in Section \ref{sec:problem}. In Section \ref{sec:layer-potentials}, we first review some preliminary knowledge on boundary layer potentials and then establish the representation formula of the solution of the governing equations.
In Section \ref{sec:simultaneous-cloaking}, we devote to the study of the simultaneous electric and hydrodynamic cloaking conditions by the analytical solutions and the optimal method, respectively.
In Section \ref{sec:NumSim}, we provide numerical examples to justify our theoretical results. The paper ends with a conclusion.

\section{Setting of the problem and main results}\label{sec:problem}
We begin with conventional electro-osmosis and introduce some notations.
Consider two parallel insulating plates of length $\tilde{L}$ and width $\tilde{W}$ with a thin gap, defined as the $\tilde{x}_3 = 0, \tilde{h}$ planes in a system of Cartesian coordinates $\tilde{x} =(\tilde{x}_1, \tilde{x}_2, \tilde{x}_3) \in \R^3$, confining an electrolyte solution of density $\tilde{\rho}$, dielectric permittivity $\tilde{\varepsilon}_m$, and viscosity $\tilde{\mu}$.  The structure forms a Hele-Shaw configuration in $\R^3$, as shown in Figure \ref{fig-schematic}(a). The fluid is assumed incompressible and a low Reynolds number description is valid.
The arbitrary zeta-potential distribution in the lower and upper plates are given by $\tilde{\zeta}^L = \tilde{\zeta}^L(\tilde{x})$ and $\tilde{\zeta}^U = \tilde{\zeta}^U(\tilde{x})$. Let $\tilde{\bm{v}} = (\tilde{\bm{u}}(\tilde{x}, \tilde{t}),\tilde{w}(\tilde{x}, \tilde{t}))$ and $\tilde{p} = \tilde{p}(\tilde{x})$ denote the velocity field and  the pressure of the fluid,  respectively.  Here $\tilde{\bm{u}}(\tilde{x}, \tilde{t})$ is  the in-plane velocity field. If an electrostatic in-plane electric field $\tilde{\bm E}$ is applied
parallel to the plates, the fluid motion is then governed by the continuity and momentum equations
\begin{equation*}
  \tilde{\nabla}\cdot \tilde{\bm v} = 0, \qquad \tilde{\rho}\Big( \frac{\p \tilde{\bm v}}{\p \tilde{t}}+ \tilde{\bm v}\cdot  \tilde{\nabla}\tilde{\bm v}\Big)
 =  -  \tilde{\nabla} \tilde{p} + \tilde{\mu} \tilde{\Delta}  \tilde{\bm v},
\end{equation*}
with  the Helmholtz--Smoluchowski slip boundary conditions
\begin{equation*}
  \tilde{\bm{u}}|_{\tilde{x}_3=0}= -\frac{\tilde{\varepsilon}_m\tilde{\zeta}^L \tilde{\bm E}}{\tilde{\mu}}, \qquad   \tilde{\bm{u}}|_{\tilde{x}_3=\tilde{h}}= -\frac{\tilde{\varepsilon}_m\tilde{\zeta}^U \tilde{\bm E}}{\tilde{\mu}},
\end{equation*}
where we shall  deal with electromagnetic phenomena in the electrostatic regime because the fluid in the bulk is electrically neutral and the electric displacement $\tilde{\bm D}$ is solenoidal, i.e., $\tilde{\nabla} \times \tilde{\bm E}  = 0$ and $\tilde{\nabla} \cdot \tilde{\bm D}  = 0$.  The electric field can be expressed through an electrostatic potential $\tilde{\varphi}$,  $\tilde{\bm E} = -\tilde{\nabla} \tilde{\varphi} $,  that is governed by the Laplace equation $\tilde{\nabla}\cdot (\tilde{\vp}\nabla\tilde{\varphi}) =0$ using the constitutive relation $\tilde{\bm D}= \tilde{\vp}\tilde{\bm E}$, which satisfies the insulation boundary conditions on the walls, $\frac{\p \tilde{\varphi} }{\p \tilde{x}_3}|_{\tilde{x}_3=0, \tilde{h}} = 0$.
 For convenience in what follows, throughout this paper, we mark the dimensional variables that appear in these equations with a tilde, for instance, $\tilde{\bm u}$, $\tilde{p}$, and so on. Moreover, we also denote the dimensional gradient and Laplace operator as $\tilde{\nabla}$ and $\tilde{\Delta}$.  In contrast to this, we mark these dimensionless variables and operators  without a tilde.

We next derive dimensionless forms of the governing creeping-flow equations and boundary conditions for the  electro-osmosis problem.
In microscale flows, fluidic inertia is commonly negligible compared to viscous stresses. Further, the gap is narrow. Therefore, we restrict our analysis to shallow geometries ($\tilde{h}\ll \tilde{L}, \tilde{W}$) and neglect fluidic inertia, indicated by a small reduced Reynolds number.
By utilizing the lubrication approximation theory, we average over the depth of the cell and reduce the analysis to a two-dimensional problem in $\R^2$. The governing equations for the depth-averaged velocity $\bm \tilde{\bm u}_{aver}$, the pressure $\tilde{p}$, and the electrostatic potential $\tilde{\varphi}$ in $\tilde{x}_1$--$\tilde{x}_2$ plane are (see \cite{Boyko2021} )
\begin{equation}\label{dimension-equations}
 \tilde{\bm u}_{aver} = - \frac{\tilde{h}^2}{12 \tilde{\mu}} \tilde{\nabla} \tilde{p} + \tilde{\bm u}_{slip},  \quad \tilde{\Delta}\tilde{p} = \frac{12 \tilde{ \varepsilon}}{\tilde{h}^2}\tilde{\nabla}\tilde{\varphi}\cdot \tilde{\nabla}\tilde{\zeta}_{mean} \quad \mbox{and} \quad \tilde{\nabla}\cdot (\tilde{\vp}\nabla\tilde{\varphi}) = 0,
\end{equation}
where $\bm \tilde{\bm u}_{aver}$ is the average value of $\tilde{\bm u}$ over $\tilde{x}_3$ by integration, $\tilde{\bm u}_{slip}=\frac{\tilde{\varepsilon}}{\tilde{\mu}}\tilde{\zeta}_{mean}\tilde{\nabla}\tilde{\varphi}$ is the depth-averaged Helmholtz-Smoluchowski slip velocity, $\tilde{\zeta}_{mean}$ is the arithmetic mean value of the zeta potential on the lower and upper plates, i.e., $\tilde{\zeta}_{mean}=(\tilde{\zeta}^L+\tilde{\zeta}^U)/2$. In the subsequent analysis, we assume that the zeta potential on the lower and upper plates is the same.
Scaling by the characteristic dimensions, we introduce the following dimensionless equations:
\begin{equation}\label{dimensionless-equations}
 \bm u_{aver}  =- \frac{1}{12} \nabla p - \zeta_{mean}  \nabla \varphi,\quad \Delta p = -12 \nabla \varphi \cdot \nabla \zeta_{mean}  \quad \mbox{and} \quad \nabla \cdot(\vp \nabla\varphi) = 0,
\end{equation}
where $\bm u_{aver}$, $p$, $\varphi$ and $\zeta_{mean}$ are non-dimensional normalized variables.

Besides these equations and the geometry of the flow domain, we also need  the boundary conditions that apply at the boundaries of the domain to fully describe the flow. These boundary conditions contain the form and magnitudes of the velocity on the left inlet and right outlet to the domain.
In this paper, we mainly consider a pillar-shaped object with an arbitrary cross-sectional shape confined between the walls of a Hele-Shaw cell and subjected to a pressure-driven flow with an externally imposed mean velocity $\tilde{u}_{ext}$ and electric field $\tilde{E}$ along the $\tilde{x}_1$--axis, as illustrated in Figure \ref{fig-schematic}. The reduced two-dimensional problem is shown in Figure \ref{fig-schematic}(b). We solve the problem assuming an unbounded domain, enabled by the fact that the boundaries of the chamber are located far from the cloaking region.
\begin{figure}[H]
	\centering  
	\subfigcapskip=-10pt 
	\subfigure[]{
		\includegraphics[width=0.5\linewidth]{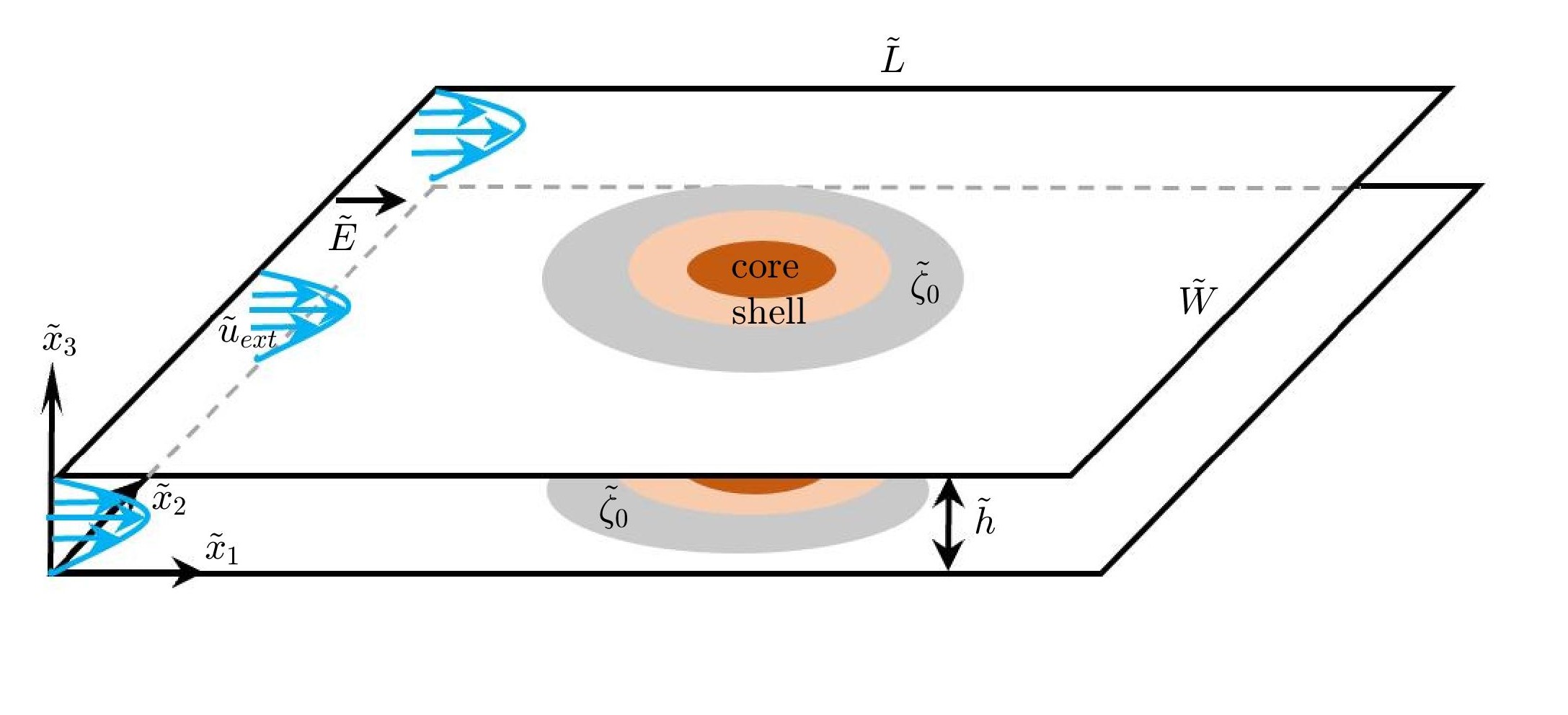}}
	\subfigure[]{
		\includegraphics[width=0.35\linewidth]{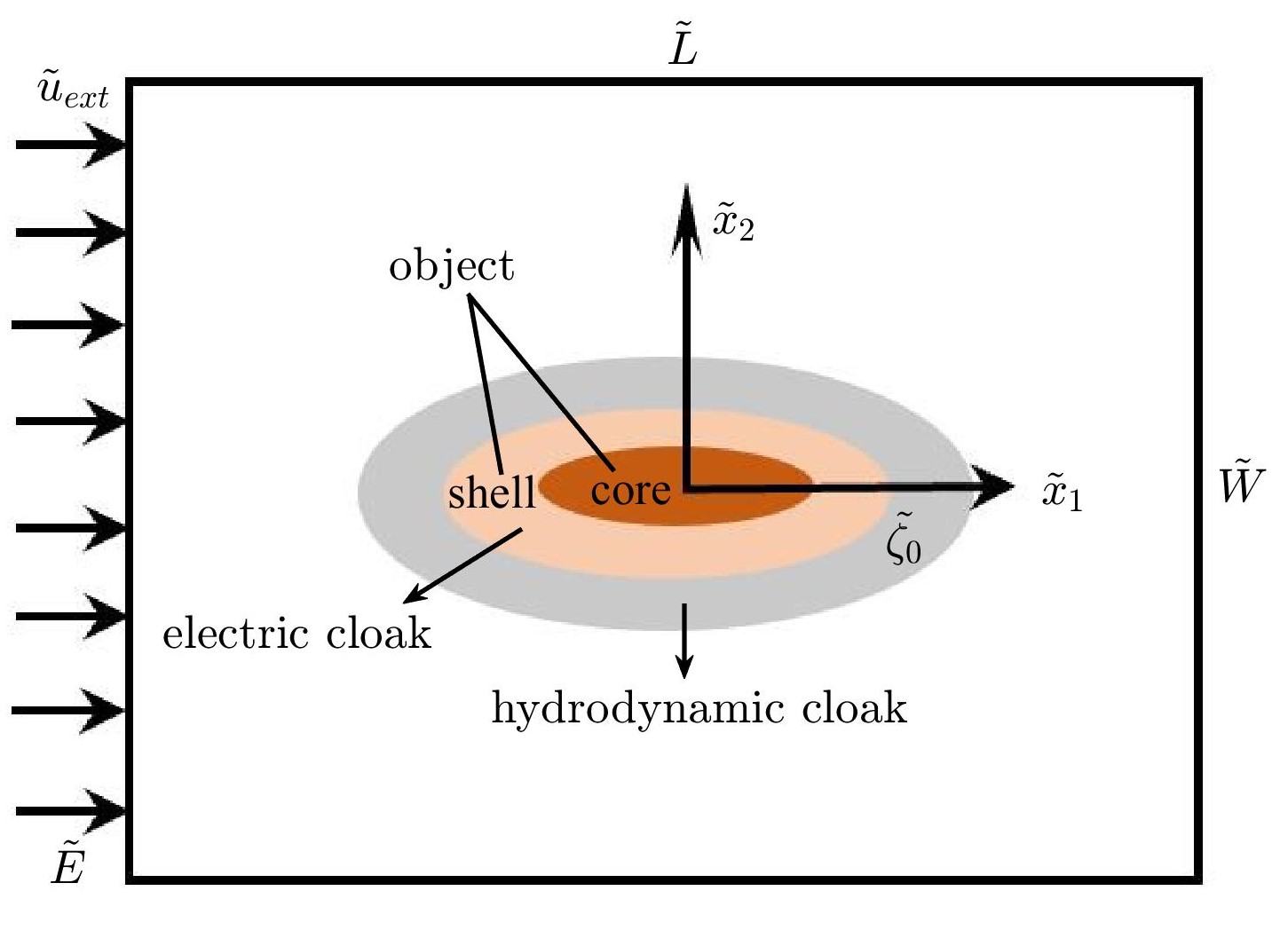}}
	\caption{Schematic illustration of the Hele-Shaw configuration. (a) the three-dimensional electro-osmosis model. (b) the reduced two-dimensional problem.
 }\label{fig-schematic}
\end{figure}

In order to cloak the object electrically and hydrodynamically, we are concerned with the scattering by the insulation core coated by a shell and the object surrounded by a region, i.e., with exterior boundary value problems for the Laplace equation.
To mathematically state the problem, let  $B$ (core),  $D$ (object)  and $\Omega$ be  bounded domains in $ \mathbb{R}^2$ with $\overline{B}\subseteq\overline{D}\subseteq \Omega$. Throughout this paper, we assume that  $B$, $D$ and $\Omega$ are of class $C^{1,\alpha}$ for some $0<\alpha<1$.  Let $H(x)$ and $P(x)$ be the harmonic functions  in $\mathbb{R}^2$, denoting the background electrostatic potential and pressure field.
The permittivity distribution in $ \mathbb{R}^d\setminus\overline{D}$ is given by
\begin{equation*}
\varepsilon =
  \begin{cases}
\ds \varepsilon_m &\quad \mbox{in } \mathbb{R}^d\setminus\overline{D},\\
\ds\varepsilon_s &\quad \mbox{in } D \setminus\overline{B},\\
\ds 0 &\quad \mbox{in } B.
  \end{cases}
\end{equation*}
For a given constant parameter $\zeta_0 \in \R$, the  zeta potential distribution in
$\mathbb{R}^2\setminus\overline{D}$ is given by
\begin{align*}
          \zeta_{mean}   =
            \begin{cases}
            \ds \zeta_0 & \quad \mbox{in } \Omega \setminus\overline{D},\ms\\
            \ds 0 & \quad \mbox{in } \mathbb{R}^2\setminus\overline{\Omega}.
            \end{cases}
\end{align*}

We may consider the configuration as an insulation core $B$ coated by the shell $D \setminus\overline{B}$ with permittivity $\varepsilon_s$ and a no-penetration object $D$ coated by the control region $\Omega \setminus\overline{D}$  with zeta potential $ \zeta_0$.  From the equation \eqref{dimensionless-equations} and the assumption of unbounded domain, the governing equations for non-uniform electro-osmotic flow via a Hele-Shaw configuration is modeled as follows:
\begin{align}\label{electro-osmotic equation}
\begin{cases}
\ds \nabla \cdot(\vp \nabla\varphi)= 0 \quad \ &\mbox{in} \  \mathbb{R}^2\setminus\overline{B},\ms\\
\ds \frac{\partial \varphi}{\partial \nu} = 0  & \mbox{on } \partial B, \ms \\
\ds \varphi|_{+}=\varphi|_{-} & \mbox{on }  \partial D,\ms\\
\ds \varepsilon_m\frac{\partial \varphi}{\partial \nu} \Big|_{+} = \varepsilon_s \frac{\partial \varphi}{\partial \nu} \Big|_{-}  & \mbox{on } \partial D,\ms \\
\ds  \varphi = H(x) + O(|x|^{-1}) & \mbox{as } |x|\rightarrow + \infty,\ms\\
\ds \Delta p= 0 & \mbox{in }  \mathbb{R}^2\setminus\overline{D}, \ms\\
\ds \frac{\partial p}{\partial \nu} = -12 \zeta_0\frac{\partial \varphi}{\partial \nu}  & \mbox{on } \partial D, \ms\\
\ds p|_{+}=p|_{-} & \mbox{on }  \partial \Omega,\ms\\
\ds \frac{\partial p}{\partial \nu} \Big|_{+} - \frac{\partial p}{\partial \nu} \Big|_{-} = 12 \zeta_0\frac{\partial \varphi}{\partial \nu} & \mbox{on } \partial \Omega,\ms \\
\ds  p = P(x) + O(|x|^{-1}) & \mbox{as } |x|\rightarrow +\infty,
\end{cases}
\end{align}
where $ \frac{\partial }{\partial \nu}$ denotes the outward normal derivative and we use the notation $\frac{\partial p}{\partial \nu}\big|_{\pm}$ indicating
$$
\frac{\partial p}{\partial \nu}\bigg|_{\pm}(x):=\lim_{t\rightarrow 0^+}\langle \nabla p(x\pm t\nu(x)),\nu(x) \rangle, \ \ x\in \p \Omega,
$$
where $\nu$ is the outward unit normal vector to $\p \Omega$.

We are now in a position to introduce the definition of electric and hydrodynamic cloaking which plays a central role in this paper.
\begin{defn}\label{def-cloaking}
The quintuples $\{B, D, \Omega; \varepsilon_s,\zeta_0\}$ is said to be a perfect electric and hydrodynamic cloaking, if
\begin{equation}\label{cond-cloaking}
\bm E = -\nabla H \quad \mbox{in }  \mathbb{R}^2\setminus\overline{D}\quad \mbox{and}\quad
\bm u_{aver}  = - \nabla P / 12 \quad  \mbox{in }  \mathbb{R}^2\setminus\overline{\Omega},
\end{equation}
where $\bm u_{aver}$ is the dimensionless version of  $\tilde{\bm u}_{aver}$ defined in \eqref{dimension-equations}.
If the notation $"="$ is replaced by $"\approx"$, then it is called a near/approximate hydrodynamic cloaking.
\end{defn}

Away from the hydrodynamic cloaking region, the pressure is related to the velocity field through $ \bm{u}_{aver} = -\nabla p/12 $ subjected to the
boundary condition $p(x)=P(x)$ as $|x|\rightarrow\infty$, and therefore, according to the Definition \ref{def-cloaking}, the condition (\ref{cond-cloaking}) can be expressed in terms of the electrostatic potential and pressure as
\begin{align}\label{cond-cloaking-p}
\varphi = H \quad \mbox{in }  \mathbb{R}^2\setminus\overline{D}\quad \mbox{and}\quad p(x)=P(x), \quad x\in \mathbb{R}^2\setminus\overline{\Omega}.
\end{align}
In this paper, we assume $B$, $D$ and $\Omega$ are known, and want to find appropriate permittivity $\varepsilon_s$ and zeta potential $\zeta_0$ to achieve the electric and hydrodynamic cloaking simultaneously.

Our main results in this paper are given in the following theorems. The proofs are given in Section \ref{subsec:cloaking-annulus} \ref{subsec:cloaking-ellipses} and \ref{subsec:optimal}, respectively.
\begin{thm}\label{main-thm-annulus}
Let the domains $B$, $D$ and $\Omega$ be concentric disks of radii $r_o$, $r_i$ and $r_e$, where $r_e>r_i>r_o$. Let $H(x) = r^n\e^{\i n \theta}$ and $P(x) = 12 r^n\e^{\i n \theta}$ for $n\geq 1$. If
\begin{equation}\label{annulus-cloaking-zeta}
\varepsilon_s=\frac{r_i^{2n}+r_o^{2n}}{r_i^{2n}-r_o^{2n}}\varepsilon_m \quad \mbox{and} \quad \zeta_0=\frac{2r_i^{2n}}{r_e^{2n}-r_i^{2n}},
\end{equation}
then the perfect electric and hydrodynamic cloaking occur simultaneously.
\end{thm}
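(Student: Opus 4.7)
The plan is to exploit the radial symmetry of the concentric-disk geometry and the special form of the boundary data to turn the transmission problems in \eqref{electro-osmotic equation} into scalar algebraic systems via separation of variables. Because the electric field is one-way coupled to the pressure (the equations for $\varphi$ do not involve $p$), I would first solve the electric transmission problem, then substitute the resulting normal derivative $\partial_\nu\varphi$ into the pressure system. By the well-posedness established in Section \ref{sec:layer-potentials} (via layer potentials), it suffices to exhibit a solution matching all interface and decay conditions; uniqueness then forces this to be \emph{the} solution, so verifying \eqref{cond-cloaking-p} reduces to checking that the exterior $r^{-n}e^{\i n\theta}$ mode vanishes.

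For the electric part, I would write in polar coordinates
\[
\varphi(r,\theta)=\begin{cases} r^n\e^{\i n\theta}+c_1\,r^{-n}\e^{\i n\theta}, & r>r_i,\\[2pt] (a\,r^n+b\,r^{-n})\e^{\i n\theta}, & r_o<r<r_i,\end{cases}
\]
and impose cloaking by setting $c_1=0$. The Neumann condition $\partial_r\varphi=0$ on $\partial B$ yields $b=a\,r_o^{2n}$; continuity of $\varphi$ on $\partial D$ fixes $a=r_i^{2n}/(r_i^{2n}+r_o^{2n})$; and the flux condition $\varepsilon_m\partial_r\varphi|_+=\varepsilon_s\partial_r\varphi|_-$ on $\partial D$ collapses to $\varepsilon_m=\varepsilon_s(r_i^{2n}-r_o^{2n})/(r_i^{2n}+r_o^{2n})$, giving precisely the prescribed $\varepsilon_s$ in \eqref{annulus-cloaking-zeta}. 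With this $\varepsilon_s$ all four interface conditions hold with $c_1=0$, so the exterior field is $H$ and electric cloaking is achieved. In particular, on $\partial D$ we obtain $\partial_\nu\varphi|_+ = n\,r_i^{n-1}\e^{\i n\theta}$, which serves as forcing data for the pressure system.

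For the hydrodynamic part, I would use the analogous ansatz
\[
p(r,\theta)=\begin{cases} 12\,r^n\e^{\i n\theta}+C_1\,r^{-n}\e^{\i n\theta}, & r>r_e,\\[2pt] (A\,r^n+B\,r^{-n})\e^{\i n\theta}, & r_i<r<r_e,\end{cases}
\]
and again set $C_1=0$. The three remaining conditions on $\partial D$ (Neumann with $-12\zeta_0\,\partial_\nu\varphi$) and on $\partial\Omega$ (continuity and jump of normal derivative driven by $12\zeta_0\,\partial_\nu\varphi$) give
\[
A-B\,r_i^{-2n}=-12\zeta_0,\qquad A+B\,r_e^{-2n}=12,\qquad A-B\,r_e^{-2n}=12-12\zeta_0.
\]
Subtracting the last two equations yields $B=6\zeta_0\,r_e^{2n}$ and $A=12-6\zeta_0$; substituting into the first gives the single consistency relation $12=6\zeta_0\bigl((r_e/r_i)^{2n}-1\bigr)$, equivalently $\zeta_0=2r_i^{2n}/(r_e^{2n}-r_i^{2n})$, matching \eqref{annulus-cloaking-zeta}.

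There is no genuine obstacle beyond bookkeeping: the main subtlety is making sure that choosing $c_1=C_1=0$ in the exterior is consistent with all interface conditions \emph{simultaneously}, which is exactly what forces the two displayed formulas for $(\varepsilon_s,\zeta_0)$. The only analytical input used beyond explicit computation is uniqueness of the solution to \eqref{electro-osmotic equation} in the class of $O(|x|^{-1})$ perturbations of $(H,P)$, which is already guaranteed by the layer-potential representation from Section \ref{sec:layer-potentials}; this allows me to conclude from the ansatz-matching that no other harmonics are excited and that \eqref{cond-cloaking-p} holds throughout the exterior regions.
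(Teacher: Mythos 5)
Your computation is correct and yields exactly the formulas in \eqref{annulus-cloaking-zeta}; the logical direction of the theorem (given $\varepsilon_s,\zeta_0$, conclude cloaking) is properly closed by your appeal to uniqueness: since the piecewise-harmonic ansatz with vanishing exterior $r^{-n}$ coefficient satisfies all interface, boundary and decay conditions under the stated parameter choices, well-posedness forces it to be \emph{the} solution, and \eqref{cond-cloaking-p} follows. Your route differs from the paper's in packaging rather than in substance: the paper represents $\varphi$ and $p$ via the single-layer potentials of Theorem \ref{well-posedness}, uses the explicit actions \eqref{S-ra}--\eqref{K-ra} of $\mathcal{S}_\Gamma$ and $\mathcal{K}^*_\Gamma$ on the mode $\e^{\i n\theta}$ to reduce the density equations to a $2\times 2$ linear system, solves for $(\phi_o,\phi_i)$ and $(\psi_i,\psi_e)$, and reads off the exterior $r^{-n}$ coefficient in \eqref{disk-varphi} and \eqref{annulus-p}, whereas you write the classical separation-of-variables ansatz $(a r^n+b r^{-n})\e^{\i n\theta}$ in each annular region and match coefficients directly. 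Both reduce to the same mode-by-mode algebra and the same vanishing condition on the exterior coefficient; your version is more elementary and self-contained in the computation (no NP-operator formulas needed), at the cost of having to invoke the layer-potential framework anyway for uniqueness, which the paper gets for free from its representation formula. One cosmetic slip: you refer to ``all four interface conditions'' for the electric part, but there are three (Neumann on $\partial B$, continuity and flux on $\partial D$) plus the decay condition at infinity; this does not affect the argument.
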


\begin{thm}\label{main-thm-ellipses}
Let the domains $B$, $D$ and $\Omega$ be confocal ellipses of elliptic radii $\xi_o$, $\xi_i$ and $\xi_e$, where $\xi_e>\xi_i>\xi_o$.
\begin{itemize}
    \item Let $H(x) = \cosh (n\xi) \cos (n\eta)$ and $P(x) = 12  \cosh (n\xi) \cos (n\eta)$ for $n\geq 1$. If
    \begin{equation}\label{ellipse-cloaking-zeta-x}
   \varepsilon_s = \tanh(n\xi_i)\coth(n(\xi_i-\xi_o))\varepsilon_m\quad\mbox{and}\quad \zeta_0=\frac{ \sinh (n\xi_i)}{\sinh (n\xi_e)\cosh (n(\xi_e -\xi_i))-\sinh (n\xi_i)},
    \end{equation}
   then the perfect electric and hydrodynamic cloaking occur simultaneously.
    \item Let $H(x) = \sinh (n\xi) \sin (n\eta)$ and $P(x) = 12  \sinh (n\xi) \sin (n\eta)$ for $n\geq 1$. If
    \begin{equation}\label{ellipse-cloaking-zeta-y}
    \varepsilon_s = \coth(n\xi_i)\coth(n(\xi_i-\xi_o))\varepsilon_m\quad\mbox{and}\quad \zeta_0=\frac{ \cosh (n\xi_i)}{\cosh (n\xi_e)\cosh (n(\xi_e -\xi_i))-\cosh (n\xi_i)},
    \end{equation}
  then the perfect electric and hydrodynamic cloaking occur simultaneously.
  \end{itemize}
\end{thm}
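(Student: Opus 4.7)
The plan is to reduce \eqref{electro-osmotic equation} to an elementary system of linear equations by working in elliptic coordinates $(\xi,\eta)$ adapted to the confocal family $\partial B, \partial D, \partial \Omega$. Let $c>0$ be the common focal half-distance, so that these three ellipses correspond to $\xi = \xi_o,\xi_i,\xi_e$ respectively. In such coordinates the Laplacian separates, harmonic modes factor as $\{\cosh(n\xi),\sinh(n\xi)\}\otimes\{\cos(n\eta),\sin(n\eta)\}$ together with the decaying exterior mode $e^{-n\xi}\cdot\{\cos(n\eta),\sin(n\eta)\}$, and the outward normal derivative on each ellipse $\xi = \mathrm{const}$ is $h_\xi^{-1}\partial_\xi$ with Lamé factor $h_\xi(\xi,\eta) = c\sqrt{\sinh^2\xi+\sin^2\eta}$; this common factor cancels identically on both sides of every interface and Neumann condition in \eqref{electro-osmotic equation}.

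For the first part of the theorem ($H = \cosh(n\xi)\cos(n\eta)$), I first enforce cloaking by declaring $\varphi \equiv H$ on $\mathbb{R}^2\setminus\overline D$ (which automatically realises the decay $\varphi = H + O(|x|^{-1})$) and I take the ansatz $\varphi = a\cosh(n\xi)\cos(n\eta) + b\sinh(n\xi)\cos(n\eta)$ in the shell $D\setminus\overline B$. The three scalar equations from insulation on $\partial B$, continuity on $\partial D$, and the transmission condition on $\partial D$ become linear in $a,b,\varepsilon_s$: the first yields $b = -a\tanh(n\xi_o)$, the second fixes $a$, and the third collapses, via the identity $\cosh(n\xi_i\mp n\xi_o) = \cosh(n\xi_i)\cosh(n\xi_o)\mp \sinh(n\xi_i)\sinh(n\xi_o)$, to $\varepsilon_s = \varepsilon_m\tanh(n\xi_i)\coth(n(\xi_i-\xi_o))$, i.e., the first formula in \eqref{ellipse-cloaking-zeta-x}. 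With $\varphi = H$ now known on $\mathbb{R}^2\setminus\overline D$, I repeat the scheme for $p$: take $p = \alpha\cosh(n\xi)\cos(n\eta) + \beta\sinh(n\xi)\cos(n\eta)$ on $\Omega\setminus\overline D$ and $p \equiv P$ outside $\overline\Omega$. The continuity and jump relations on $\partial\Omega$ form a $2\times 2$ system for $(\alpha,\beta)$ with unit determinant ($\cosh^2-\sinh^2 = 1$), giving $\alpha = 12[1+\zeta_0\sinh^2(n\xi_e)]$ and $\beta = -12\zeta_0\sinh(n\xi_e)\cosh(n\xi_e)$; inserting these into the Robin condition on $\partial D$ and applying $\cosh(n(\xi_e-\xi_i)) = \cosh(n\xi_e)\cosh(n\xi_i)-\sinh(n\xi_e)\sinh(n\xi_i)$ yields precisely the $\zeta_0$ in \eqref{ellipse-cloaking-zeta-x}. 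The second part is handled verbatim after substituting $\sin(n\eta)$ for $\cos(n\eta)$ throughout; the Neumann condition on $\partial B$ still yields $b' = -a'\tanh(n\xi_o)$, but the boundary trace of $H' = \sinh(n\xi)\sin(n\eta)$ is $\sinh(n\xi_i)$ rather than $\cosh(n\xi_i)$, swapping $\sinh\leftrightarrow\cosh$ at the outer positions in the final formulas and producing \eqref{ellipse-cloaking-zeta-y}. Finally, the explicit candidates are promoted to \emph{the} unique solutions by the well-posedness and layer-potential representation established in Section~\ref{sec:layer-potentials}, so that \eqref{cond-cloaking-p} is satisfied and the cloaking is perfect.

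The calculation is not conceptually deep; the principal nuisance is accounting for the Lamé factor in each normal derivative and tracking which side of each interface a trace is taken from. The structural observation that keeps the algebra tractable is the one-way coupling in \eqref{dimensionless-equations}: $\varphi$ solves a stand-alone Laplace transmission problem whose Neumann trace on $\partial D$ then drives the pressure problem as a prescribed source, so $\varepsilon_s$ and $\zeta_0$ never need to be solved for simultaneously. The only genuinely new ingredient compared with the annulus case of Theorem~\ref{main-thm-annulus} is the elliptic-coordinate separation, which is what transports the radial argument to the non-radial confocal geometry.
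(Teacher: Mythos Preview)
Your argument is correct, but it proceeds by a different route than the paper. The paper's proof stays inside the layer-potential framework of Section~\ref{sec:layer-potentials}: it writes $\varphi = H + \mathcal{S}_{\Gamma_o}[\phi_o] + \mathcal{S}_{\Gamma_i}[\phi_i]$ and $p = P + \mathcal{S}_{\Gamma_i}[\psi_i] + \mathcal{S}_{\Gamma_e}[\psi_e]$, uses the explicit action of $\mathcal{S}_\Gamma$ and $\mathcal{K}^*_\Gamma$ on the elliptic modes $\beta_n^{c,a},\beta_n^{s,a}$ (formulas \eqref{S-ellipse-cos}--\eqref{K-ellipse}) to solve a $2\times 2$ linear system for the densities, reconstructs the exterior field with its scattered $e^{-n\xi}$ tail, and then reads off the parameter values that annihilate that tail. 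You instead impose the cloaking identities $\varphi\equiv H$ on $\mathbb{R}^2\setminus\overline D$ and $p\equiv P$ on $\mathbb{R}^2\setminus\overline\Omega$ from the outset, take a two-term harmonic ansatz in each interior annular region, and solve directly for the coefficients and the material parameters; uniqueness from Theorem~\ref{well-posedness} then certifies that the constructed field is the actual solution. Your approach is more elementary---it needs only separation of variables and the addition formulas for $\cosh,\sinh$, and never invokes the Neumann--Poincar\'e spectrum---while the paper's approach has the advantage of being a specialisation of the same integral-equation machinery that is required anyway for the general-geometry optimisation in Section~\ref{subsec:optimal}. Both exploit the same structural point you highlight: the one-way coupling lets $\varepsilon_s$ and $\zeta_0$ be determined sequentially rather than jointly.
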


By using the optimization approach, we shall establish a sufficient condition for the simultaneous occurrence of approximate electric and hydrodynamic cloaking in the following theorem for the arbitrary geometry for $B$, $D$ and $\Omega$.
\begin{thm}\label{thm-estimation}
Let $p$ be the solution to (\ref{electro-osmotic equation}) with  $\varphi|_{+}=H$ on $\p D$, $\frac{\p p}{\p \nu}|_{+}=\frac{\p P}{\p \nu}$ on $\p \Omega$, and the optimization functional $\mathcal{G}(\varepsilon_{s,opt})$, $\mathcal{F}(\zeta_{0,opt})$ are defined by
(\ref{cost-functional-ep-cloaking}), (\ref{cost-functional-cloaking}) respectively.
If there exists an optimal permittivity and zeta potential pair $(\varepsilon_{s,opt},\zeta_{0,opt})$ such that $\mathcal{G}(\zeta_{0,opt})<\epsilon^2$ and $\mathcal{F}(\zeta_{0,opt})<\epsilon^2$  where $\epsilon\ll 1$, then the approximate electric and hydrodynamic cloaking occur simultaneously, that is, $|\varphi - H|<\epsilon$ in $\R^2 \setminus \overline{D}$  and $|p - P|<\epsilon$ in $\R^2 \setminus \overline{\Omega}$.
\end{thm}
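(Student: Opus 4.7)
The plan is to recognize that the discrepancies $w := \varphi - H$ and $q := p - P$ are harmonic in the appropriate exterior domains, and then to reduce the desired pointwise estimates to the smallness of the cost functionals via the exterior maximum principle. First I would observe that, since $H$ is harmonic on $\R^2$ and $\varepsilon \equiv \varepsilon_m$ on $\R^2 \setminus \overline{D}$, the function $w$ satisfies $\Delta w = 0$ there, while the decay $w = O(|x|^{-1})$ at infinity is inherited from the asymptotic condition in \eqref{electro-osmotic equation}. Analogously, because $\zeta_{mean} \equiv 0$ outside $\overline{\Omega}$, the pressure equation reduces to $\Delta p = 0$ in $\R^2 \setminus \overline{\Omega}$, so that $q$ is harmonic there with $q = O(|x|^{-1})$.

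Next I would invoke the classical maximum principle for harmonic functions in two–dimensional exterior domains with decay at infinity (made rigorous by a Kelvin transform centred at an interior point of $D$ or $\Omega$, which turns the exterior problem into an interior one with the point-at-infinity mapped to a regular value) to conclude
\begin{equation*}
\sup_{\R^2 \setminus \overline{D}} |w| \;\leq\; \sup_{\partial D} |w|, \qquad
\sup_{\R^2 \setminus \overline{\Omega}} |q| \;\leq\; \sup_{\partial \Omega} |q|.
\end{equation*}
The problem therefore reduces to controlling the boundary traces of $w$ on $\partial D$ and of $q$ on $\partial \Omega$. The cost functionals $\mathcal{G}$ and $\mathcal{F}$ defined in \eqref{cost-functional-ep-cloaking} and \eqref{cost-functional-cloaking} are designed precisely to penalise these boundary mismatches, so the hypotheses $\mathcal{G}(\varepsilon_{s,opt}) < \epsilon^2$ and $\mathcal{F}(\zeta_{0,opt}) < \epsilon^2$ translate, upon taking square roots, into boundary-trace bounds of order $\epsilon$. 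Combining these bounds with the two maximum-principle estimates above yields the claimed inequalities $|\varphi - H| < \epsilon$ in $\R^2 \setminus \overline{D}$ and $|p - P| < \epsilon$ in $\R^2 \setminus \overline{\Omega}$.

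The main obstacle I anticipate is the one-way coupling between the two physics: the transmission condition on $\partial \Omega$, $[\partial_\nu p]_{\pm} = 12 \zeta_0 \, \partial_\nu \varphi$, forces the pressure problem to depend on the only approximately cloaked electric field. When verifying that $\mathcal{F}$ genuinely controls $|q|$ on $\partial \Omega$, one must ensure that the residual error from the electric approximation does not leak into the pressure bound. This is handled by combining the single-layer representation established in Section \ref{sec:layer-potentials}—which gives a quantitative dependence of $q$ on the boundary data through the Neumann–Poincaré type operators—with the elliptic-regularity estimate of $\partial_\nu w$ near $\partial D$ already implied by the smallness of $\mathcal{G}$. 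Once this coupling estimate is in place, both approximate cloaking inequalities follow simultaneously, establishing the theorem for arbitrary geometries of $B$, $D$ and $\Omega$.
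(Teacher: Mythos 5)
Your opening observations are fine ($\varphi-H$ and $p-P$ are harmonic in the respective exterior domains), but the reduction to the maximum principle does not connect to what the cost functionals actually control, and this is a genuine gap. The maximum principle bounds $\sup_{\R^2\setminus\overline{D}}|\varphi-H|$ by the \emph{Dirichlet} trace $\sup_{\partial D}|\varphi-H|$ in the $L^\infty$ norm. But $\mathcal{G}$ is an $L^2(\partial D)$ norm of the \emph{Neumann} mismatch $\varepsilon_s\partial_\nu\varphi-\varepsilon_m\partial_\nu H$; under the theorem's standing hypothesis the Dirichlet trace satisfies $\varphi|_+=H$ on $\partial D$ exactly, so the quantity your maximum principle needs carries no information about $\mathcal{G}$ at all, and the entire content of the electric estimate --- that a small flux mismatch forces the exterior field to be close to $H$ --- is left unproved. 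You also never pass from the interior flux appearing in $\mathcal{G}$ to the exterior flux $\partial_\nu\varphi|_+-\partial_\nu H$; the paper does this through the transmission condition $\varepsilon_m\partial_\nu\varphi|_+=\varepsilon_s\partial_\nu\varphi|_-$ on $\partial D$. For the pressure the mismatch $\mathcal{F}$ does penalize the Dirichlet trace on $\partial\Omega$, but only in $L^2$, and an $L^2(\partial\Omega)$ bound does not give the $L^\infty(\partial\Omega)$ bound the maximum principle requires without an additional trace/regularity upgrade that you do not supply.

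The paper's route avoids both problems: it writes $H$ and $\varphi$ (resp.\ $P$ and $p$) by Green's representation formula in the exterior domain, so that the difference involves exactly two boundary integrals --- one against the Dirichlet mismatch and one against the Neumann mismatch. The hypothesis ($\varphi|_+=H$ on $\partial D$, resp.\ $\partial_\nu p|_+=\partial_\nu P$ on $\partial\Omega$) kills one integral, and Cauchy--Schwarz applied to the surviving integral converts the $L^2$ smallness supplied by $\mathcal{G}$ (resp.\ $\mathcal{F}$) directly into a pointwise bound. Your concluding paragraph about the one-way coupling, while thoughtful, is not where the difficulty lies: since $\mathcal{F}$ is evaluated on the actual pressure (which already incorporates the approximately cloaked electric field), no separate leakage estimate is needed. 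To repair your argument, replace the maximum-principle step by the Green's-formula/Cauchy--Schwarz argument, and insert the transmission-condition step that converts $\mathcal{G}(\varepsilon_{s,opt})<\epsilon^2$ into $\bigl\|\partial_\nu\varphi|_+-\partial_\nu H\bigr\|_{L^2(\partial D)}<\epsilon$.
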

\begin{rem}
It's worth noting that achieving the perfect electric and hydrodynamic cloaking simultaneously is typically challenging for general geometry. It is more strongly connected to the corresponding overdetermined boundary value problem (see Section \ref{subsec:optimal} for details).
\end{rem}

\section{Layer potentials formulation}\label{sec:layer-potentials}
In this section, we introduce some preliminary knowledge on boundary layer potentials, which shall be needed in establishing the representation formula of the solution of the governing equations.
The fundamental solution $G(x,y)$ to the Laplace operator in  $\mathbb{R}^2$ is given by
\begin{align*}
G(x,y)=\frac{1}{2\pi}\ln|x-y|.
\end{align*}
Let $\Gamma_o := \p B$, $\Gamma_i := \p D$ and $\Gamma_e := \p \Omega$. For $\Gamma = \Gamma_o$, $\Gamma = \Gamma_i$ or $\Gamma_e$, let us now introduce the single-layer potential by
\begin{align*}
\mathcal{S}_\Gamma[\vartheta](x) :=\int_{\Gamma}G(x,y)\vartheta(y)\d s(y), \quad  x\in \mathbb{R}^2,
\end{align*}
for some surface density $\vartheta\in L^2(\Gamma)$.
We also define a boundary integral operator $\mathcal{K}^*_{\Gamma}$ on $L^2(\Gamma)$ by
\begin{align*}
\mathcal{K}_\Gamma^*[\vartheta](x)=\int_{\Gamma}\frac{\partial G(x,y)}{\partial \nu (x)}\vartheta(y)\d s(y), \quad x \in \Gamma,
\end{align*}
which is sometimes called the Neumann-Poincar$\acute{e}$ (NP) operator on $\Gamma$.
The following jump formula relates the traces of the normal derivative of the single layer potential to the operator $\mathcal{K}^*_{\Gamma}$. We have
\begin{align*}
\frac{\partial\mathcal{S}_\Gamma[\vartheta]}{\partial \nu}\bigg|_{\pm}(x)&=\Big(\pm\frac{1}{2}I+\mathcal{K}^*_\Gamma\Big)[\vartheta](x), \ \ x\in \Gamma.
\end{align*}

Recall Kellogg's result in \cite{Kellogg1953} on the spectrums of  $\mathcal{K}^*_{\Gamma}$, we know that the
eigenvalues of $\mathcal{K}^*_{\Gamma}$ lie in the interval $(-1/2, 1/2]$.  Hence, we have the following lemma, which is useful to establish the representation formula of solution to (\ref{electro-osmotic equation}).
\begin{lem}\label{I+K-invertible}\cite{Ammari2007}
The operator $\frac{1}{2} I+\mathcal{K}^*_{\Gamma_i}: L_0^2(\Gamma_i)\rightarrow L_0^2(\Gamma_i)$ is invertible. Here $L_0^2 := \{f\in L^2(\Gamma_i); \int_{\Gamma_i} f \d s=0\}$.
\end{lem}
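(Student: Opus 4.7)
The plan is to apply the Fredholm alternative on $L^2_0(\Gamma_i)$ after verifying that $\tfrac{1}{2}I+\mathcal{K}^*_{\Gamma_i}$ leaves this closed subspace invariant and is injective there. Since $\Gamma_i$ is of class $C^{1,\alpha}$, the Neumann--Poincar\'e kernel $\partial_{\nu(x)}G(x,y)$ is weakly singular of order $|x-y|^{\alpha-1}$, so $\mathcal{K}^*_{\Gamma_i}$ is compact on $L^2(\Gamma_i)$; consequently $\tfrac{1}{2}I+\mathcal{K}^*_{\Gamma_i}$ is a compact perturbation of a multiple of the identity and hence Fredholm of index zero. To see that this operator maps $L^2_0(\Gamma_i)$ to itself, I would compute
\begin{equation*}
\int_{\Gamma_i}\mathcal{K}^*_{\Gamma_i}[\vartheta](x)\,\d s(x)=\int_{\Gamma_i}\vartheta(y)\left(\int_{\Gamma_i}\frac{\partial G(x,y)}{\partial \nu(x)}\,\d s(x)\right)\d s(y)
\end{equation*}
by Fubini, and identify the inner integral with $\mathcal{K}_{\Gamma_i}[1](y)\equiv -\tfrac{1}{2}$ via the classical jump formula applied to the double-layer potential of the constant density $1$ (whose exterior limit is $0$ and interior limit is $-1$). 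Thus $\int_{\Gamma_i}(\tfrac{1}{2}I+\mathcal{K}^*_{\Gamma_i})[\vartheta]\,\d s=0$ for every $\vartheta\in L^2(\Gamma_i)$, which gives the mapping property.

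For injectivity on $L^2_0(\Gamma_i)$, the shortest argument appeals to Kellogg's spectral result recalled immediately above the lemma: since the eigenvalues of $\mathcal{K}^*_{\Gamma_i}$ lie in the half-open interval $(-\tfrac{1}{2},\tfrac{1}{2}]$, the value $-\tfrac{1}{2}$ is not an eigenvalue, and hence $\tfrac{1}{2}I+\mathcal{K}^*_{\Gamma_i}$ is already injective on all of $L^2(\Gamma_i)$, a fortiori on $L^2_0(\Gamma_i)$. If a self-contained potential-theoretic derivation is preferred, I would set $u:=\mathcal{S}_{\Gamma_i}[\vartheta]$ for a hypothetical kernel element $\vartheta\in L^2_0(\Gamma_i)$: the jump relation yields $\partial_\nu u|_+=0$ on $\Gamma_i$, the mean-zero condition eliminates the logarithmic growth of $\mathcal{S}_{\Gamma_i}[\vartheta]$ at infinity so that $u(x)=O(|x|^{-1})$, uniqueness for the exterior Neumann problem forces $u\equiv 0$ in $\mathbb{R}^2\setminus\overline{D}$, continuity of the single-layer potential across $\Gamma_i$ transmits this to $u=0$ on $\partial D$, the interior Dirichlet problem then gives $u\equiv 0$ in $D$, and the jump $\vartheta=\partial_\nu u|_--\partial_\nu u|_+$ vanishes. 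Combining Fredholm index zero with injectivity produces the desired invertibility.

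The delicate point---and the reason the lemma is formulated on $L^2_0(\Gamma_i)$ rather than on the whole $L^2(\Gamma_i)$ when proceeding via the potential-theoretic route---lies in the logarithmic behaviour of the two-dimensional fundamental solution. Without the constraint $\int_{\Gamma_i}\vartheta\,\d s=0$, the single-layer potential $\mathcal{S}_{\Gamma_i}[\vartheta]$ carries a $\tfrac{1}{2\pi}\log|x|\int_{\Gamma_i}\vartheta\,\d s$ term at infinity that obstructs the exterior Neumann uniqueness step automatic in three dimensions. Restricting to the codimension-one subspace $L^2_0(\Gamma_i)$ is precisely the device that removes this obstruction and aligns the 2D argument with its standard 3D counterpart.
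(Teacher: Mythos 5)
Your proof fills in the details the paper leaves out (the paper offers no argument of its own: it cites Ammari--Kang and simply invokes Kellogg's spectral bound recalled just above the lemma), and your strategy---compactness of $\mathcal{K}^*_{\Gamma_i}$ on a $C^{1,\alpha}$ boundary, Fredholm index zero, invariance of $L^2_0$, and injectivity via Kellogg---is the standard expansion of exactly that sketch. The alternative potential-theoretic injectivity argument is also correct, including the key observation that the mean-zero constraint kills the $\log|x|$ growth of $\mathcal{S}_{\Gamma_i}[\vartheta]$ and thereby restores exterior Neumann uniqueness in two dimensions.

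There is, however, a sign error in your mapping-property computation. With the paper's fundamental solution $G(x,y)=\frac{1}{2\pi}\ln|x-y|$ (so $\Delta G=\delta$) and $\nu$ the outward normal, the double-layer potential of the constant density $1$ has interior limit $+1$ and exterior limit $0$, so $\mathcal{K}_{\Gamma_i}[1]\equiv +\tfrac{1}{2}$, not $-\tfrac{1}{2}$. Consequently
\begin{equation*}
\int_{\Gamma_i}\Bigl(\tfrac{1}{2}I+\mathcal{K}^*_{\Gamma_i}\Bigr)[\vartheta]\,\d s = \tfrac{1}{2}\int_{\Gamma_i}\vartheta\,\d s + \tfrac{1}{2}\int_{\Gamma_i}\vartheta\,\d s = \int_{\Gamma_i}\vartheta\,\d s,
\end{equation*}
which vanishes precisely when $\vartheta\in L^2_0(\Gamma_i)$, not for every $\vartheta\in L^2(\Gamma_i)$. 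Thus $\tfrac{1}{2}I+\mathcal{K}^*_{\Gamma_i}$ does preserve $L^2_0$---which is all the lemma needs---but your stronger assertion that it sends the whole of $L^2$ into $L^2_0$ is false; the operator with that property is $-\tfrac{1}{2}I+\mathcal{K}^*_{\Gamma_i}$, whose one-dimensional kernel is spanned by the equilibrium density. With the sign corrected, your Fredholm and injectivity steps go through unchanged. In fact, since $-\tfrac{1}{2}$ is excluded from the spectrum, $\tfrac{1}{2}I+\mathcal{K}^*_{\Gamma_i}$ is already invertible on all of $L^2(\Gamma_i)$, and the lemma is simply its restriction to the invariant subspace $L^2_0$.
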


Using Lemma \ref{I+K-invertible}, we now give the following representation theorem by the layer potential theory. The proof is similar to that of Theorem 3.2 in \cite{Liu2023a}, thus we omit its proof.
\begin{thm}\label{well-posedness}
Let $\varphi\in C^2(\mathbb{R}^2\setminus\overline{B})\bigcap C(\mathbb{R}^2\setminus B)$, $p\in C^2(\mathbb{R}^2\setminus\overline{D})\bigcap C(\mathbb{R}^2\setminus D)$ be the classical solution to (\ref{electro-osmotic equation}).
Then $\varphi$ can be represented as
\begin{equation}\label{sol-ep}
\varphi =
H(x) + \mathcal{S}_{\Gamma_o}[\phi_o](x)+ \mathcal{S}_{\Gamma_i}[\phi_i](x),\quad x\in\mathbb{R}^2\setminus\overline{B},
\end{equation}
where the density function pair $(\phi_o, \phi_i)\in L_0^2(\Gamma_o)\times L_0^2(\Gamma_i)$ satisfy
\begin{align*}
\begin{cases}
\ds \Big(\frac{1}{2} I+\mathcal{K}^*_{\Gamma_o}\Big)[\phi_o]
+\frac{\partial\mathcal{S}_{\Gamma_i}[\phi_i]}{\partial\nu_o}= -\frac{\p H}{\p \nu_o} &\quad  \mbox{on }\Gamma_o, \ms\\
\ds \frac{\partial\mathcal{S}_{\Gamma_o}[\phi_o]}{\partial\nu_i} + \big(\lambda I + \mathcal{K}^*_{\Gamma_i}\big)[\phi_i] = -\frac{\p H}{\p \nu_i} &\quad  \mbox{on }\Gamma_i, \
\end{cases}
\end{align*}
where $\lambda = \frac{\varepsilon_m+\varepsilon_s}{2(\varepsilon_m-\varepsilon_s)}$.

And $p$ can be represented using the single-layer potentials $S_{\Gamma_i}$ and $S_{\Gamma_e}$ as follows:
 \begin{equation}\label{sol-p}
p = P(x) +\mathcal{S}_{\Gamma_i}[\psi_i](x) + \mathcal{S}_{\Gamma_e}[\psi_e](x),\quad  x\in  \mathbb{R}^2\setminus\overline{D},
\end{equation}
where the pair $(\psi_i, \psi_e)\in L_0^2(\Gamma_i)\times L_0^2(\Gamma_e)$ satisfy
\begin{align}
\label{density-equation}
\begin{cases}
\ds \Big(\frac{1}{2} I+\mathcal{K}^*_{\Gamma_i}\Big)[\psi_i]
+\frac{\partial\mathcal{S}_{\Gamma_e}[\psi_e]}{\partial\nu_i}= -\frac{\p P}{\p \nu_i} - 12 \zeta_0\frac{\partial \varphi}{\partial \nu_i} &\quad  \mbox{on }\Gamma_i, \ms\\
\ds \psi_e =12 \zeta_0\frac{\partial \varphi}{\partial \nu_e} &\quad  \mbox{on }\Gamma_e. \
\end{cases}
\end{align}
Furthermore, there exists a constant $C=C(\zeta_0,D,\Omega)$ such that
\begin{align*}
\|\psi_i\|_{L^2(\Gamma_i)}+\|\psi_e\|_{L^2(\Gamma_e )}
\leq C \big(\| \nabla P \|_{L^2(\Gamma_i)}+\left\|\nabla H \right\|_{L^2(\Gamma_e)}\big).
\end{align*}
\end{thm}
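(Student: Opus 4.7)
The overall strategy is to substitute the proposed ansätze $\varphi = H + \mathcal{S}_{\Gamma_o}[\phi_o] + \mathcal{S}_{\Gamma_i}[\phi_i]$ and $p = P + \mathcal{S}_{\Gamma_i}[\psi_i] + \mathcal{S}_{\Gamma_e}[\psi_e]$ into the governing system (\ref{electro-osmotic equation}); exploit harmonicity of single-layer potentials away from their support to automatically satisfy the Laplace equations; impose the boundary and transmission conditions through the jump relations to obtain integral systems for the densities; and finally invoke Lemma~\ref{I+K-invertible} together with a compactness argument to extract the densities and read off the $L^2$ estimate. For $\varphi$, the prescribed asymptotic behavior $\varphi - H = O(|x|^{-1})$ forces $(\phi_o,\phi_i) \in L^2_0(\Gamma_o) \times L^2_0(\Gamma_i)$, since a single-layer density with non-zero total charge produces an unwanted $\log|x|$ term at infinity. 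The no-flux condition on $\Gamma_o$ together with the conormal transmission condition $\varepsilon_m \partial_\nu \varphi|_+ = \varepsilon_s \partial_\nu \varphi|_-$ on $\Gamma_i$, after application of the jump formula $\partial_\nu \mathcal{S}_\Gamma[\cdot]|_\pm = (\pm \frac{1}{2} I + \mathcal{K}_\Gamma^*)[\cdot]$, reproduce exactly the $2\times 2$ system displayed in the theorem, with the constant $\lambda = (\varepsilon_m + \varepsilon_s)/(2(\varepsilon_m - \varepsilon_s))$ arising after combining the $\pm\frac{1}{2}I$ contributions on the two sides of $\Gamma_i$.

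The pressure ansatz is handled in parallel. Continuity of $p$ across $\Gamma_e$ is automatic from the continuity of single-layer potentials, while the jump condition $\partial_\nu p|_+ - \partial_\nu p|_- = 12\zeta_0 \,\partial_\nu \varphi$ on $\Gamma_e$, combined with $\partial_\nu \mathcal{S}_{\Gamma_e}|_+ - \partial_\nu \mathcal{S}_{\Gamma_e}|_- = I$ and the smoothness of $\mathcal{S}_{\Gamma_i}[\psi_i]$ across $\Gamma_e$, collapses to the explicit relation $\psi_e = 12\zeta_0\,\partial_\nu \varphi|_{\Gamma_e}$. The slip-type no-penetration condition $\partial_\nu p|_+ = -12\zeta_0\, \partial_\nu \varphi$ on $\Gamma_i$ then yields, via the exterior jump formula for $\mathcal{S}_{\Gamma_i}[\psi_i]$ and the smoothness of $\mathcal{S}_{\Gamma_e}[\psi_e]$ across $\Gamma_i$, precisely the first equation of (\ref{density-equation}). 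As before, the far-field decay forces both $\psi_i$ and $\psi_e$ into their respective zero-mean subspaces.

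Solvability of the two systems is the content of the well-posedness claim. Since $\Gamma_o,\Gamma_i,\Gamma_e$ are pairwise disjoint, the cross operators $\partial_{\nu_o}\mathcal{S}_{\Gamma_i}$, $\partial_{\nu_i}\mathcal{S}_{\Gamma_o}$ and $\partial_{\nu_i}\mathcal{S}_{\Gamma_e}$ have smooth integral kernels and are therefore compact on $L^2$. The diagonal block of the $\varphi$ system is $(\frac{1}{2}I+\mathcal{K}^*_{\Gamma_o})\oplus(\lambda I+\mathcal{K}^*_{\Gamma_i})$: the first factor is invertible on $L^2_0(\Gamma_o)$ by Lemma~\ref{I+K-invertible}, while for the second factor an elementary computation gives $|\lambda|>1/2$ whenever $\varepsilon_s>0$ and $\varepsilon_s\neq\varepsilon_m$, placing $-\lambda$ outside the spectrum of $\mathcal{K}^*_{\Gamma_i}$ by Kellogg's bound. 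The full coupled $\varphi$ system is thus a compact perturbation of an invertible operator, and uniqueness inherited from the underlying elliptic transmission problem upgrades Fredholm-alternative solvability to invertibility. For the $p$ system, $\psi_e$ is already explicit; substituting it into the $\Gamma_i$ equation leaves $(\frac{1}{2}I + \mathcal{K}^*_{\Gamma_i})[\psi_i] = -\partial P/\partial \nu_i - 12\zeta_0 \,\partial \varphi/\partial \nu_i - \partial_{\nu_i}\mathcal{S}_{\Gamma_e}[\psi_e]$, whose right-hand side lies in $L^2_0(\Gamma_i)$ by divergence-theorem computations on $D$ and on $\Omega\setminus\overline{D}$, so Lemma~\ref{I+K-invertible} uniquely determines $\psi_i$.

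The quantitative estimate follows by chaining the bounded inverses. From the explicit formula $\|\psi_e\|_{L^2(\Gamma_e)} \leq 12|\zeta_0|\,\|\partial_\nu \varphi\|_{L^2(\Gamma_e)}$, and the $\varphi$ representation combined with elliptic regularity gives $\|\partial_\nu \varphi\|_{L^2(\Gamma_e)} \leq C(B,D,\Omega)\|\nabla H\|_{L^2(\Gamma_e)}$. Applying the bounded inverse of $\frac{1}{2}I + \mathcal{K}^*_{\Gamma_i}$ on $L^2_0(\Gamma_i)$ to the reduced equation for $\psi_i$ then yields $\|\psi_i\|_{L^2(\Gamma_i)} \leq C(\zeta_0,D,\Omega)\bigl(\|\nabla P\|_{L^2(\Gamma_i)} + \|\nabla H\|_{L^2(\Gamma_e)}\bigr)$, producing the stated bound. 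The main obstacle I expect is the bookkeeping required to verify that each forcing term lies in the zero-mean subspace $L^2_0$ so that Lemma~\ref{I+K-invertible} applies, together with the spectral check that $\lambda$ sits outside the spectrum of $-\mathcal{K}^*_{\Gamma_i}$ under the standing hypotheses on $\varepsilon_s$; once these subspace-compatibility issues are settled, the remainder is the routine chaining of Fredholm inversions, exactly as in the proof of Theorem~3.2 of \cite{Liu2023a}.
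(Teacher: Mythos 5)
The paper does not actually present a proof of Theorem~\ref{well-posedness}: it explicitly defers to Theorem~3.2 of \cite{Liu2023a}, which uses precisely the layer-potential strategy you reconstruct (harmonicity of single-layer potentials, jump relations to reduce the boundary/transmission conditions to a system for the densities, invertibility of $\frac12 I + \mathcal{K}^*$ on $L_0^2$ and a Fredholm argument for the coupled blocks, then chaining bounded inverses for the estimate). Your proposal is correct and is essentially the same approach; one small imprecision worth noting is that the far-field decay by itself only forces $\int_{\Gamma_o}\phi_o+\int_{\Gamma_i}\phi_i=0$, not each integral separately --- the individual zero-mean conditions are most cleanly obtained by integrating the two integral equations over $\Gamma_o$ and $\Gamma_i$ (using $\int_\Gamma(\frac12 I+\mathcal{K}^*_\Gamma)[\phi]=\int_\Gamma\phi$ and $\int_\Gamma \partial_\nu\mathcal{S}_{\Gamma'}[\cdot]=0$ for $\Gamma'$ inside $\Gamma$), or simply by seeking the densities in $L_0^2\times L_0^2$ from the outset and verifying solvability there.
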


Based on Theorem \ref{well-posedness}, we obtain the representation formula of solution of the electro-osmosis model, which will be used to deduce the simultaneous electric and hydrodynamic cloaking conditions in the following sections.

\section{Simultaneous electric and hydrodynamic cloaking}\label{sec:simultaneous-cloaking}
This section focuses on  the proofs of Theorem \ref{main-thm-annulus} and \ref{main-thm-ellipses}, which provide the conditions for both electric and hydrodynamic cloaking to occur simultaneously.
We first examine the simultaneous electric and hydrodynamic cloaking by electro-osmosis when the boundaries $\p B$, $\p D$ and $\p \Omega$ are concentric circles and confocal ellipses in Subsections \ref{subsec:cloaking-annulus} and \ref{subsec:cloaking-ellipses}, respectively.  For these two special cases, we derive the explicit expression of the solution to \eqref{electro-osmotic equation}.  Subsection \ref{subsec:optimal} then uses an optimization method to explore simultaneous electric and hydrodynamic cloaking for a general shape.

\subsection{ Perfect electric and hydrodynamic cloaking on the annulus}\label{subsec:cloaking-annulus}
Throughout this subsection, we set $B :=\{|x| < r_o\}$,
$D :=\{|x| < r_i\}$ and $\Omega :=\{|x| < r_e\}$, where $r_e > r_i>r_o$.
For each integer $n$ and $a=o, i, e$, one can easily see in \cite{Ammari2013} that
\begin{align}\label{S-ra}
\mathcal{S}_{\Gamma}[\e^{\i n\theta}](x) = \begin{cases}
\ds -\frac{r_a}{2n}\Big( \frac{r}{r_a}\Big)^n \e^{\i n\theta},\quad & |x|=r  < r_a,\ms\\
\ds -\frac{r_a}{2n}\Big( \frac{r_a}{r}\Big)^n \e^{\i n\theta},\quad & |x|=r > r_a,
\end{cases}
\end{align}
and
\begin{equation}\label{K-ra}
\mathcal{K}_{\Gamma}^*[\e^{\i n\theta}](x)=0, \quad \forall \  n\neq 0.
\end{equation}

To begin with, we show the proof of Theorem \ref{main-thm-annulus}.
\renewcommand{\proofname}{Proof of  Theorem \ref{main-thm-annulus}}
\begin{proof}
Let $H(x) = r^n\e^{\i n\theta}$ for $n\geq 1$. From Theorem \ref{well-posedness} , we have
\begin{equation*}
\varphi=
H(x) + \mathcal{S}_{\Gamma_o}[\phi_o](x)+ \mathcal{S}_{\Gamma_i}[\phi_i](x),\quad x\in\mathbb{R}^2\setminus\overline{B},
\end{equation*}
where
\begin{align}
\label{varphi-density-equation-annulus}
\begin{cases}
\ds \Big(\frac{1}{2} I+\mathcal{K}^*_{\Gamma_o}\Big)[\phi_o]
+\frac{\partial\mathcal{S}_{\Gamma_i}[\phi_i]}{\partial\nu_o}= -\frac{\p H}{\p \nu_o} &\quad  \mbox{on }\Gamma_o, \ms\\
\ds \frac{\partial\mathcal{S}_{\Gamma_o}[\phi_o]}{\partial\nu_i} + \big(\lambda I + \mathcal{K}^*_{\Gamma_i}\big)[\phi_i] = -\frac{\p H}{\p \nu_i} &\quad  \mbox{on }\Gamma_i. \
\end{cases}
\end{align}
If $(\phi_o, \phi_i)$ is given by
\begin{align*}
\left[
  \begin{array}{ccc}
    \phi_{o} \ms \\
    \phi_{i}
  \end{array}
\right]
=
\left[
  \begin{array}{cc}
    \phi_{o}^n \ms \\
    \phi_{i}^n
  \end{array}
\right]
\e^{\i n\theta},
\end{align*}
then the integral equations \eqref{varphi-density-equation-annulus} are equivalent to
\begin{align*}
\begin{cases}
 \ds \frac{1}{2}\phi_o^n - \frac{1}{2}\big(\frac{r_o}{r_i}\big)^{n-1}\phi_i^{n} = -n r_o^{n-1}, \ms \\
 \ds \lambda\phi_i^n + \frac{1}{2}\big(\frac{r_o}{r_i}\big)^{n}\phi_o^n = -n r_i^{n-1},
\end{cases}
\end{align*}
By the  straightforward calculations using (\ref{K-ra}), we can obtain
\begin{align}\label{density-annulus-ep}
\begin{cases}
\ds \phi_o = - \frac{2n(1+2\lambda)r_i^{2n}r_o^{n-1}}{2\lambda r_i^{2n}+r_o^{2n}}\e^{\i n\theta},\ms \\
\ds \phi_i = \frac{2nr_i^{n-1}(r_o^{2n}-r_i^{2n})}{2\lambda r_i^{2n}+r_o^{2n}}\e^{\i n\theta}.
\end{cases}
\end{align}
Substituting (\ref{density-annulus-ep}) into (\ref{sol-ep}) and using (\ref{S-ra}), we can have the solution to \eqref{electro-osmotic equation}
\begin{align}\label{disk-varphi}
\varphi =
\ds r^n \e^{\i n\theta} + \frac{(2\lambda r_o^{2n}+r_i^{2n})r_i^{2n}}{2\lambda r_i^{2n}+r_o^{2n}}r^{-n}\e^{\i n\theta},&\quad r>r_i.
\end{align}

The electric cloaking condition $\varphi= H$ in $\R^2\setminus\overline{D}$ implies
\begin{align*}
2\lambda r_o^{2n}+r_i^{2n}=0,
\end{align*}
furthermore, we have
\begin{align*}
\varepsilon_s = \frac{r_i^{2n}+r_o^{2n}}{r_i^{2n}-r_o^{2n}}\varepsilon_m.
\end{align*}

Let $P(x)=12 r^n\e^{\i n\theta}$ for $n\geq 1$. By (\ref{density-equation}),  (\ref{S-ra}), (\ref{K-ra}), and (\ref{disk-varphi}), if
\begin{align*}
\left[
  \begin{array}{ccc}
    \psi_{i} \ms \\
    \psi_{e}
  \end{array}
\right]
=
\left[
  \begin{array}{cc}
    \psi_{i}^n \ms \\
    \psi_{e}^n
  \end{array}
\right]
\e^{\i n\theta},
\end{align*}
then we derive
\begin{align}
\label{density-annulus}
\begin{cases}
\ds \psi_i = -12n r_i^{n-1}(\zeta_0+2)\e^{\i n\theta},  \ms \\
\ds \psi_e = 12 n \zeta_0 r_e^{n-1} \e^{\i n\theta}.
\end{cases}
\end{align}
 Substituting (\ref{density-annulus}) into (\ref{sol-p}) and using (\ref{S-ra}), we can find that the solution to \eqref{electro-osmotic equation} is
 \begin{align}\label{annulus-p}
 p=
\ds 12 r^n\e^{\i n\theta} - 6\Big((r_e^{2 n} - r_i^{2 n})\zeta_0 - 2r_i^{2n}\Big)\frac{1}{r^n}\e^{\i n\theta},\quad r>r_e.
 \end{align}

From the equation (\ref{annulus-p}), it follows that the outer flow and pressure satisfy the hydrodynamic cloaking conditions (\ref{cond-cloaking-p}) and (\ref{cond-cloaking}) provided
\begin{align*}
\zeta_0=\frac{2r_i^{2n}}{r_e^{2n}-r_i^{2n}}.
\end{align*}

The proof is complete.
\end{proof}

\subsection{Perfect electric and hydrodynamic cloaking on the confocal ellipses}\label{subsec:cloaking-ellipses}
In this subsection, we investigate the perfect electric and hydrodynamic cloaking to occur simultaneously on the confocal ellipses. For that purpose, we introduce elliptic coordinates.
The elliptic coordinates $(\xi, \eta)$ for $x=(x_1,x_2)$ in Cartesian coordinates are defined by
\begin{align*}
  x_1=l \cosh \xi \cdot \cos \eta, \quad x_2=l \sinh \xi \cdot \sin \eta,\quad \xi \geq 0, \quad 0\leq \eta \leq 2\pi,
\end{align*}
where $2l$ is the focal distance.
Suppose that $\p B=\Gamma_o$, $\p D=\Gamma_i$ and $\p \Omega=\Gamma_e$ are given by
$$
\Gamma_o = \{ (\xi, \eta) : \xi = \xi_o\}, \quad \Gamma_i = \{ (\xi, \eta) : \xi = \xi_i\} \quad \mbox{and} \quad \Gamma_e = \{ (\xi, \eta) : \xi = \xi_e\},
$$
where the number $\xi_o$, $\xi_i$ and $\xi_e$ are called the elliptic radius $\Gamma_o$, $\Gamma_i$ and $\Gamma_e$, respectively.

Let $\Gamma = \{ (\xi, \eta) : \xi = \xi_a\}$ for $a=o, i, e$. One can see easily that the length element $\d s$ and the outward normal derivative $\frac{\p}{\p \nu}$ on $\Gamma$ are given in terms of the elliptic coordinates by
\begin{equation*}
  \d s = \gamma \d \eta \quad \mbox{and} \quad \frac{\p}{\p \nu} = \gamma^{-1}\frac{\p}{\p \xi},
\end{equation*}
where
\begin{equation*}
\gamma =  \gamma (\xi_a, \eta) = l \sqrt{\sinh^2\xi_a+\sin^2\eta}.
\end{equation*}

To do that it is convenient to use the following notation: for $a=o, i, e$ and $n=1,2,\dots$,
\begin{align*}
  \beta_n^{c,a} := \gamma (\xi_a, \eta)^{-1} \cos (n\eta) \quad \mbox{and} \quad \beta_n^{s,a} := \gamma (\xi_a, \eta)^{-1} \sin (n\eta).
\end{align*}
For a nonnegative integer $n$ and $a=i, e$, it is proven in \cite{Chung2014, Ando2016} that
\begin{align}\label{S-ellipse-cos}
\mathcal{S}_{\Gamma}[ \beta_n^{c,a}](x) = \begin{cases}
\ds -\frac{\cosh (n\xi)}{n\e^{n\xi_a}}\cos (n\eta),\quad & \xi  < \xi_a,\ms\\
\ds -\frac{\cosh (n\xi_a)}{n\e^{n\xi}}\cos (n\eta),\quad & \xi  > \xi_a,
\end{cases}
\end{align}
and
\begin{align*}
\mathcal{S}_{\Gamma}[\beta_n^{s,a}](x) = \begin{cases}
\ds -\frac{\sinh (n\xi)}{n\e^{n\xi_a}}\sin (n\eta),\quad & \xi  < \xi_a,\ms\\
\ds -\frac{\sinh (n\xi_a)}{n\e^{n\xi}}\sin (n\eta),\quad & \xi  > \xi_a.
\end{cases}
\end{align*}
Moreover, we also have
\begin{align}\label{K-ellipse}
\mathcal{K}^*_\Gamma [\beta_n^{c,a}] = \frac{1}{2 \e^{2n\xi_a}}\beta_n^{c,a}\quad \mbox{and} \quad \mathcal{K}^*_\Gamma [\beta_n^{s,a}] = -\frac{1}{2 \e^{2n\xi_a}}\beta_n^{s,a}.
\end{align}

We are ready to present the proof of Theorem \ref{main-thm-ellipses}.
\renewcommand{\proofname}{Proof of  Theorem \ref{main-thm-ellipses}}
\begin{proof}
Let $H(x)=\cosh (n\xi) \cos (n\eta)$ for $n\geq 1$. From the Theorem \ref{well-posedness} in Section \ref{sec:layer-potentials} and (\ref{K-ellipse}), if $(\phi_o, \phi_i)$ is given by
\begin{align*}
\left[
  \begin{array}{ccc}
    \phi_{i} \ms \\
    \phi_{e}
  \end{array}
\right]
=
\left[
  \begin{array}{cc}
    \phi_{o}^n \beta_n^{c,o} \ms \\
    \phi_{i}^n \beta_n^{c,i}
  \end{array}
\right],
\end{align*}
then the integral equations are equivalent to
\begin{align}\label{ep-density-ellipse}
\begin{cases}
 \ds \frac{\cosh(n\xi_o)}{\e^{n\xi_o}}\psi_i^n - \frac{\sinh(n\xi_o)}{\e^{n\xi_i}}\psi_e^{n} = -n \sinh(n\xi_o), \ms \\
 \ds \frac{\cosh(n\xi_o)}{\e^{n\xi_i}}\psi_i^n +\Big(\lambda+\frac{1}{2\e^{2n\xi_i}}\Big)\psi_e^n = -n \sinh(n\xi_i),
\end{cases}
\end{align}
It is readily seen that the solution to (\ref{ep-density-ellipse}) is given by
\begin{align}\label{density-ellipse-ep}
\begin{cases}
\ds \phi_o = - \frac{n(1+2\lambda)\e^{2n\xi_i}\e^{n\xi_o}\tanh(n\xi_o)}{2\lambda \e^{2n\xi_i}+\e^{2n\xi_o}}\beta_n^{c,o},\ms \\
\ds \phi_i =\frac{n\e^{n\xi_i}(\e^{2n\xi_o}-\e^{2n\xi_i})}{2\lambda \e^{2n\xi_i}+\e^{2n\xi_o}}\beta_n^{c,i}.
\end{cases}
\end{align}
 Substituting (\ref{density-ellipse-ep}) into (\ref{sol-ep}) and using (\ref{S-ellipse-cos}), we can have the solution to \eqref{electro-osmotic equation}
\begin{align}\label{ellipse-varphi}
\varphi =
\ds \Big(\cosh (n\xi) + \frac{(1+2\lambda)\sinh(n\xi_o)\e^{n(\xi_o+\xi_i)}-\cosh(n\xi_i)(\e^{2n\xi_o}-\e^{2n\xi_i})}{2\lambda\e^{2n\xi_i}+\e^{2n\xi_o}} \;\frac{\e^{n\xi_i}}{\e^{n\xi}}\Big)\cos (n\eta).
\end{align}
The cloaking condition $\varphi= H$ in $\R^2\setminus\overline{D}$ implies
\begin{align*}
(1+2\lambda)\sinh(n\xi_o)\e^{n(\xi_o+\xi_i)}-\cosh(n\xi_i)(\e^{2n\xi_o}-\e^{2n\xi_i})=0,
\end{align*}
furthermore, we have
\begin{align*}
  \varepsilon_s = \tanh(n\xi_i)\coth(n(\xi_i-\xi_o)).
\end{align*}

Let $P(x)=12 \cosh (n\xi) \cos (n\eta)$ for $n\geq 1$. From (\ref{density-equation}) and (\ref{ellipse-varphi}) we first have
\begin{equation}\label{ellipse-phi-e}
     \psi_{e} = 12 n \zeta_0\sinh (n\xi_e) \beta_n^{c,e}.
\end{equation}
If $ \psi_{i} =  \psi_{i}^n \beta_n^{c,i}$, then the first equation in (\ref{density-equation}) is equivalent to
\begin{equation*}
  \Big(\frac{1}{2} + \frac{1}{2 \e^{2n\xi_i}}\Big) \psi_i = 12 n\Big(\sinh (n\xi_e)\sinh (n\xi_i)\e^{-n\xi_e}\zeta_0- \sinh (n\xi_i)-\sinh(n\xi_i)\zeta_0 \Big)\beta_n^{c,i}.
\end{equation*}
By the simple calculation, we can obtain
\begin{equation}\label{ellipse-phi-i}
\psi_i = 12 n \e^{n\xi_i} \Big(\big(\sinh (n\xi_e)\e^{-n\xi_e}-1\big) \tanh (n\xi_i) \;\zeta_0- \tanh (n\xi_i) \Big)\beta_n^{c,i}.
\end{equation}
Substituting (\ref{ellipse-phi-e}) and (\ref{ellipse-phi-i}) into (\ref{sol-ep}) and using (\ref{S-ellipse-cos}), we can obtain the solution to \eqref{electro-osmotic equation}
{\small
 \begin{align}\label{ellipse-p-dir-x}
 p=
\ds \Big( 12 \cosh (n\xi) - 12\Big(\big(\sinh (n\xi_e) \cosh (n(\xi_e -\xi_i))- \sinh(n\xi_i) \big)\zeta_0  - \sinh (n\xi_i) \Big)\frac{\e^{n\xi_i}}{\e^{n\xi}}\Big)\cos (n\eta),\ &\xi>\xi_e.
 \end{align}}
 If $H(x) = \sinh (n\xi) \sin (n\eta)$ and $P = 12 \sinh (n\xi) \sin (n\eta)$ for $n\geq 1$, in a similar way we can find the solution to \eqref{electro-osmotic equation}
 \begin{equation}\label{ellipse-ep-dir-y}
  \varphi =\Big(\sinh (n\xi) + \frac{(1+2\lambda)\cosh(n\xi_o)\e^{n(\xi_o+\xi_i)}-\sinh(n\xi_i)(\e^{2n\xi_o}-\e^{2n\xi_i})}{2\lambda\e^{2n\xi_i}+\e^{2n\xi_o}} \;\frac{\e^{n\xi_i}}{\e^{n\xi}}\Big)\cos (n\eta),\quad \xi>\xi_i,
\end{equation}
 and the solution to \eqref{electro-osmotic equation}
 \begin{align}\label{ellipse-p-dir-y}
 p=
 \Big( 12 \sinh (n\xi) - 12\Big(\big(\cosh (n\xi_e) \cosh(n(\xi_e -\xi_i))- \cosh (n\xi_i)\big)\zeta_0  - \cosh (n\xi_i) \Big)\frac{\e^{n\xi_i}}{\e^{n\xi}}\Big)\sin (n\eta),\quad &\xi>\xi_e.
\end{align}
The cloaking conditions immediately follow from the equations \eqref{ellipse-p-dir-x}, (\ref{ellipse-ep-dir-y}) and (\ref{ellipse-p-dir-y}).

The proof is complete.
\end{proof}

\subsection{Approximate electric and hydrodynamic cloaking using optimization method}\label{subsec:optimal}
In this subsection, we propose a general framework for simultaneous electric and hydrodynamic cloaking. Based on Definition \ref{def-cloaking} and equation (\ref{cond-cloaking-p}), we only need to solve the following overdetermined boundary value problem with $\{B, D, \Omega; \varepsilon_s, \zeta_0\}$ to guarantee the simultaneous occurrence of the electric and hydrodynamic cloaking
\begin{align}\label{ep-cloaking}
\begin{cases}
\ds \Delta \varphi = 0  & \mbox{in } D\setminus\overline{B}, \ms \\
\ds \frac{\partial \varphi}{\partial \nu} = 0  & \mbox{on } \partial B, \ms \\
\ds  \varphi = H  & \mbox{on } \partial D,\ms \\
\ds  \varepsilon_s \frac{\partial \varphi}{\partial \nu} = \varepsilon_m\frac{\partial H}{\partial \nu}  & \mbox{on } \partial D,
\end{cases}
\end{align}
and $p$ satisfies:
\begin{align}\label{press-cloaking}
\begin{cases}
\ds \Delta p= 0 & \mbox{in }  \Omega\setminus\overline{D}, \ms\\
\ds \frac{\partial p}{\partial \nu} = -12 \zeta_0\frac{\partial \varphi}{\partial \nu}  & \mbox{on } \partial D, \ms\\
\ds p=P & \mbox{on }  \partial \Omega,\ms\\
\ds \frac{\partial P}{\partial \nu} - \frac{\partial p}{\partial \nu}  = 12 \zeta_0\frac{\partial \varphi}{\partial \nu} & \mbox{on } \partial \Omega.
\end{cases}
\end{align}

By using explicit expressions in Sections \ref{subsec:cloaking-annulus} and \ref{subsec:cloaking-ellipses}, we know that perfect electric and hydrodynamic cloaking occur simultaneously in two special instances. These cloaking structures $\{B,D,\Omega; \varepsilon_s,\zeta_0\}$ can be readily verified to satisfy equations (\ref{ep-cloaking}) and (\ref{press-cloaking}), respectively.
However, since the analytical solutions for the general shape do not exist, we can not prove the existence of the perfect cloaking structures $\{B,D,\Omega; \varepsilon_s,\zeta_0\}$ for the arbitrary shape. Specifically, there is no solution for the overdetermined problems (\ref{ep-cloaking}) and (\ref{press-cloaking}) with general domain.  Fortunately, the numerical experiments demonstrate that for the general shape, there is approximate electric and hydrodynamic cloaking.

To find the approximate electric cloaking structure $\{B, D;\varepsilon_s\}$ for general shape, we need to solve the following interior mixed boundary value problem
\begin{align}\label{cloaking-varphi-Dirichlet}
\begin{cases}
\ds \Delta \varphi = 0  & \mbox{in } D\setminus\overline{B}, \ms \\
\ds \frac{\partial \varphi}{\partial \nu} = 0  & \mbox{on } \partial B, \ms \\
\ds  \varphi = H  & \mbox{on } \partial D.
\end{cases}
\end{align}
Then we can obtain $\varepsilon_s$ satisfying cloaking condition (\ref{cond-cloaking-p}) by solving the following equation
\begin{equation*}
\ds \varepsilon_s \frac{\partial \varphi}{\partial \nu} - \varepsilon_m\frac{\partial H}{\partial \nu}=0  \quad \mbox{on }  \partial D.
\end{equation*}

In order to solve equation (\ref{cloaking-varphi-Dirichlet}), one can use the standard Nystr\"{o}m method or finite element method. It's worth noting that the solution to (\ref{cloaking-varphi-Dirichlet}) does not exist for the general shape. Hence we need to choose optimal $\varepsilon_s$ by optimization method.
Let the cost functional be
\begin{equation}\label{cost-functional-ep-cloaking}
  \mathcal{G}(\varepsilon_s)=\Big\|\ds  \varepsilon_s \frac{\partial \varphi}{\partial \nu} - \varepsilon_m\frac{\partial H}{\partial \nu}\Big\|^2_{L^2(\partial D)}, \quad \varepsilon_s \in [a_0, b_0],
\end{equation}
then the optimal permittivity is defined by
\begin{equation}\label{varepsilon-s-opt-cloaking}
  \varepsilon_{s,opt} :=\mathop{\arg\min}_{\varepsilon_s\in [a_0, b_0]} \,\mathcal{G}(\varepsilon_s).
\end{equation}

After the approximate electric cloaking occurs, in order to find the approximate hydrodynamic cloaking structure $\{D,\Omega;\zeta_0\}$ for general shape, we need to solve the following interior Neumann boundary value problem
\begin{align}\label{p-cloaking-Dirichlet}
\begin{cases}
\ds \Delta p= 0 & \mbox{in }  \Omega\setminus\overline{D}, \ms\\
\ds \frac{\partial p}{\partial \nu} = -12 \zeta_0\frac{\partial \varphi}{\partial \nu}  & \mbox{on } \partial D, \ms\\
\ds \frac{\partial p}{\partial \nu} = \frac{\partial P}{\partial \nu} - 12 \zeta_0\frac{\partial \varphi}{\partial \nu} &\mbox{on } \partial \Omega.
\end{cases}
\end{align}
Then we can obtain $\zeta_0$ satisfying cloaking condition (\ref{cond-cloaking-p}) by solving the following equation
\begin{equation*}
\ds p(\zeta_0) =P  \quad \mbox{on }  \partial \Omega.
\end{equation*}

Similarly, let the cost functional be
\begin{equation}\label{cost-functional-cloaking}
  \mathcal{F}(\zeta_0)=\left\|p(\zeta_0)-P\right\|^2_{L^2(\partial \Omega)}, \quad \zeta_0\in [c_0, d_0]\subset\R,
\end{equation}
then the optimal zeta potential is defined by
\begin{equation}\label{zeta-opt-cloaking}
  \zeta_{0,opt} :=\mathop{\arg\min}_{\zeta_0\in [c_0, d_0]} \,\mathcal{F}(\zeta_0).
\end{equation}
Namely, to design simultaneously the approximate electric and hydrodynamic cloaking, we can solve the PDE-constrained optimization problem \eqref{cloaking-varphi-Dirichlet}, \eqref{varepsilon-s-opt-cloaking}, (\ref{p-cloaking-Dirichlet}) and (\ref{zeta-opt-cloaking}).

The existence, uniqueness and stability of minimizer for the constrained optimization problem $\eqref{cost-functional-ep-cloaking}$ and $\eqref{cost-functional-cloaking}$ are given in the following theorems. Some proofs are similar, but for completeness, we still show more details on them.
\begin{thm}
There exists a unique optimal permittivity $\varepsilon_{s,opt} \in [a_0, b_0]$, which minimizes the cost functional $\mathcal{G}(\varepsilon_s)$ with PDE-constraint (\ref{cloaking-varphi-Dirichlet}) over all $\varepsilon_s\in [a_0, b_0]$.
\end{thm}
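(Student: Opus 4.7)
The plan is to decouple the PDE step from the scalar optimization step. First I would invoke standard elliptic theory for the mixed Dirichlet--Neumann problem \eqref{cloaking-varphi-Dirichlet}: since $\partial B$ and $\partial D$ are disjoint $C^{1,\alpha}$ curves, a Lax--Milgram argument on $H^{1}(D\setminus\overline{B})$ (after a standard lift of the Dirichlet datum $H$) produces a unique weak solution, and Schauder estimates promote it to a classical one. The crucial observation is that $\varphi$ does not depend on $\varepsilon_s$: the constraint PDE involves only $B$, $D$, and the datum $H$. Consequently the cost functional admits the explicit form
\begin{equation*}
  \mathcal{G}(\varepsilon_s) \,=\, \Big\|\tfrac{\partial\varphi}{\partial\nu}\Big\|_{L^{2}(\partial D)}^{2}\,\varepsilon_s^{2} \,-\, 2\varepsilon_m\Big\langle \tfrac{\partial\varphi}{\partial\nu},\,\tfrac{\partial H}{\partial\nu}\Big\rangle_{L^{2}(\partial D)}\,\varepsilon_s \,+\, \varepsilon_m^{2}\Big\|\tfrac{\partial H}{\partial\nu}\Big\|_{L^{2}(\partial D)}^{2},
\end{equation*}
which is a real quadratic function of the scalar $\varepsilon_s$.

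Existence of a minimizer is then immediate: $\mathcal{G}$ is continuous on the compact interval $[a_0,b_0]$ and hence attains its infimum. For uniqueness I would prove strict convexity, which reduces to showing that the leading coefficient $\|\partial\varphi/\partial\nu\|_{L^{2}(\partial D)}^{2}$ is strictly positive. If it vanished, then $\partial\varphi/\partial\nu \equiv 0$ on $\partial D$; combined with the prescribed Dirichlet datum $\varphi = H$ on $\partial D$, this yields full Cauchy data for $\varphi$ on $\partial D$, so Holmgren's uniqueness theorem forces $\varphi$ to coincide with the free-space harmonic extension of $H$ across $\partial D$, and the insulator condition $\partial\varphi/\partial\nu = 0$ on $\partial B$ is then violated unless $H$ is degenerate. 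Once strict convexity is secured, the minimizer on $[a_0,b_0]$ is the projection of the unconstrained vertex $\varepsilon_m\langle \partial\varphi/\partial\nu,\partial H/\partial\nu\rangle / \|\partial\varphi/\partial\nu\|^{2}$ onto $[a_0,b_0]$, which is unique in each of the three cases (interior point, left endpoint, right endpoint).

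The main obstacle is the nondegeneracy claim $\|\partial\varphi/\partial\nu\|_{L^{2}(\partial D)} > 0$ for general geometry. For the concentric-disk and confocal-ellipse configurations treated in Sections~\ref{subsec:cloaking-annulus} and \ref{subsec:cloaking-ellipses} this is immediate from the explicit layer-potential formulas, but for an arbitrary inclusion $B\subset D$ one must impose a mild compatibility hypothesis on $H$ (ruling out the pathological case in which $H$ already solves the insulator problem) so that the Holmgren step applies cleanly. Every other ingredient, being scalar quadratic minimization on a closed interval, is entirely routine.
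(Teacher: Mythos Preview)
Your approach is essentially the same as the paper's: both establish continuity of $\mathcal{G}$ on the compact interval $[a_0,b_0]$ (the paper via an explicit Lipschitz estimate, you via the observation that $\mathcal{G}$ is a quadratic polynomial in $\varepsilon_s$) and then invoke strict convexity for uniqueness. The only substantive difference is that you explicitly isolate and address the nondegeneracy hypothesis $\|\partial\varphi/\partial\nu\|_{L^2(\partial D)}>0$ needed for strict convexity, supplying a Holmgren-type argument; the paper simply writes the strict inequality $\mathcal{G}(\lambda_1\varepsilon_s^{(1)}+\lambda_2\varepsilon_s^{(2)})<\lambda_1\mathcal{G}(\varepsilon_s^{(1)})+\lambda_2\mathcal{G}(\varepsilon_s^{(2)})$ without comment, leaving that point implicit. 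Your treatment is therefore a slightly more careful version of the same argument, and the explicit vertex formula you give for the unconstrained minimizer is a minor bonus the paper does not record.
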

\renewcommand{\proofname}{Proof}
\begin{proof}
For any $\varepsilon_s^{(1)}$, $\varepsilon_s^{(2)}\in [a_0, b_0]$, by the $L^2$-estimation of solution for (\ref{cloaking-varphi-Dirichlet}),  we have
\begin{align*}
&|\mathcal{G}\big(\varepsilon_s^{(1)}\big) - \mathcal{G}\big(\varepsilon_s^{(2)}\big)| \\
&\leq 2 \big|\varepsilon_s^{(1)}-  \varepsilon_s^{(2)}\big|\cdot \Big\|\varepsilon_m\frac{\p H}{\p \nu}\Big\|_{L^2(\partial D)} \cdot \Big\|\frac{\p \varphi}{\p\nu}\Big\|_{L^2(\partial D)} +  \big|\varepsilon_s^{(1)} -  \varepsilon_s^{(2)}\big|\cdot \big|\varepsilon_s^{(1)} +  \varepsilon_s^{(2)}\big|\cdot\Big\| \frac{\p \varphi}{\p \nu}\Big\|^2_{L^2(\partial D)}\\
&\leq C \big|\varepsilon_s^{(1)} -  \varepsilon_s^{(2)}\big|.
\end{align*}
Hence $\mathcal{G}(\varepsilon_s)$ is Lipschitz continuous in $[a_0, b_0]$. Furthermore, supposing $\lambda_1$, $\lambda_2 \in (0, 1)$ and $\lambda_1+\lambda_2=1$, we obtain
\begin{align*}
&\mathcal{G}\big(\lambda_1\varepsilon_s^{(1)}+\lambda_2 \varepsilon_s^{(2)}\big)\\
&= \Big\|\big(\lambda_1\varepsilon_s^{(1)}+\lambda_2 \varepsilon_s^{(2)}\big) \frac{\partial \varphi}{\partial \nu}-\varepsilon_m\frac{\partial H}{\partial \nu} \Big\|^2_{L^2(\partial D)}\\
&< \lambda_1\Big\|\varepsilon_s^{(1)} \frac{\partial \varphi}{\partial \nu}-\varepsilon_m\frac{\partial H}{\partial \nu}\Big\|^2_{L^2(\partial \Omega)} + \lambda_2\Big\|\varepsilon_s^{(2)} \frac{\partial \varphi}{\partial \nu}-\varepsilon_m\frac{\partial H}{\partial \nu}\Big\|^2_{L^2(\partial \Omega)}\\
&= \lambda_1\mathcal{G}\big(\varepsilon_s^{(1)}\big)+ \lambda_2 \mathcal{G}\big(\varepsilon_s^{(2)}\big).
\end{align*}
Then $\mathcal{G}(\varepsilon_s)$ is convex strictly in $[a_0, b_0]$.  Therefore the cost functional \eqref{cost-functional-ep-cloaking} has a unique minimizer.

The proof is complete.
\end{proof}

The following  decomposition lemma  presents an explicit relationship between $p$ and $\zeta_0$, which  is of critical importance for our subsequent analysis.
\begin{lem}\label{lem:decomposition}
  The solution to \eqref{p-cloaking-Dirichlet} can be represented as
  \begin{align*}
    p = p_1 + \zeta_0 p_2  \qquad \mbox{in }\  \Omega\setminus\overline{D}
  \end{align*}
  where $p_1$ and $p_2$ are independent on $\zeta_0$.
\end{lem}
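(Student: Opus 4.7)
The plan is to invoke linearity together with the one-way coupling structure of the electro-osmosis model. By the one-way coupling, the electrostatic potential $\varphi$ appearing as data on the right-hand sides in (\ref{p-cloaking-Dirichlet}) is determined by (\ref{cloaking-varphi-Dirichlet}) through the choices of $B$, $D$, $\varepsilon_s$, $\varepsilon_m$ and $H$ alone, and is therefore independent of $\zeta_0$. Observing that (\ref{p-cloaking-Dirichlet}) is a homogeneous Laplace problem with Neumann data that is affine in $\zeta_0$, a natural superposition ansatz will split the $\zeta_0$-dependence off.

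Concretely, I would introduce $p_1$ and $p_2$ as the unique (up to an additive constant) solutions of the interior mixed Neumann problems
\begin{align*}
\begin{cases}
\ds \Delta p_1 = 0 & \mbox{in } \Omega\setminus\overline{D}, \ms\\
\ds \frac{\p p_1}{\p \nu} = 0 & \mbox{on } \p D, \ms\\
\ds \frac{\p p_1}{\p \nu} = \frac{\p P}{\p \nu} & \mbox{on } \p \Omega,
\end{cases}
\qquad
\begin{cases}
\ds \Delta p_2 = 0 & \mbox{in } \Omega\setminus\overline{D}, \ms\\
\ds \frac{\p p_2}{\p \nu} = -12\frac{\p \varphi}{\p \nu} & \mbox{on } \p D, \ms\\
\ds \frac{\p p_2}{\p \nu} = -12\frac{\p \varphi}{\p \nu} & \mbox{on } \p \Omega.
\end{cases}
\end{align*}
Since neither problem contains $\zeta_0$, both $p_1$ and $p_2$ are manifestly independent of $\zeta_0$. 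To confirm well-posedness I would verify the standard compatibility condition for each mixed Neumann problem. For $p_1$ it reduces to $\int_{\p \Omega}\frac{\p P}{\p \nu}\,\d s=0$, which is immediate from the harmonicity of $P$ in $\R^2$ via the divergence theorem in $\Omega$. For $p_2$ it reduces to $\int_{\p \Omega}\frac{\p \varphi}{\p \nu}\,\d s=\int_{\p D}\frac{\p \varphi}{\p \nu}\,\d s$, which follows by applying the divergence theorem to $\varphi$ on $\Omega\setminus\overline{D}$ where $\varphi$ is harmonic. Standard elliptic theory then supplies existence of $p_1$ and $p_2$, with the residual additive constant fixed by the same normalization convention as the one used for $p$.

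To finish, set $\tilde p:=p_1+\zeta_0 p_2$ and substitute into (\ref{p-cloaking-Dirichlet}): linearity of the Laplacian and of the normal-derivative trace immediately yields harmonicity of $\tilde p$ in $\Omega\setminus\overline{D}$, while a direct computation recovers the affine-in-$\zeta_0$ Neumann data on both $\p D$ and $\p \Omega$. By uniqueness of the Neumann problem (modulo the chosen constant) we conclude $p=\tilde p$, which is exactly the asserted decomposition. The main technical obstacle is the verification of the two compatibility conditions above; beyond that, the lemma is a routine consequence of superposition and of the one-way coupling between $\varphi$ and $p$.
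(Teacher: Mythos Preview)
Your argument is correct, and it takes a genuinely different route from the paper's proof. The paper represents $p$ by single-layer potentials, $p=\mathcal{S}_{\Gamma_i}[\psi_i]+\mathcal{S}_{\Gamma_e}[\psi_e]$, rewrites the boundary system as $(\tfrac{1}{2}\mathbb{I}+\mathbb{K}^*)\Psi=g$, splits the right-hand side $g=g_1+\zeta_0 g_2$, and then invokes invertibility of $\tfrac{1}{2}\mathbb{I}+\mathbb{K}^*$ to obtain $\Psi=\Psi_1+\zeta_0\Psi_2$, hence $p=p_1+\zeta_0 p_2$ with explicit layer-potential expressions for $p_1,p_2$. You instead bypass the integral-operator machinery entirely, defining $p_1$ and $p_2$ directly as solutions of two Neumann sub-problems whose data are the $\zeta_0$-free and $\zeta_0$-linear pieces of the original boundary data, then appealing to linearity and uniqueness. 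Your approach is more elementary and makes the affine dependence on $\zeta_0$ completely transparent; it also has the virtue that you explicitly verify the Neumann compatibility conditions, whereas the paper simply asserts invertibility of the matrix Neumann--Poincar\'e operator. The paper's route, on the other hand, yields concrete integral representations of $p_1$ and $p_2$, though these are not actually exploited in the subsequent convexity and stability arguments, so nothing is lost by your method for the purposes of this section.
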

\begin{proof}
  Let the solution to \eqref{p-cloaking-Dirichlet} be represented as
  \begin{align}\label{p-cloaking-Dirichlet-layer-potential}
   p = \mathcal{S}_{\Gamma_i}[\psi_i](x) + \mathcal{S}_{\Gamma_e}[\psi_e](x),\quad  x\in  \Omega\setminus\overline{D},
  \end{align}
  where the density function pair $(\phi_o, \phi_i)\in L_0^2(\Gamma_o)\times L_0^2(\Gamma_i)$ satisfy
  \begin{align}
\label{p-cloaking-Dirichlet-density-equation}
\begin{cases}
\ds \Big(\frac{1}{2} I+\mathcal{K}^*_{\Gamma_i}\Big)[\psi_i]
+\frac{\partial\mathcal{S}_{\Gamma_e}[\psi_e]}{\partial\nu_i}= -12 \zeta_0\frac{\partial \varphi}{\partial \nu_i} &\quad  \mbox{on }\Gamma_i, \ms\\
\ds \frac{\partial\mathcal{S}_{\Gamma_i}[\psi_i]}{\partial\nu_e} + \Big(-\frac{1}{2} I + \mathcal{K}^*_{\Gamma_e}\Big)[\psi_e] = \frac{\partial P}{\partial \nu_e} - 12 \zeta_0\frac{\partial \varphi}{\partial \nu_e} &\quad  \mbox{on }\Gamma_i. \
\end{cases}
\end{align}
Denote the Neumann–Poincaré-type operator $\mathbb{K}^*$, $\Psi$ and $g$ by
\begin{align*}
  \mathbb{K}^*:=\left[
  \begin{array}{cc}
 \ds  \mathcal{K}^*_{\Gamma_i} &\ds \frac{\partial}{\partial\nu_i}\mathcal{S}_{\Gamma_e} \ms \\
 \ds  -\frac{\partial}{\partial\nu_e}\mathcal{S}_{\Gamma_i} &  \ds -\mathcal{K}^*_{\Gamma_e}
  \end{array}
  \right],\quad
  \Psi := \left[
  \begin{array}{c}
    \psi_i \ms\\
    \psi_e
  \end{array}
  \right],\quad
  g:=  \left[
  \begin{array}{c}
 \ds -12 \zeta_0\frac{\partial \varphi}{\partial \nu_i}\ms \\
  \ds -\frac{\partial P}{\partial \nu_e} + 12 \zeta_0\frac{\partial \varphi}{\partial \nu_e}
  \end{array}
  \right]
 \end{align*}
 Then, \eqref{p-cloaking-Dirichlet-density-equation} can be rewritten in the form
 \begin{align*}
   \Big(\frac{1}{2}  \mathbb{I}+\mathbb{K}^*\Big)\Psi =g,
 \end{align*}
 where $\mathbb{I}$ is given by
 \begin{align*}
   \mathbb{I}:= \left[
  \begin{array}{cc}
   I & 0 \ms \\
   0 &  I
  \end{array}
  \right].
  \end{align*}

 We now decompose the right-hand side $g$ and let
 \begin{align*}
  g_1=  \left[
  \begin{array}{c}
 \ds 0\ms \\
  \ds -\frac{\partial P}{\partial \nu_e}
  \end{array}
  \right],\quad
    g_2=  \left[
  \begin{array}{c}
 \ds -12 \frac{\partial \varphi}{\partial \nu_i}\ms \\
  \ds  12 \frac{\partial \varphi}{\partial \nu_e}
  \end{array}
  \right],
 \end{align*}
 then $g =g_1 + \zeta_0 g_2$. Furthermore, since $\big(\frac{1}{2}  \mathbb{I}+\mathbb{K}^*\big)$ is invertible, it follows that $\Psi = \Psi_1 + \zeta_0 \Psi_2$, that is,
\begin{align}\label{p-cloaking-Dirichlet-density-decomposition}
\psi_i = \psi_{i,1} +\zeta_0 \psi_{i,2} \quad \mbox{and } \quad \psi_e = \psi_{e,1} +\zeta_0 \psi_{e,2}.
\end{align}
Substituting \eqref{p-cloaking-Dirichlet-density-decomposition} into \eqref{p-cloaking-Dirichlet-layer-potential}, we have
\begin{align*}
p = \mathcal{S}_{\Gamma_i}[\psi_{i,1}](x) + \mathcal{S}_{\Gamma_e}[\psi_{e,1}](x) + \zeta_0\big(\mathcal{S}_{\Gamma_i}[\psi_{i,2}](x) + \mathcal{S}_{\Gamma_e}[\psi_{e,2}](x)\big),\quad  x\in  \Omega\setminus\overline{D}.
\end{align*}
Let $p_1=\mathcal{S}_{\Gamma_i}[\psi_{i,1}](x) + \mathcal{S}_{\Gamma_e}[\psi_{e,1}](x)$ and $p_2=\mathcal{S}_{\Gamma_i}[\psi_{i,2}](x) + \mathcal{S}_{\Gamma_e}[\psi_{e,2}](x)$ in $\Omega\setminus\overline{D}$, then $p=p_1+\zeta_0 p_2$.

The proof is complete.
\end{proof}

\begin{thm}
There exists a unique optimal zeta potential  $\zeta_{0,opt} \in [c_0, d_0]$, which minimizes the cost functional $\mathcal{F}(\zeta_0)$ with PDE-constraint (\ref{p-cloaking-Dirichlet}) over all $\zeta_0\in [c_0, d_0]$.
\end{thm}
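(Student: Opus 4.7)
The plan is to adapt the Lipschitz-continuity-plus-strict-convexity argument used in the preceding theorem for $\mathcal{G}(\varepsilon_s)$, with Lemma \ref{lem:decomposition} serving to linearize the $\zeta_0$-dependence of the state $p$. Writing $p = p_1 + \zeta_0 p_2$ in $\Omega\setminus\overline{D}$ with $p_1, p_2$ independent of $\zeta_0$, the cost functional becomes the scalar quadratic
\begin{equation*}
\mathcal{F}(\zeta_0) = \bigl\| (p_1 - P) + \zeta_0 p_2 \bigr\|^2_{L^2(\partial \Omega)},
\end{equation*}
after which the two-step structure (continuity, then strict convexity) carries over almost verbatim.

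First I would establish Lipschitz continuity of $\mathcal{F}$ on the compact interval $[c_0, d_0]$. Expanding $\mathcal{F}(\zeta_0^{(1)}) - \mathcal{F}(\zeta_0^{(2)})$ as a difference of squared $L^2$-norms and applying Cauchy--Schwarz yields
\begin{equation*}
\bigl|\mathcal{F}(\zeta_0^{(1)}) - \mathcal{F}(\zeta_0^{(2)})\bigr| \le C \bigl|\zeta_0^{(1)} - \zeta_0^{(2)}\bigr|,
\end{equation*}
where $C$ depends only on $c_0, d_0, \partial D, \partial \Omega, H, P$, since the $L^2(\partial\Omega)$-bounds on $p_1 - P$ and $p_2$ follow from Theorem \ref{well-posedness} applied to the two auxiliary boundary-value problems solved by $p_1$ and $p_2$. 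Continuity on $[c_0, d_0]$ then yields the existence of a minimizer.

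For uniqueness, I would establish strict convexity. Taking distinct $\zeta_0^{(1)}, \zeta_0^{(2)} \in [c_0, d_0]$ and weights $\lambda_1, \lambda_2 \in (0,1)$ with $\lambda_1+\lambda_2 = 1$, linearity in $\zeta_0$ gives
\begin{equation*}
(p_1 - P) + \bigl(\lambda_1 \zeta_0^{(1)} + \lambda_2 \zeta_0^{(2)}\bigr) p_2 = \lambda_1 \bigl( (p_1 - P) + \zeta_0^{(1)} p_2 \bigr) + \lambda_2 \bigl( (p_1 - P) + \zeta_0^{(2)} p_2 \bigr),
\end{equation*}
and strict convexity of $\|\cdot\|^2_{L^2(\partial\Omega)}$ on pairs of distinct vectors then yields
\begin{equation*}
\mathcal{F}\bigl(\lambda_1 \zeta_0^{(1)} + \lambda_2 \zeta_0^{(2)}\bigr) < \lambda_1 \mathcal{F}\bigl(\zeta_0^{(1)}\bigr) + \lambda_2 \mathcal{F}\bigl(\zeta_0^{(2)}\bigr)
\end{equation*}
provided $p_2 \not\equiv 0$ on $\partial \Omega$. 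Combined with continuity on a compact interval, this forces the minimizer $\zeta_{0,opt} \in [c_0, d_0]$ to be unique.

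The main obstacle is verifying the non-degeneracy $p_2 \not\equiv 0$ on $\partial \Omega$; without it, $\mathcal{F}$ would be identically constant and uniqueness would fail. From the construction in the proof of Lemma \ref{lem:decomposition}, $p_2$ solves \eqref{p-cloaking-Dirichlet} with $P$ replaced by $0$ and $\zeta_0$ replaced by $1$, so its Neumann data on $\partial D$ and $\partial\Omega$ are, up to sign, determined by $\partial_\nu \varphi$. If $p_2 \equiv 0$ on $\partial \Omega$, then harmonicity of $p_2$ in $\Omega\setminus\overline{D}$ together with the vanishing trace on $\partial\Omega$ and the prescribed Neumann condition on $\partial D$ can be used, via unique continuation, to force $\partial_\nu \varphi \equiv 0$ on $\partial D$, contradicting the non-triviality of the background field $H$. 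This contradiction is the only genuinely new ingredient beyond the $\varepsilon_s$ case, and it leverages the one-way coupling already exploited in the derivation of Lemma \ref{lem:decomposition}.
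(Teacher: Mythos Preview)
Your approach is exactly the paper's: invoke Lemma \ref{lem:decomposition} to write $p=p_1+\zeta_0 p_2$, establish Lipschitz continuity of $\mathcal{F}$ on $[c_0,d_0]$ to get existence, then deduce strict convexity of $\zeta_0 \mapsto \|(p_1-P)+\zeta_0 p_2\|_{L^2(\partial\Omega)}^2$ for uniqueness. The paper proceeds in this order and simply asserts the strict inequality in the convexity step without further comment.

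You go beyond the paper in isolating the non-degeneracy condition $p_2\not\equiv 0$ on $\partial\Omega$, which the paper tacitly assumes. Your proposed fix, however, does not work as written: having $p_2=0$ on $\partial\Omega$ together with the Neumann condition $\partial_\nu p_2=-12\,\partial_\nu\varphi$ on $\partial D$ is merely a well-posed mixed boundary value problem, not an overdetermined one, so unique continuation does not apply and nothing forces $\partial_\nu\varphi\equiv 0$ on $\partial D$. A correct argument is available: set $q:=p_2+12\varphi$, which is harmonic in $\Omega\setminus\overline{D}$ with $\partial_\nu q=0$ on both $\partial D$ and $\partial\Omega$ (from the Neumann data of $p_2$ in the decomposition), hence $q$ is constant; then $p_2\equiv 0$ on $\partial\Omega$ would force $\varphi$ to be constant on $\partial\Omega$, which contradicts the nontriviality of the background field $H$ (recall $\varphi=H$ in $\mathbb{R}^2\setminus\overline{D}$ once electric cloaking is in place).
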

\renewcommand{\proofname}{Proof}
\begin{proof}
For any $\zeta_0^{(1)}$, $\zeta_0^{(2)}\in [c_0, d_0]$, by the $L^2$-estimation of solution for (\ref{p-cloaking-Dirichlet}), from Lemma \ref{lem:decomposition} we have
\begin{align*}
&|\mathcal{F}\big(\zeta_0^{(1)}\big) - \mathcal{F}\big(\zeta_0^{(2)}\big)| \\
&\leq 2 \big|\zeta_0^{(1)}-  \zeta_0^{(2)}\big|\cdot \|p_1 - P\|_{L^2(\partial \Omega)} \cdot \|p_2\|_{L^2(\partial \Omega)} + \big|\zeta_0^{(1)} -  \zeta_0^{(2)}\big|\cdot \big|\zeta_0^{(1)} +  \zeta_0^{(2)}\big|\cdot \|p_2\|^2_{L^2(\partial \Omega)}\\
&\leq C \big|\zeta_0^{(1)} -  \zeta_0^{(2)}\big|.
\end{align*}
Hence $\mathcal{F}(\zeta_0)$ is Lipschitz continuous in $[a_0, b_0]$. Furthermore, supposing $\lambda_1$, $\lambda_2 \in (0, 1)$ and $\lambda_1+\lambda_2=1$, we obtain
\begin{align*}
&\mathcal{F}\Big(\lambda_1\zeta_0^{(1)}+\lambda_2 \zeta_0^{(2)}\Big)\\
&= \big\|p_1 +\big(\lambda_1\zeta_0^{(1)}+\lambda_2 \zeta_0^{(2)}\big)p_2-P\big\|^2_{L^2(\partial \Omega)}\\
&< \lambda_1\big\|p_1 + \zeta_0^{(1)} p_2-P\Big\|^2_{L^2(\partial \Omega)} + \lambda_2\Big\|p_1 + \zeta_0^{(2)} p_2 - P \big\|^2_{L^2(\partial \Omega)}\\
&= \lambda_1\mathcal{F}\big(\zeta_0^{(1)}\big)+ \lambda_2 \mathcal{F}\big(\zeta_0^{(2)}\big).
\end{align*}
Then $\mathcal{F}(\zeta_0)$ is convex strictly in $[c_0, d_0]$.  Therefore the cost functional \eqref{cost-functional-cloaking} has a unique minimizer.

The proof is complete.
\end{proof}

We next show some stability results.
\begin{thm}
Let $\big\{\varphi^{(k)}\big\}$ be sequences such that $\varphi^{(k)}\rightarrow \varphi$, as $k \rightarrow \infty$ in $L^2(\partial D)$ and let $\{\varepsilon_{s,opt}^{(k)}\}$ be a minimizer of \eqref{cost-functional-ep-cloaking} with $\varphi$ replaced by $\varphi_k$. Then the optimal permittivity is stable with respect to the electrostatic potential $\varphi$, i.e., ${\varepsilon_{s,opt}^{(k)}}\rightarrow\varepsilon_{s,opt}$.
\end{thm}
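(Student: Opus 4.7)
The plan is to establish stability via a standard subsequence-plus-uniqueness argument, leveraging the strict convexity already proved together with uniform convergence of the perturbed cost functionals. Throughout, I interpret the hypothesis ``$\varphi^{(k)}\to\varphi$ in $L^2(\p D)$'' as providing convergence of the normal derivatives $\partial_\nu \varphi^{(k)} \to \partial_\nu \varphi$ in $L^2(\p D)$, which is what actually enters $\mathcal{G}$; if only trace convergence is assumed, one can upgrade it using the layer-potential representation \eqref{sol-ep} from Theorem \ref{well-posedness} applied to each $\varphi^{(k)}$ and the boundedness of the associated boundary integral operators.

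First, I would show that the perturbed functional $\mathcal{G}^{(k)}(\varepsilon_s) := \big\|\varepsilon_s \tfrac{\p \varphi^{(k)}}{\p\nu} - \varepsilon_m \tfrac{\p H}{\p\nu}\big\|_{L^2(\p D)}^2$ converges to $\mathcal{G}(\varepsilon_s)$ uniformly on $[a_0,b_0]$. Expanding the difference of squares and using $|\varepsilon_s|\le \max(|a_0|,|b_0|)$ together with the triangle inequality gives an estimate of the form
\begin{equation*}
|\mathcal{G}^{(k)}(\varepsilon_s) - \mathcal{G}(\varepsilon_s)| \le C\,\Big\|\tfrac{\p(\varphi^{(k)}-\varphi)}{\p\nu}\Big\|_{L^2(\p D)}\cdot \Big(\Big\|\tfrac{\p\varphi^{(k)}}{\p\nu}\Big\|_{L^2(\p D)}+\Big\|\tfrac{\p\varphi}{\p\nu}\Big\|_{L^2(\p D)}+\Big\|\tfrac{\p H}{\p\nu}\Big\|_{L^2(\p D)}\Big),
\end{equation*}
with $C$ depending only on $a_0,b_0,\varepsilon_m$. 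Since $\partial_\nu\varphi^{(k)}\to\partial_\nu\varphi$ in $L^2(\p D)$, the right-hand side tends to $0$ uniformly in $\varepsilon_s\in[a_0,b_0]$.

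Second, because $\varepsilon_{s,opt}^{(k)} \in [a_0,b_0]$ is bounded, I can extract a subsequence (not relabeled) with $\varepsilon_{s,opt}^{(k)}\to \varepsilon^{*}\in [a_0,b_0]$. For any fixed $\varepsilon\in[a_0,b_0]$, the minimizing property gives $\mathcal{G}^{(k)}(\varepsilon_{s,opt}^{(k)})\le \mathcal{G}^{(k)}(\varepsilon)$; passing to the limit using the uniform convergence from Step~1 together with the continuity (indeed Lipschitz continuity) of $\mathcal{G}$ yields $\mathcal{G}(\varepsilon^{*})\le \mathcal{G}(\varepsilon)$. Hence $\varepsilon^{*}$ is a minimizer of $\mathcal{G}$ on $[a_0,b_0]$.

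Third, the strict convexity of $\mathcal{G}$ established in the preceding theorem ensures the minimizer is unique, so $\varepsilon^{*}=\varepsilon_{s,opt}$. Since the original sequence is bounded and every convergent subsequence has the same limit $\varepsilon_{s,opt}$, the entire sequence satisfies $\varepsilon_{s,opt}^{(k)}\to \varepsilon_{s,opt}$. The main obstacle is the first step: one must be careful about in which norm the data $\varphi^{(k)}$ converges, since $\mathcal{G}$ depends on boundary traces of $\nabla\varphi$ rather than $\varphi$ itself. Once this is pinned down (either as an assumption or via the layer-potential machinery of Section~\ref{sec:layer-potentials}), the rest is the standard Weierstrass-type convergence argument enabled by compactness of $[a_0,b_0]$ and uniqueness of the minimizer.
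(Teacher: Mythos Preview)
Your proposal is correct and follows essentially the same route as the paper: extract a convergent subsequence via compactness of $[a_0,b_0]$, pass to the limit in the minimizing inequality, and invoke uniqueness of the minimizer. You are in fact more careful than the paper on two points: you explicitly verify uniform convergence of $\mathcal{G}^{(k)}$ to $\mathcal{G}$ before passing to the limit, and you close the argument by noting that every convergent subsequence has the same limit so the full sequence converges, whereas the paper stops at the subsequence level. Your flag about the hypothesis---that $\mathcal{G}$ depends on $\partial_\nu\varphi$ rather than $\varphi$, so one really needs $\partial_\nu\varphi^{(k)}\to\partial_\nu\varphi$ in $L^2(\partial D)$---is well taken; the paper silently assumes the same thing in its limit computation.
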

\begin{proof}
  From the definition of ${\varepsilon_{s,opt}^{(k)}}$, we find
  \begin{align}\label{ineq}
  \Big\|\varepsilon_{s,opt}^{(k)} \frac{\partial \varphi^{(k)}}{\partial \nu} - \varepsilon_m\frac{\partial H}{\partial \nu}\Big\|^2_{L^2(\partial D)}
  \leq\Big\|\varepsilon_{s} \frac{\partial \varphi^{(k)}}{\partial \nu} - \varepsilon_m\frac{\partial H}{\partial \nu}\Big\|^2_{L^2(\partial D)}, \quad \forall \;  \zeta_0\in [a_0, b_0].
  \end{align}
 Since $\varepsilon_{s,opt}^{(k)}\in [a_0, b_0]$, there exists a subsequence, still denoted $\{\varepsilon_{s,opt}^{(k)}\}$, which implies $\varepsilon_{s,opt}^{(k)}\rightarrow\varepsilon_{s,opt}^{(0)}$  and $\varepsilon_{s,opt}\in [a_0, b_0]$.  Based on the continuity of $L^2$-norm, by \eqref{ineq} it follows that
  \begin{align*}
  \left\|\varepsilon_{s,opt}^{(0)} \frac{\partial \varphi}{\partial \nu} - \varepsilon_m\frac{\partial H}{\partial \nu}\right\|^2_{L^2(\partial D)}
  &= \lim_{k\rightarrow\infty}\left\|\varepsilon_{s,opt}^{(k)} \frac{\partial \varphi^{(k)}}{\partial \nu} - \varepsilon_m\frac{\partial H}{\partial \nu}\right\|^2_{L^2(\partial D)}\\
  &\leq \lim_{k\rightarrow\infty}\left\|\varepsilon_{s} \frac{\partial \varphi^{(k)}}{\partial \nu} - \varepsilon_m\frac{\partial H}{\partial \nu}\right\|^2_{L^2(\partial D)}\\
  &=  \left\|\varepsilon_{s} \frac{\partial \varphi}{\partial \nu} - \varepsilon_m\frac{\partial H}{\partial \nu}\right\|^2_{L^2(\partial D)},
  \end{align*}
  for  all $\varepsilon_s\in [a_0, b_0]$. This deduces that $\varepsilon_{s,opt}^{(0)}$ is a minimizer of $\mathcal{G}(\varepsilon_s)$. By the uniqueness of the minimizer, we have $\varepsilon_{s,opt}^{(0)}=\varepsilon_{s,opt}$.
  The proof is complete.
\end{proof}

\begin{thm}
Let $\big\{p_1^{(k)}\big\}$ and $\{p_2^{(k)}\}$ be sequences such that $p_1^{(k)}\rightarrow p_1$ and $p_2^{(k)} \rightarrow p_2$, as $k \rightarrow \infty$ in $L^2(\partial \Omega)$ and let $\{\zeta_{0,opt}^{(k)}\}$ be a minimizer of \eqref{cost-functional-cloaking} with $\varphi$ and $p$ replaced by
$\varphi_k$ and $p_k$. Then the optimal zeta potential is stable with respect to the electrostatic potential $\varphi$ and pressure $p$, i.e., ${\zeta_{0,opt}^{(k)}}\rightarrow\zeta_{0,opt}$.
\end{thm}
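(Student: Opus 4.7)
The plan is to mimic the stability argument given for $\varepsilon_{s,opt}$, with the decomposition from Lemma \ref{lem:decomposition} playing the key role of making the dependence of $p$ on $\zeta_0$ affine and explicit. Writing $p^{(k)} = p_1^{(k)} + \zeta_0 p_2^{(k)}$, the cost functional along the sequence becomes
\begin{equation*}
\mathcal{F}^{(k)}(\zeta_0) = \bigl\| p_1^{(k)} + \zeta_0 p_2^{(k)} - P \bigr\|_{L^2(\partial \Omega)}^2,
\end{equation*}
so the $L^2(\partial\Omega)$-convergences $p_1^{(k)} \to p_1$ and $p_2^{(k)} \to p_2$ translate directly into locally uniform convergence of $\mathcal{F}^{(k)}$ to $\mathcal{F}$ on the bounded interval $[c_0, d_0]$.

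First I would record, from the definition of $\zeta_{0,opt}^{(k)}$, the inequality
\begin{equation*}
\bigl\| p_1^{(k)} + \zeta_{0,opt}^{(k)} p_2^{(k)} - P \bigr\|_{L^2(\partial\Omega)}^2 \leq \bigl\| p_1^{(k)} + \zeta_0 p_2^{(k)} - P \bigr\|_{L^2(\partial\Omega)}^2
\end{equation*}
for every $\zeta_0 \in [c_0, d_0]$. Since $\{\zeta_{0,opt}^{(k)}\} \subset [c_0, d_0]$ is bounded, Bolzano--Weierstrass yields a subsequence, still denoted $\{\zeta_{0,opt}^{(k)}\}$, with $\zeta_{0,opt}^{(k)} \to \zeta_{0,opt}^{(0)} \in [c_0, d_0]$.

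Next I would pass to the limit $k \to \infty$ in the inequality above, using continuity of the $L^2(\partial\Omega)$-norm together with the convergences $p_1^{(k)} \to p_1$, $p_2^{(k)} \to p_2$ and $\zeta_{0,opt}^{(k)} \to \zeta_{0,opt}^{(0)}$, which gives
\begin{equation*}
\bigl\| p_1 + \zeta_{0,opt}^{(0)} p_2 - P \bigr\|_{L^2(\partial\Omega)}^2 \leq \bigl\| p_1 + \zeta_0 p_2 - P \bigr\|_{L^2(\partial\Omega)}^2 = \mathcal{F}(\zeta_0), \quad \forall \zeta_0 \in [c_0, d_0].
\end{equation*}
Thus $\zeta_{0,opt}^{(0)}$ is a minimizer of $\mathcal{F}$, and by the uniqueness of the minimizer established in the previous theorem, $\zeta_{0,opt}^{(0)} = \zeta_{0,opt}$. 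A standard subsequence argument (every subsequence of $\{\zeta_{0,opt}^{(k)}\}$ admits a further subsequence converging to the same limit $\zeta_{0,opt}$) then upgrades the convergence to the whole sequence.

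The only mild subtlety I anticipate is justifying the passage to the limit of the product $\zeta_{0,opt}^{(k)} p_2^{(k)}$ in $L^2(\partial\Omega)$; this is routine since $\{\zeta_{0,opt}^{(k)}\}$ is a bounded scalar sequence and $p_2^{(k)} \to p_2$ strongly, so the bound $\|\zeta_{0,opt}^{(k)} p_2^{(k)} - \zeta_{0,opt}^{(0)} p_2\|_{L^2} \leq |\zeta_{0,opt}^{(k)}| \|p_2^{(k)} - p_2\|_{L^2} + |\zeta_{0,opt}^{(k)} - \zeta_{0,opt}^{(0)}| \|p_2\|_{L^2}$ gives the required convergence. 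The overall structure is the classical compactness-plus-uniqueness scheme, and the decomposition lemma is precisely what makes this scheme applicable in the present setting.
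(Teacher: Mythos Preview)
Your proposal is correct and follows essentially the same approach as the paper's proof: extract a convergent subsequence from the bounded sequence $\{\zeta_{0,opt}^{(k)}\}$, pass to the limit in the minimizing inequality using the decomposition $p = p_1 + \zeta_0 p_2$ and continuity of the $L^2$-norm, and conclude via uniqueness of the minimizer. Your version is in fact slightly more complete, since you explicitly justify the product convergence $\zeta_{0,opt}^{(k)} p_2^{(k)} \to \zeta_{0,opt}^{(0)} p_2$ and the upgrade from subsequential to full-sequence convergence, both of which the paper leaves implicit.
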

\begin{proof}
  From Lemma \ref{lem:decomposition} and the definition of ${\zeta_{0,opt}^{(k)}}$, we find
  \begin{align}\label{ineq}
  \Big\|p_{1}^{(k)} + \zeta_{0,opt}^{(k)} \; p_{2}^{(k)} - P\Big\|^2_{L^2(\partial \Omega)}
  \leq\Big\|p_{1}^{(k)} + \zeta_0 \; p_{2}^{(k)} - P\Big\|^2_{L^2(\partial \Omega)}, \quad \forall \;  \zeta_0\in [c_0, d_0].
  \end{align}
 Since $\zeta_{0,opt}^{(k)}\in [c_0, d_0]$, there exists a subsequence, still denoted $\{\zeta_{0,opt}^{(k)}\}$, which implies $\zeta_{0,opt}^{(k)}\rightarrow\zeta_{0,opt}^{(0)}$  and $\zeta_{0,opt}\in [c_0, d_0]$.  Based on the continuity of $L^2$-norm, by \eqref{ineq} it follows that
  \begin{align*}
  \left\|p_{1} + \zeta_{0,opt}^{(0)} \; p_{2} - P\right\|^2_{L^2(\partial \Omega)}
  &= \lim_{k\rightarrow\infty}\left\|p_{1}^{(k)} + \zeta_{0,opt}^{(k)} \; p_{2}^{(k)} - P\right\|^2_{L^2(\partial \Omega)}\\
  &\leq \lim_{k\rightarrow\infty}\left\|p_{1}^{(k)} + \zeta_0 \; p_{2}^{(k)} - P\right\|^2_{L^2(\partial \Omega)}\\
  &=  \left\|p_{1} + \zeta_0 \; p_{2} - P\right\|^2_{L^2(\partial \Omega)},
  \end{align*}
  for  all $\zeta_0\in [c_0, d_0]$. This deduces that $\zeta_{0,opt}^{(0)}$ is a minimizer of $\mathcal{F}(\zeta_0)$. By the uniqueness of the minimizer, we have $\zeta_{0,opt}^{(0)}=\zeta_{0,opt}$.
  The proof is complete.
\end{proof}
\begin{rem}
Indeed, we can infer the stability of minimizer with respect to  background fields from the continuous dependence on background fields $H$ and $P$ of the solutions of (\ref{cloaking-varphi-Dirichlet}) and (\ref{p-cloaking-Dirichlet}).
\end{rem}

Finally, we give the proof of Theorem \ref{thm-estimation}.
\renewcommand{\proofname}{\indent Proof of  Theorem \ref{thm-estimation}}
\begin{proof}
We first prove the case of electric cloaking.
From the boundary condition $\varepsilon_m\frac{\partial \varphi}{\partial \nu} \big|_{+} = \varepsilon_s \frac{\partial \varphi}{\partial \nu} \big|_{-} $  on $\partial D$ defined in
equation \eqref{electro-osmotic equation}, we derive
\begin{align}\label{eq-boundary-electric}
\varepsilon_m\frac{\partial \varphi}{\partial \nu} \Big|_{+} - \varepsilon_m \frac{\partial H}{\partial \nu} = \varepsilon_s \frac{\partial \varphi}{\partial \nu} \Big|_{-} - \varepsilon_m \frac{\partial H}{\partial \nu} \quad \mbox{on } \p D.
\end{align}
Applying the $L^2(\partial D)$ norm to both sides of the equation \eqref{eq-boundary-electric} gives
\begin{align*}
 \Big\|\varepsilon_m\frac{\partial \varphi}{\partial \nu} \Big|_{+} - \varepsilon_m \frac{\partial H}{\partial \nu} \Big\|_{L^2(\partial D)} =\Big\| \varepsilon_s \frac{\partial \varphi}{\partial \nu} \Big|_{-} - \varepsilon_m \frac{\partial H}{\partial \nu}\Big\|_{L^2(\partial D)}.
\end{align*}
Setting $\varepsilon_s=\varepsilon_{s,opt}$ and using the condition $\mathcal{G}(\varepsilon_{s,opt})<\epsilon^2$, we obtain $\Big\| \frac{\partial \varphi}{\partial \nu} \big|_{+} -  \frac{\partial H}{\partial \nu} \Big\|_{L^2(\partial D)} < \epsilon$.
Applying Green's formula \cite{Kress1989} to $H$ and $\varphi$ in $\R^2 \setminus \overline{D}$, we have
\begin{alignat}{2}\label{Gf-exterior1-electric}
\ds H(x)&=H_{\infty} + \int_{\p D} G(x,y) \frac{\p H}{\p \nu}(y)\d s(y) - \int_{\p D} \frac{\p G(x,y)}{\p \nu(y)}H(y)\d s(y), \quad &&x\in \R^2 \setminus \overline{D},\\
\ds \varphi(x)&=\varphi_{\infty} +  \int_{\p D} G(x,y) \frac{\p \varphi}{\p \nu}(y)\d s(y) - \int_{\p D} \frac{\p G(x,y)}{\p \nu(y)}\varphi(y)\d s(y), \quad &&x\in \R^2\setminus \overline{D}, \label{Gf-exterior2-electric}
\end{alignat}
where the mean value property at infinity
\begin{align*}
H_{\infty} = \frac{1}{2 \pi r} \int_{|y|=r} H(y) \d s(y) \quad \mbox{and} \quad \varphi_{\infty} = \frac{1}{2 \pi r} \int_{|y|=r} \varphi(y) \d s(y)
\end{align*}
for sufficiently large $r$ are satisfied.
In the case of cloaking, we require  $\varphi|_{+}=H$ on $\p D$. Moreover, from \eqref{Gf-exterior1-electric}, \eqref{Gf-exterior2-electric} and the boundary condition $\Big\|\frac{\partial \varphi}{\partial \nu} \big|_{+} -  \frac{\partial H}{\partial \nu}\Big\|_{L^2(\partial D)} < \epsilon$, the following inequalities hold.
\begin{align*}
|\varphi(x) - H(x)|&\leq |\varphi_{\infty} - H_{\infty}| + \Big|\int_{\p D} G(x,y) \Big(\frac{\p \varphi}{\p \nu}(y)-\frac{\p H}{\p \nu}(y)\Big)\d s(y) - \int_{\p D} \frac{\p G(x,y)}{\p \nu(y)}\big(\varphi(y) - H(y)\big)\d s(y)\Big|\\
&\leq \frac{1}{2 \pi r} \int_{|y|=r} |\varphi(y)- H(y)| \d s(y) + \Big(\int_{\p D} G^2(x,y)\d s(y)\Big)^{\frac{1}{2}} \cdot \Big(\int_{\p D}\Big(\frac{\p \varphi}{\p \nu}(y)-\frac{\p H}{\p \nu}(y)\Big)^2\d s(y)\Big)^{\frac{1}{2}}\\
&\leq C\Big\|\frac{\partial \varphi}{\partial \nu} \Big|_{+} -  \frac{\partial H}{\partial \nu} \Big\|_{L^2(\partial D)}\\
&<\epsilon, \quad x\in \R^2 \setminus \overline{D}.
\end{align*}

We next prove the case of hydrodynamic clokaing.
From the boundary condition $ p|_{+} =  p|_{-}$  on $\partial \Omega$ defined in
equation \eqref{electro-osmotic equation}, we derive
\begin{align}\label{eq-boundary}
p|_{+} - P = p|_{-} -P \quad \mbox{on } \p \Omega.
\end{align}
Applying the $L^2(\partial \Omega)$ norm to both sides of the equation \eqref{eq-boundary} gives
\begin{align*}
 \big\|p|_{+} - P  \big\|_{L^2(\partial \Omega)} =\big\| p|_{-} - P \big\|_{L^2(\partial \Omega)}.
\end{align*}
Setting $\zeta_0=\zeta_{0,opt}$ and using the condition $\mathcal{F}(\zeta_{0,opt})<\epsilon^2$, we obtain $\big\| p |_{+} - P \big\|_{L^2(\partial \Omega)} < \epsilon$.
Applying Green's formula \cite{Kress1989} to $P$ and $p$ in $\R^2 \setminus \overline{\Omega}$, we have
\begin{alignat}{2}\label{Gf-exterior1}
\ds P(x)&=P_{\infty} + \int_{\p \Omega} G(x,y) \frac{\p P}{\p \nu}(y)\d s(y) - \int_{\p \Omega} \frac{\p G(x,y)}{\p \nu(y)}P(y)\d s(y), \quad &&x\in \R^2 \setminus \overline{\Omega},\\
\ds p(x)&=p_{\infty} +  \int_{\p \Omega} G(x,y) \frac{\p p}{\p \nu}(y)\d s(y) - \int_{\p \Omega} \frac{\p G(x,y)}{\p \nu(y)}p(y)\d s(y), \quad &&x\in \R^2\setminus \overline{\Omega}, \label{Gf-exterior2}
\end{alignat}
where the mean value property at infinity
\begin{align*}
P_{\infty} = \frac{1}{2 \pi r} \int_{|y|=r} P(y) \d s(y) \quad \mbox{and} \quad p_{\infty} = \frac{1}{2 \pi r} \int_{|y|=r} p(y) \d s(y)
\end{align*}
for sufficiently large $r$ are satisfied.
In the case of hydrodynamic cloaking, we require  $\frac{\partial p}{\partial \nu} \big|_{+}=\frac{\partial P}{\partial \nu}$ on $\p \Omega$. Moreover, from \eqref{Gf-exterior1}, \eqref{Gf-exterior2} and the boundary condition $\big\| p|_{+} - P   \big\|_{L^2(\partial \Omega)} < \epsilon$, the following inequalities hold.
\begin{align*}
|p(x) - P(x)|&\leq |p_{\infty} - P_{\infty}| + \Big|\int_{\p \Omega} G(x,y) \Big(\frac{\p p}{\p \nu}(y)-\frac{\p P}{\p \nu}(y)\Big)\d s(y) - \int_{\p \Omega} \frac{\p G(x,y)}{\p \nu(y)}\big(p(y) - P(y)\big)\d s(y)\Big|\\
&\leq \frac{1}{2 \pi r} \int_{|y|=r} |p(y)- P(y)| \d s(y) + \Big(\int_{\p \Omega} G^2(x,y)\d s(y)\Big)^{\frac{1}{2}} \cdot \Big(\int_{\p \Omega}( p(y)- P(y))^2\d s(y)\Big)^{\frac{1}{2}}\\
&\leq C\big\|  p |_{+} - P \big\|_{L^2(\partial \Omega)}\\
&<\epsilon, \quad x\in \R^2 \setminus \overline{\Omega}.
\end{align*}

The proof is complete.
\end{proof}

The following corollary can be found in the proof of Theorem \ref{thm-estimation}.
\begin{cor}\label{cor} Let $p$ be the solution to (\ref{electro-osmotic equation}) with $\varphi|_{+}=H$ on $\p D$. Then there exists a positive constant $C$ such that
\begin{align*}
|\varphi(x) - H(x)|\leq C\Big\|\frac{\partial \varphi}{\partial \nu} \Big|_{+} -  \frac{\partial H}{\partial \nu} \Big\|_{L^2(\partial D)}, \quad x\in \R^2 \setminus \overline{D}.
\end{align*}
 Let $p$ be the solution to (\ref{electro-osmotic equation}) with $\frac{\p p}{\p \nu}|_{-}=\frac{\p P}{\p \nu}$ on $\p \Omega$. Then there exists a positive constant $C$ such that
\begin{align*}
|p(x)-P(x)|\leq C\big\|  p |_{+} - P \big\|_{L^2(\partial \Omega)}, \quad x\in \R^2 \setminus \overline{\Omega}.
\end{align*}
\end{cor}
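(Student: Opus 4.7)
The plan is to recognize that the corollary is a clean repackaging of the boundary-integral estimate that appeared inside the proof of Theorem \ref{thm-estimation}. The two pointwise bounds are parallel: for the electric field the hypothesis forces a vanishing Dirichlet trace on $\partial D$ and controls the Neumann mismatch, while for the pressure the hypothesis forces a vanishing Neumann trace on $\partial \Omega$ and controls the Dirichlet mismatch. In both cases the proof would proceed in three steps: establish a Green's representation of the harmonic difference in the exterior domain, use the matching boundary condition from the hypothesis to eliminate one of the two boundary-integral contributions, and apply Cauchy--Schwarz to the survivor.

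For the first inequality, set $w = \varphi - H$ in $\mathbb{R}^2 \setminus \overline{D}$. Then $w$ is harmonic, satisfies $w|_+ = 0$ on $\partial D$ by hypothesis, and obeys $w = O(|x|^{-1})$ at infinity by the radiation condition in \eqref{electro-osmotic equation}. Green's third identity applied on the annular region bounded by $\partial D$ and a circle $\{|y|=r\}$ of sufficiently large radius yields
\begin{align*}
w(x) = w_\infty + \int_{\partial D} G(x,y) \Bigl( \partial_\nu\varphi|_+ - \partial_\nu H \Bigr)(y)\, ds(y), \qquad x \in \mathbb{R}^2 \setminus \overline{D},
\end{align*}
since the double-layer contribution drops by the Dirichlet hypothesis, and $w_\infty := (2\pi r)^{-1}\int_{|y|=r} w(y)\, ds(y)$ is the mean-value correction that vanishes as $r \to \infty$. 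Cauchy--Schwarz then gives
\begin{align*}
|\varphi(x) - H(x)| \le \Bigl( \int_{\partial D} G(x,y)^2\, ds(y) \Bigr)^{1/2} \Bigl\| \partial_\nu\varphi|_+ - \partial_\nu H \Bigr\|_{L^2(\partial D)},
\end{align*}
where the first factor is finite for every $x \in \mathbb{R}^2 \setminus \overline{D}$ and locally bounded, so that it can be absorbed into the constant $C$ of the statement.

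The second inequality is proved in the same way with $v = p - P$ on $\mathbb{R}^2 \setminus \overline{\Omega}$: the Neumann matching condition on $\partial \Omega$ eliminates the single-layer contribution in Green's representation and leaves only a double-layer term featuring the Dirichlet mismatch $p|_+ - P$; Cauchy--Schwarz against the $L^2(\partial \Omega)$ norm of $\partial_{\nu(y)} G(x,y)$ closes the estimate. The main technical point I anticipate is the rigorous justification of the vanishing of the mean-value-at-infinity term $w_\infty$ (and its pressure counterpart), which relies on the $O(|x|^{-1})$ decay imposed in \eqref{electro-osmotic equation}: the circular means are then of order $r^{-1}$ and tend to zero as $r \to \infty$. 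Modulo this asymptotic verification, both bounds are direct consequences of the single- and double-layer potential theory already collected in Section \ref{sec:layer-potentials}.
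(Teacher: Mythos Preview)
Your proposal is correct and follows essentially the same route as the paper. The paper gives no separate proof of the corollary---it merely notes that the two inequalities are already contained in the proof of Theorem~\ref{thm-estimation}---and the argument there is exactly the Green's-representation-plus-Cauchy--Schwarz computation you outline: the Dirichlet hypothesis kills the double-layer term for $\varphi-H$, the Neumann hypothesis kills the single-layer term for $p-P$, and the mean-value-at-infinity contribution is handled via the $O(|x|^{-1})$ decay from \eqref{electro-osmotic equation}.
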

It can be seen from Corollary \ref{cor} that the errors are minimized when the cost functionals $\mathcal{G}(\zeta_0)$ and $\mathcal{F}(\zeta_0)$ are minimized. We shall verify it with many numerical examples given in Section \ref{sec:NumSim}.

\section{Numerical simulations}\label{sec:NumSim}
In this section, we perform three-dimensional finite-element simulations to validate the theoretical results based on a reduced two-dimensional model with the help of the commercial software COMSOL Multiphysics. The results show good agreement.

Before showing the numerical results, we provide the values of physical and geometrical parameters used in the finite-element simulations, which are summarized in the following table.
\begin{table}[H]
  \centering
  \begin{tabular}{cccc}
\centering
  Physical property & Notation & Value &Units \\
  \hline
  Gap between the plates & $\tilde{h}$ & 15 & $\mu \mathrm{m}$ \\
  Length of computational domain & $\tilde{L}$ & 2 &$\mathrm{mm}$\\
  Width of computational domain& $\tilde{W}$  & 2&$\mathrm{mm}$\\
  Density of fluid & $\tilde{\rho}$ & $10^{3}$ &$\mathrm{Kg}/\mathrm{m}^{3}$\\
  Viscosity of fluid & $\tilde{\mu}$ & $10^{-3}$ &$\mathrm{Pa} \cdot \mathrm{s}$\\
  Permittivity of fluid  & $\tilde{\varepsilon}_m$ & $7.08 \times 10^{-10}$ & $\mathrm{F}/\mathrm{m}$\\
  Electric field far from object& $\tilde{E}$ & $3\times 10^{2}  $ & $\mathrm{V}/\mathrm{m}$\\
  External velocity & $\tilde{u}_{ext}$ & $51$ & $\mu \mathrm{m} / \mathrm{s}$\\
  \hline
\end{tabular}
  \caption{Values of physical and geometrical parameters used in the finite-element simulations.}
\end{table}

We first perform finite-element simulations of the flow around a circular cylinder of radius $100\  \mu \mathrm{m}$ in a Hele-Shaw cell, in which the circular cylinder consists of a circular core of radius $50\  \mu \mathrm{m}$ and a cloaking shell.  The radius of the hydrodynamic cloaking region is $200\  \mu \mathrm{m}$.
Figure \ref{fig:circle} presents a comparison of finite-element simulation results corresponding to an electric field without cloak (a), pressure-driven flow (c) and cloaking (b,d) conditions. Figures \ref{fig:circle}(a)--\ref{fig:circle}(b) present the resulting electrostatic potential distribution (colormap), equipotential lines (black lines) and electric field lines (white lines) without and with an electric cloak design, respectively. As shown in Figure \ref{fig:circle}(a) for the controlling sample, the insulated core causes a strong disturbance of the electrostatic potential and forms a field "shadow" after it. This disturbing effect is completely suppressed by applying a $50 \  \mu \mathrm{m} $--thick electric cloaking shell, as evidenced by the straight potential contour lines right outside the cloak as shown in Figure \ref{fig:circle}(b), showing excellent electric cloaking in the finite-element simulations for the circular cylinder core coated by the shell. Figures \ref{fig:circle}(c)--\ref{fig:circle}(d) present the resulting  pressure distribution (colormap), equipressure lines (black lines) and streamlines (white lines) without and with a hydrodynamic cloak respectively. In Figure \ref{fig:circle}(c),
the no-penetration object causes a pressure shadow on its right side. With a desired  hydrodynamic cloak, as shown in Figure \ref{fig:circle}(d), this disturbance of the flow is totally diminished, as evidenced by the straight equipressure lines, showing excellent hydrodynamic cloaking in the finite-element simulations for the circular cylinder object. Under cloaking conditions (\ref{annulus-cloaking-zeta}), the electric field lines and streamlines outside of the control region are straight, unmodified relative to the uniform far field, and undisturbed by the core and object.
\begin{figure}[H]
	\centering  
	\subfigbottomskip=0pt 
	\subfigcapskip=-10pt 
	\subfigure[]{
		\includegraphics[width=0.23\linewidth]{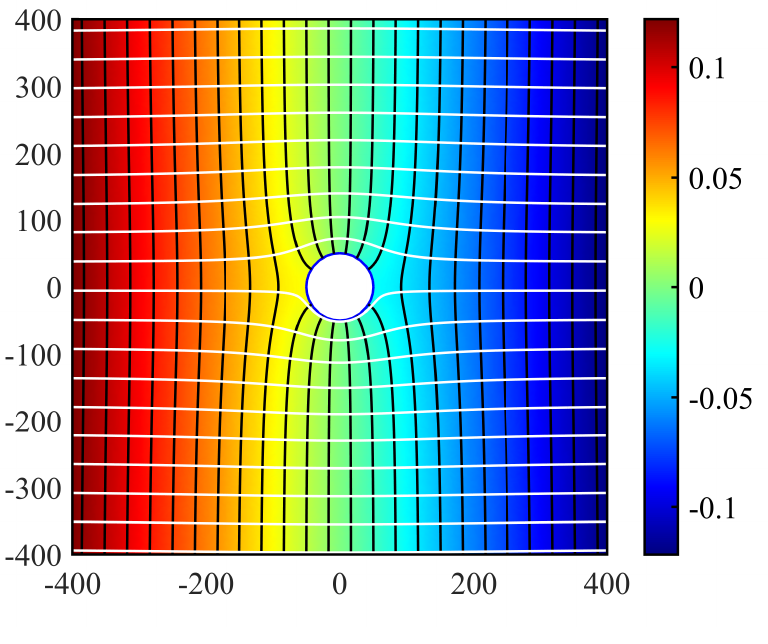}}
	\subfigure[]{
		\includegraphics[width=0.23\linewidth]{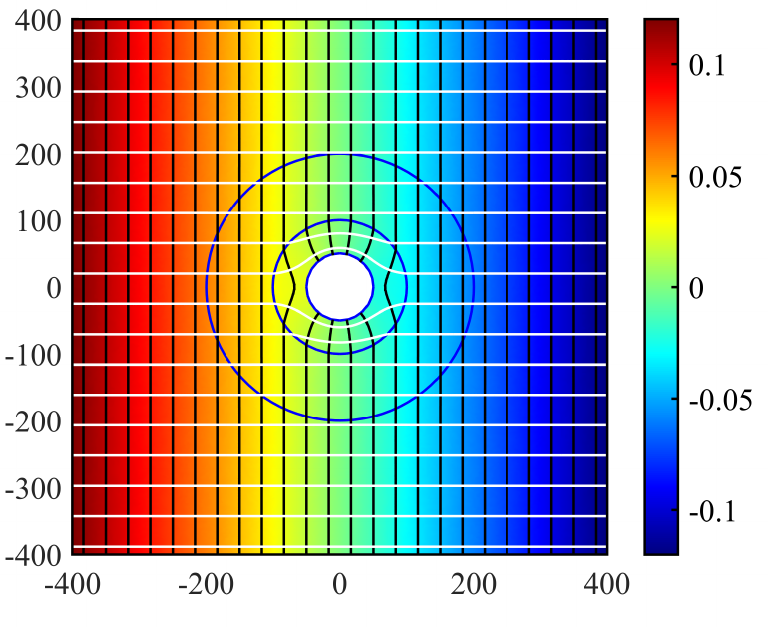}}
	\subfigure[]{
		\includegraphics[width=0.23\linewidth]{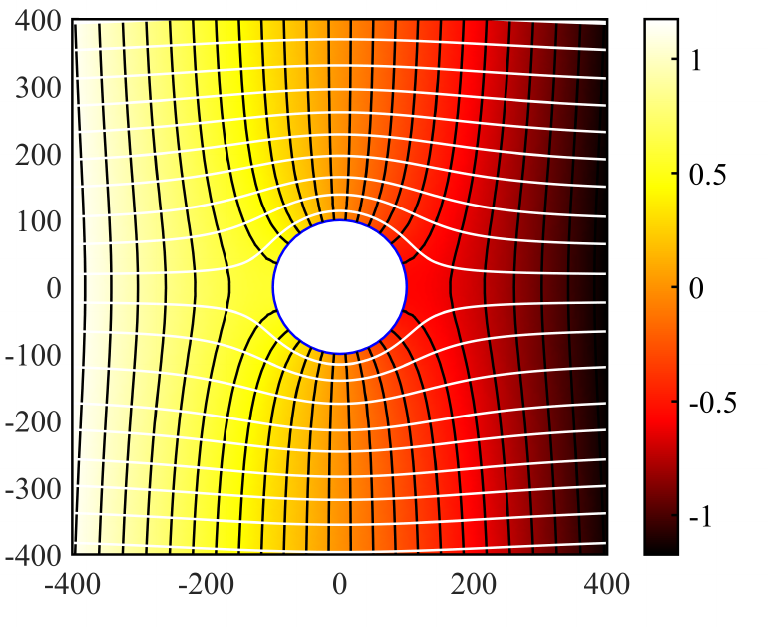}}
	\subfigure[]{
		\includegraphics[width=0.23\linewidth]{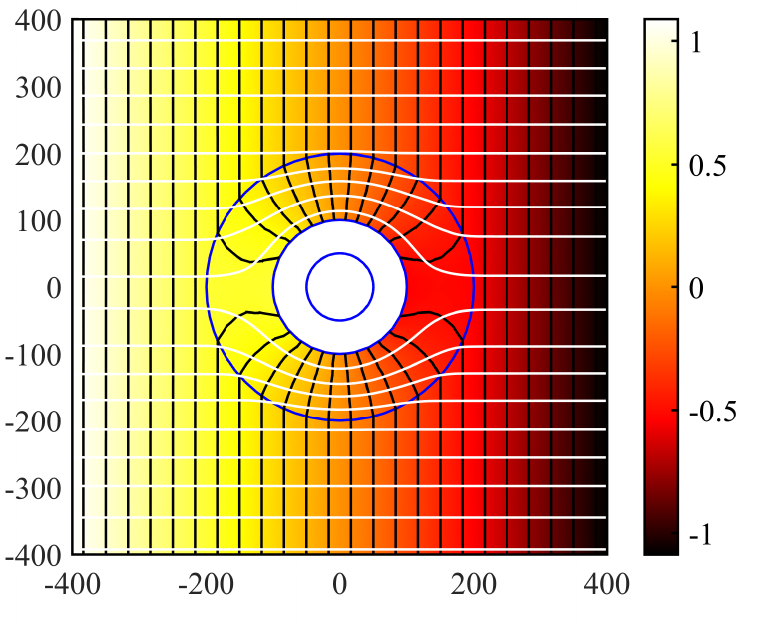}}
	\caption{Comparison of finite-element simulation results on the annulus. Numerical (a,b) results for the electrostatic potential distribution (colormap),  equipotential lines (black lines) and electric field lines (white lines), corresponding to the circular insulation core (a) and electric cloaking (b) conditions. Numerical (c,d) results for the pressure distribution (colormap),  equipressure lines (black lines) and streamlines (white lines), corresponding to pressure-driven flow (c) and hydrodynamic cloaking (d) conditions. Here in the case of electric cloaking, the permittivity is $\varepsilon_s=5/3 \,\varepsilon_m$, and in the case of cloaking the zeta potential is $\tilde{\zeta}_0=-0.16 \,\mathrm{V}$. }\label{fig:circle}
\end{figure}

We next perform finite-element simulations of the flow around an elliptic cylinder in a Hele-Shaw cell, in which the elliptic cylinder consists of an elliptic core coated by a shell.  We consider an elliptic cylinder of boundary $\xi_i$ wrapped by a region of interior and exterior boundaries $\xi_i$ and $\xi_e$, choosing $\xi_i= 0.5$ and $\xi_e= 1$, where the elliptic radii are normalized by the characteristic length $100 \  \mu \mathrm{m}$.  It is remarked that the inner and outer ellipses are of the same focus. We consider a uniform electric field and velocity externally applied in the $\tilde{x}_1$ direction. The zeta potentials $\tilde{\zeta}_0$ of the cloaking region depend on the equation (\ref{ellipse-cloaking-zeta-x}) and the characteristic value of $\tilde{\zeta}_0$. Figures \ref{fig:ellipse}(a) and \ref{fig:ellipse}(b) present the resulting electric field distribution (colormap), equipotential lines (black lines) and electric field lines (white lines).  Figures \ref{fig:ellipse}(c) and \ref{fig:ellipse}(d) present the resulting pressure distribution (colormap) and streamlines (white lines). Analogously, when the background electric field and velocity are externally applied in the $\tilde{x}_2$ direction, under conditions  (\ref{ellipse-cloaking-zeta-y}) we can obtain numerical results illustrated in Figures \ref{fig:ellipse-y}(a)--\ref{fig:ellipse-y}(d).  These results also show excellent agreement like the elliptic cylinder case. The performance of the proposed simultaneous electric and hydrodynamic cloaking conditions has been numerically confirmed.
\begin{figure}[H]
	\centering  
	\subfigbottomskip=0pt 
	\subfigcapskip=-10pt 
	\subfigure[]{
		\includegraphics[width=0.23\linewidth]{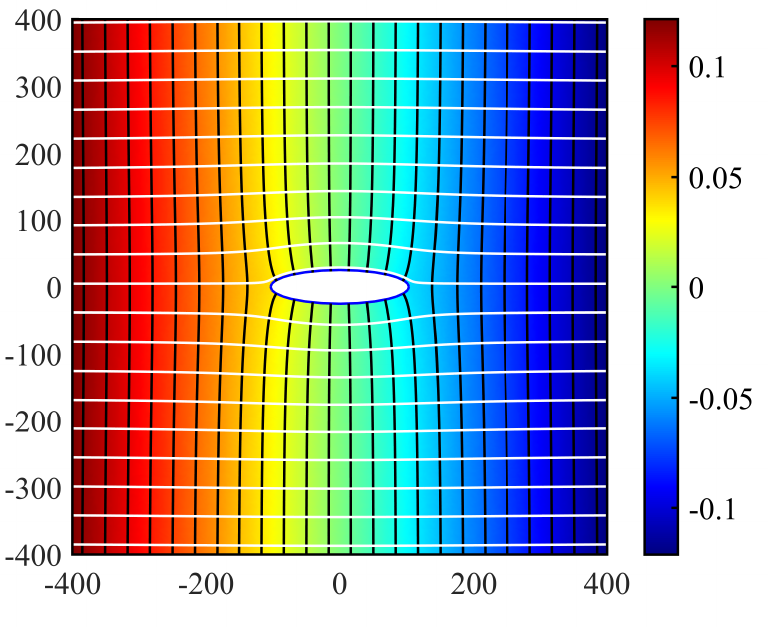}}
	\subfigure[]{
		\includegraphics[width=0.23\linewidth]{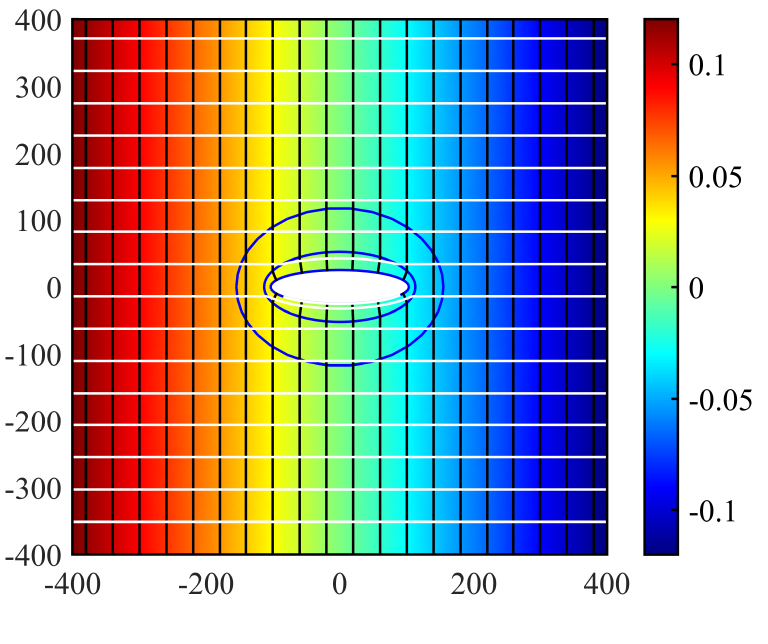}}
	\subfigure[]{
		\includegraphics[width=0.23\linewidth]{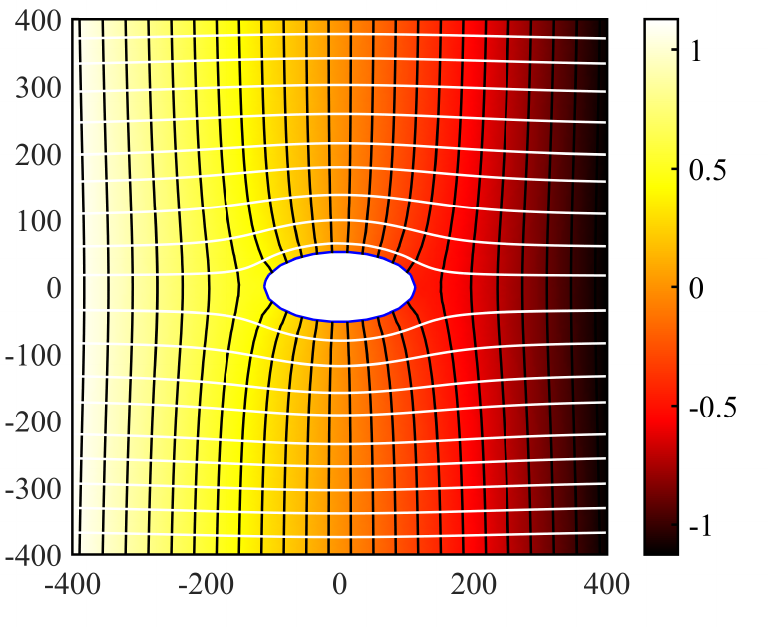}}
	\subfigure[]{
		\includegraphics[width=0.23\linewidth]{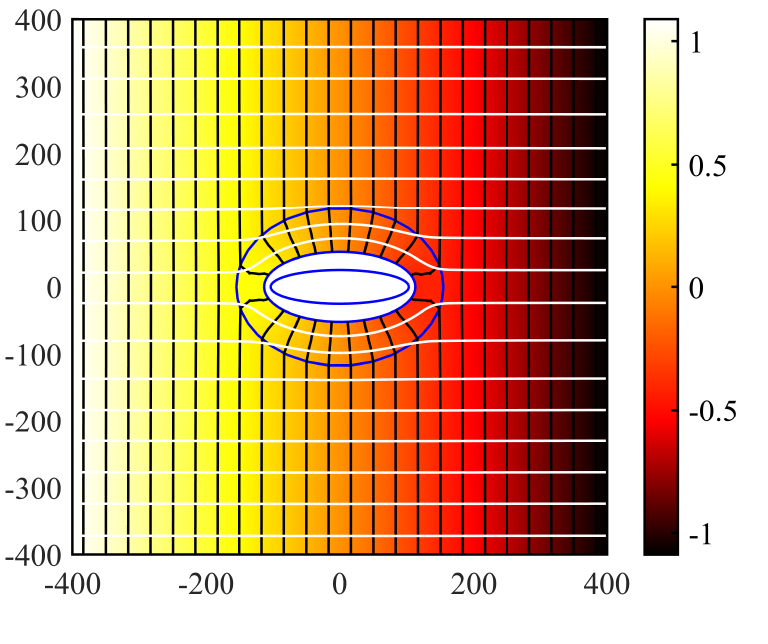}}
	\caption{Comparison of finite-element simulation results on the confocal ellipses with background field in the $\tilde{x}_1$. Numerical (a,b) results for the electrostatic potential distribution (colormap),  equipotential lines (black lines) and electric field lines (white lines), corresponding to the elliptic insulation core (a) and electric cloaking (b) conditions. Numerical (c,d) results for the pressure distribution (colormap),  equipressure lines (black lines) and streamlines (white lines), corresponding to pressure-driven flow (c) and hydrodynamic cloaking (d) conditions. Here in the case of electric cloaking, the permittivity is $\varepsilon_s=1.887 \,\varepsilon_m$, and in the case of cloaking the zeta potential is $\tilde{\zeta}_0=-0.1555 \,\mathrm{V}$. }\label{fig:ellipse}
\end{figure}

\vspace{-0.5cm}

\begin{figure}[H]
	\centering  
	\subfigbottomskip=-10pt 
	\subfigcapskip=-10pt 
	\subfigure[]{
		\includegraphics[width=0.23\linewidth]{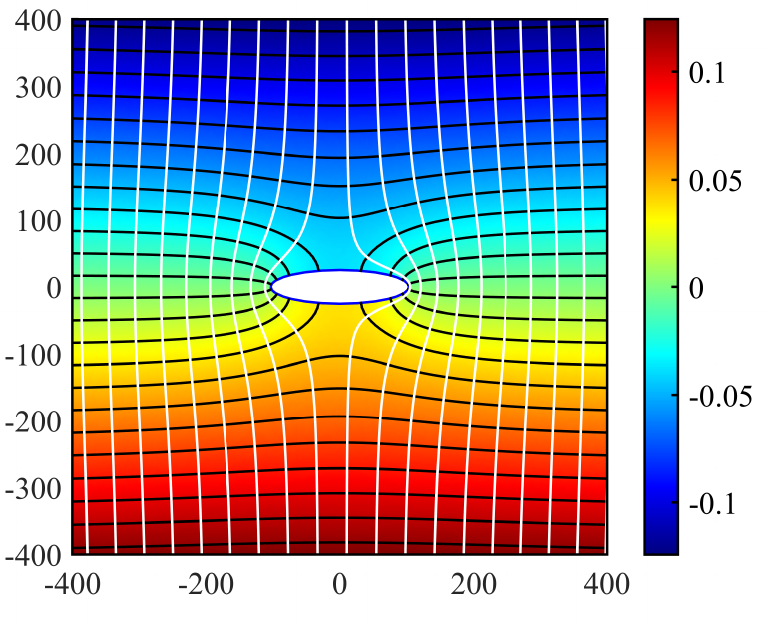}}
	\subfigure[]{
		\includegraphics[width=0.23\linewidth]{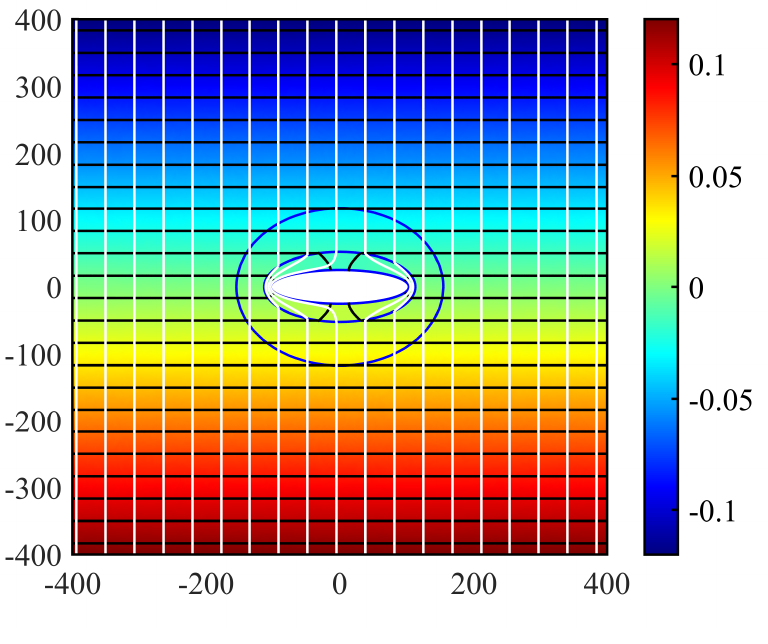}}
	\subfigure[]{
		\includegraphics[width=0.23\linewidth]{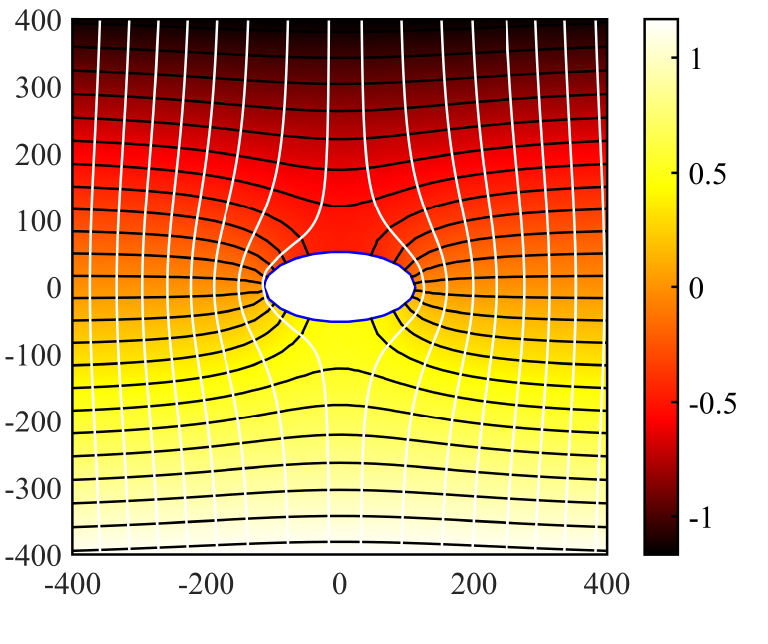}}
	\subfigure[]{
		\includegraphics[width=0.23\linewidth]{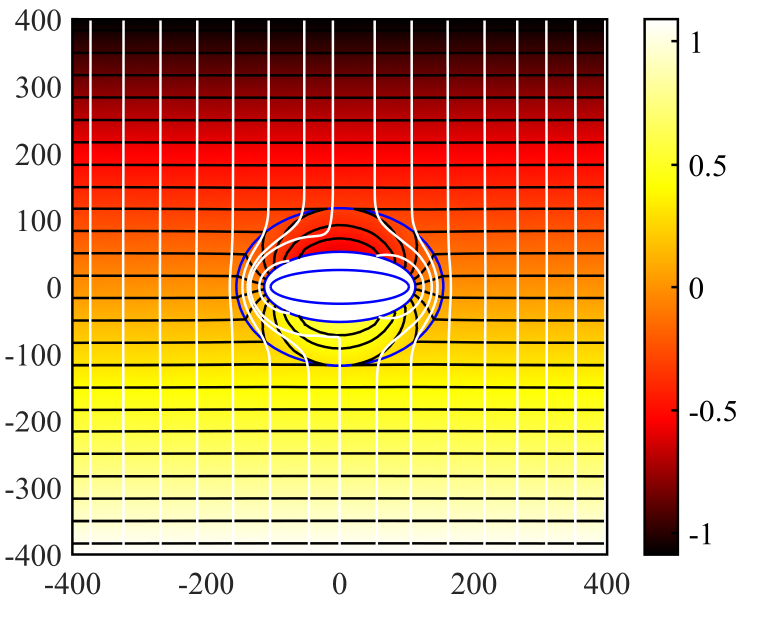}}
	\caption{Comparison of finite-element simulation results on the confocal ellipses with background field in the $\tilde{x}_2$. Numerical (a,b) results for the electrostatic potential distribution (colormap),  equipotential lines (black lines) and electric field lines (white lines), corresponding to the circular insulation  core (a) and electric cloaking (b) conditions. Numerical (c,d) results for the pressure distribution (colormap),  equipressure lines (black lines) and streamlines (white lines), corresponding to pressure-driven flow (c) and hydrodynamic cloaking (d) conditions. Here in the case of electric cloaking, the permittivity is $\varepsilon_s=8.8354 \,\varepsilon_m$, and in the case of cloaking the zeta potential is $\tilde{\zeta}_0=-0.4419 \,\mathrm{V}$. }\label{fig:ellipse-y}
\end{figure}

In Figure \ref{fig:thin}, we consider thin cloaking region for annulus and confocal ellipses with $r_o =0.9$, $r_i =1$,   $r_e =1.1$, $\xi_o =0.4$, $\xi_i =0.5$ and $\xi_e =0.6$, which are normalized by the characteristic length $100 \  \mu \mathrm{m}$. These results show that thin regions can also have excellent electric and hydrodynamic cloaking. However, these regions are not so thin that meta-surfaces occur because the cloaking conditions are singular.
\begin{figure}[H]
	\centering  
	\subfigbottomskip=0pt 
	\subfigcapskip=-10pt 
	\subfigure[]{
		\includegraphics[width=0.25\linewidth]{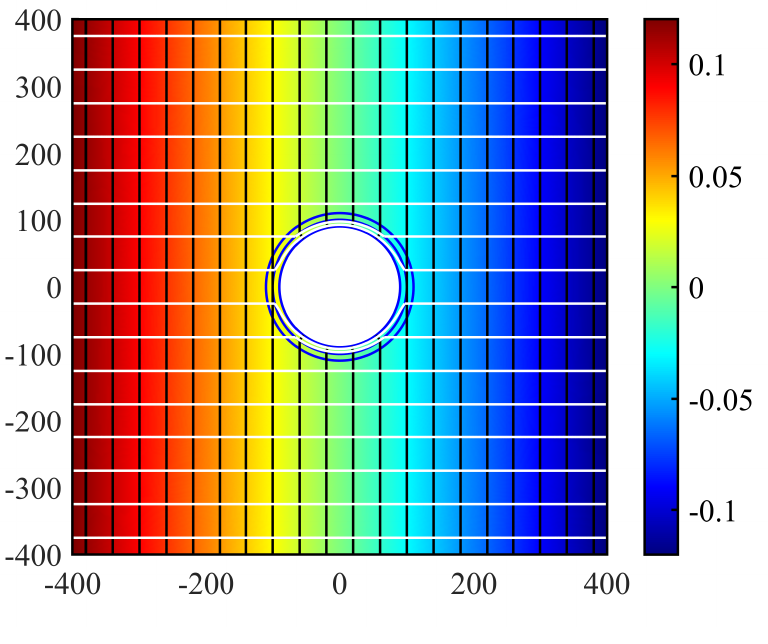}}
	\quad
	\subfigure[]{
		\includegraphics[width=0.25\linewidth]{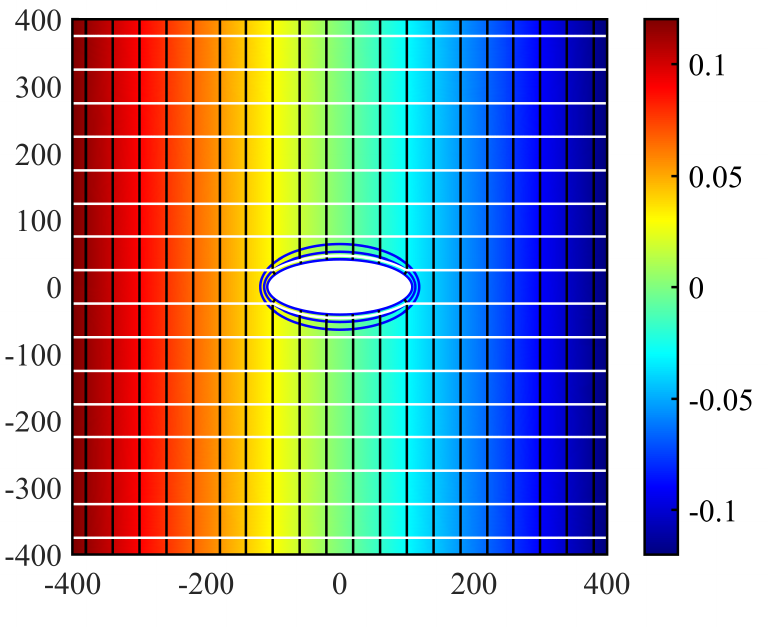}}
	\quad
	\subfigure[]{
		\includegraphics[width=0.25\linewidth]{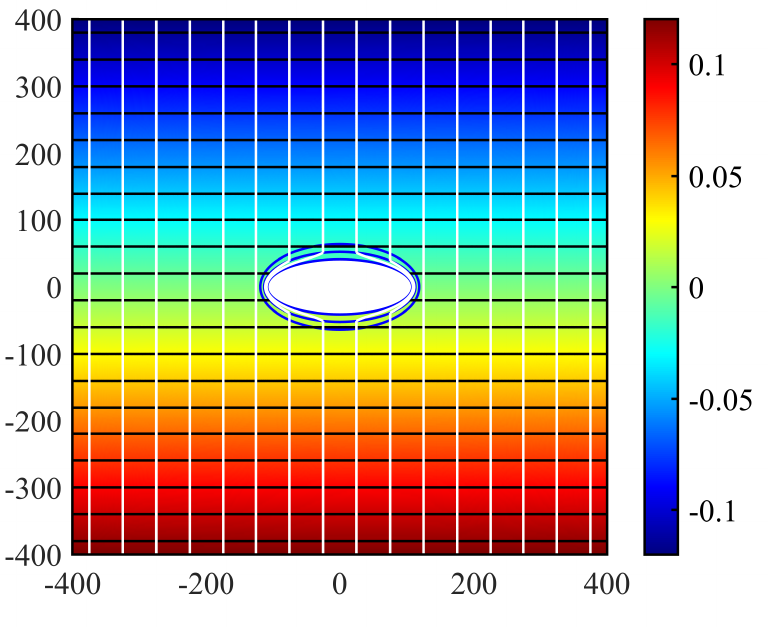}}\\
	\subfigure[]{
		\includegraphics[width=0.25\linewidth]{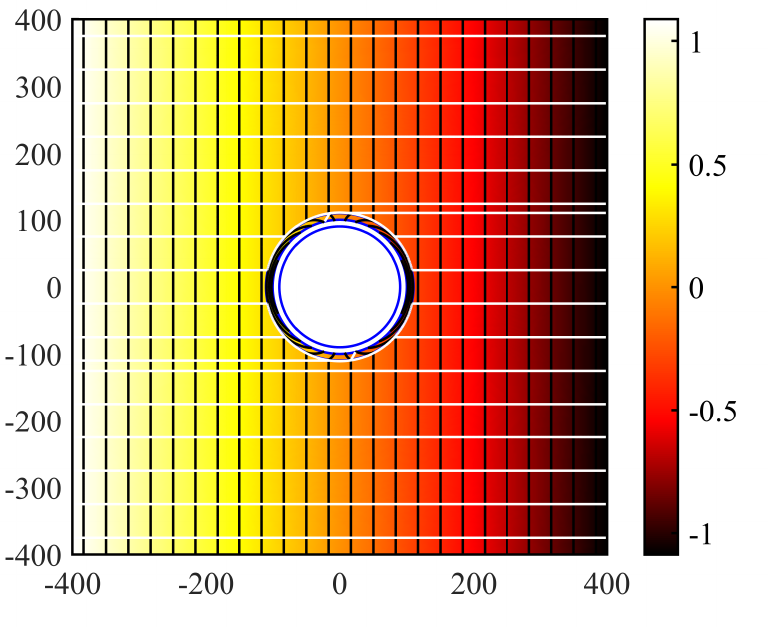}}
	\quad
	\subfigure[]{
		\includegraphics[width=0.25\linewidth]{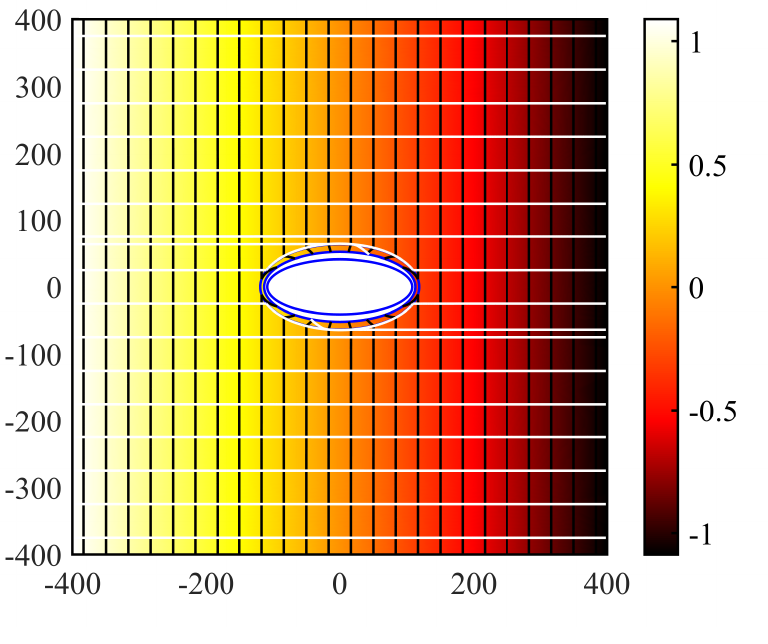}}
	\quad
	\subfigure[]{
		\includegraphics[width=0.25\linewidth]{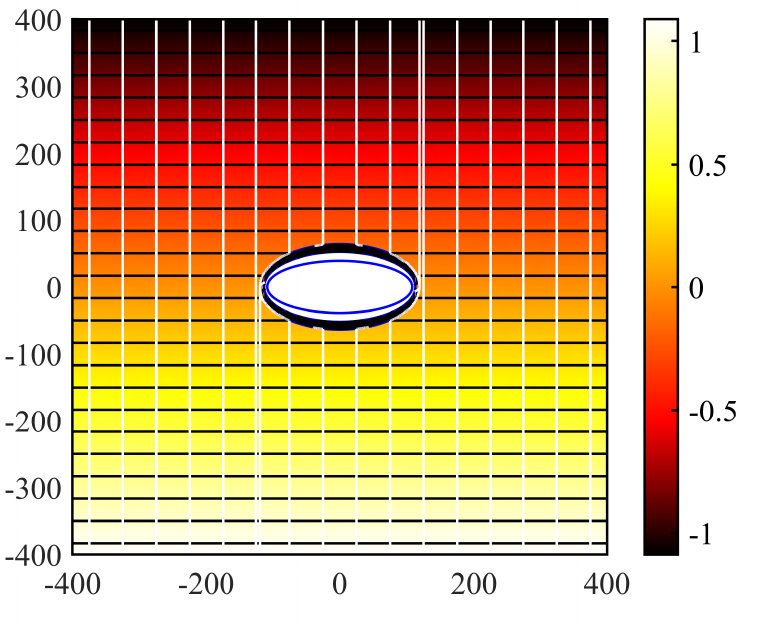}}
	\caption{Comparison of finite-element simulation results with thin cloaking region. The permittivities are $\varepsilon_s=9.5263 \,\varepsilon_m (a), 4.6366 \,\varepsilon_m (b) $, and $21.7116\,\varepsilon_m (c)$. The zeta potentials are $\tilde{\zeta}_0 = -2.2857\ \mathrm{V} (d), -0.8777  \ \mathrm{V} (e)$ and $-4.2438  \ \mathrm{V} (f)$, respectively.}\label{fig:thin}
\end{figure}
\vspace{-0.5cm}
\begin{figure}[htbp]
	\centering  
	\subfigbottomskip=0pt 
	\subfigcapskip=-10pt 
	\subfigure[]{
		\includegraphics[width=0.25\linewidth]{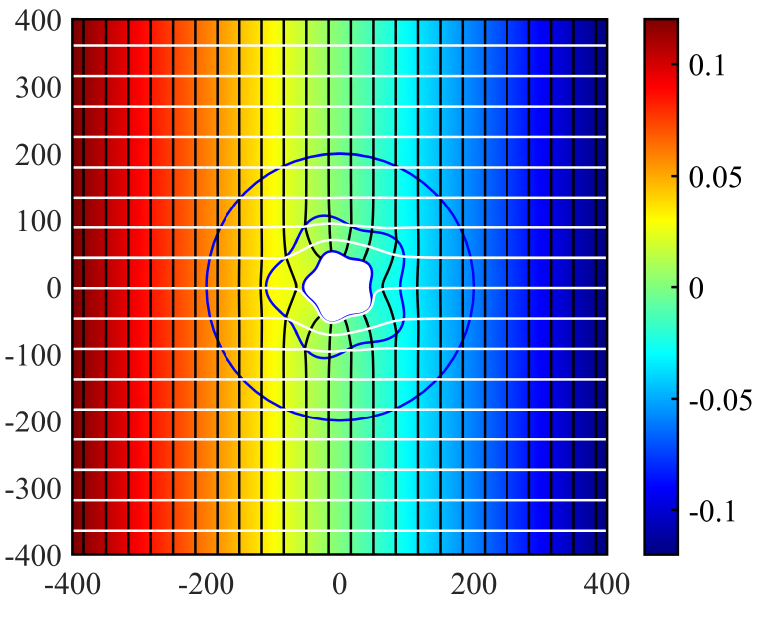}}
	\quad
	\subfigure[]{
		\includegraphics[width=0.25\linewidth]{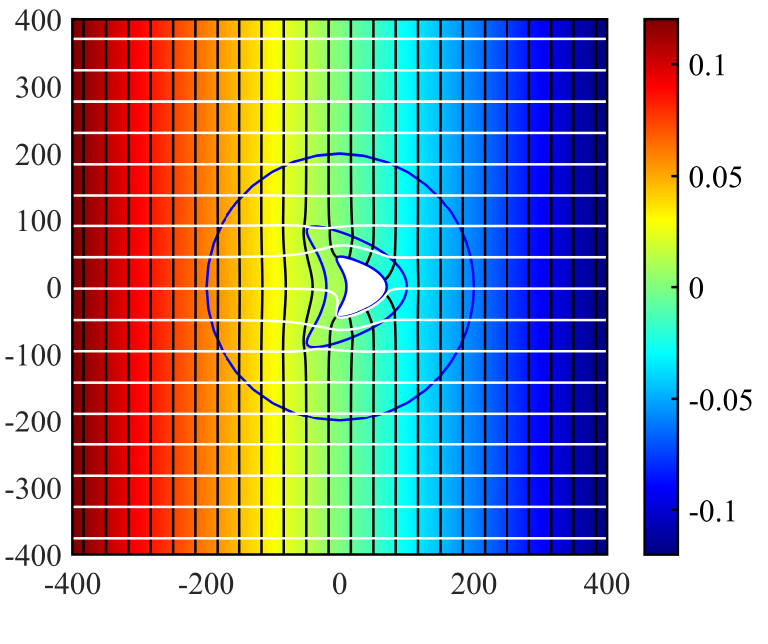}}
	\quad
	\subfigure[]{
		\includegraphics[width=0.25\linewidth]{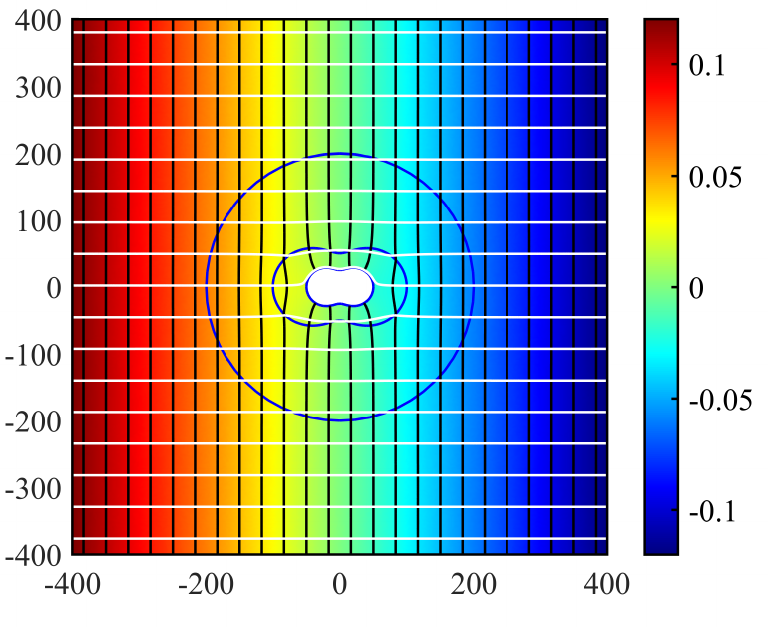}}\\
	\subfigure[]{
		\includegraphics[width=0.25\linewidth]{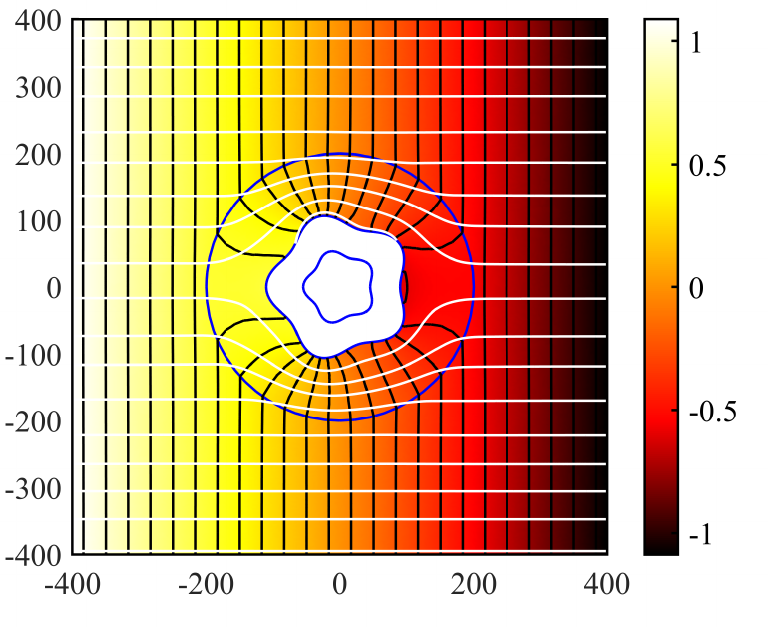}}
	\quad
	\subfigure[]{
		\includegraphics[width=0.25\linewidth]{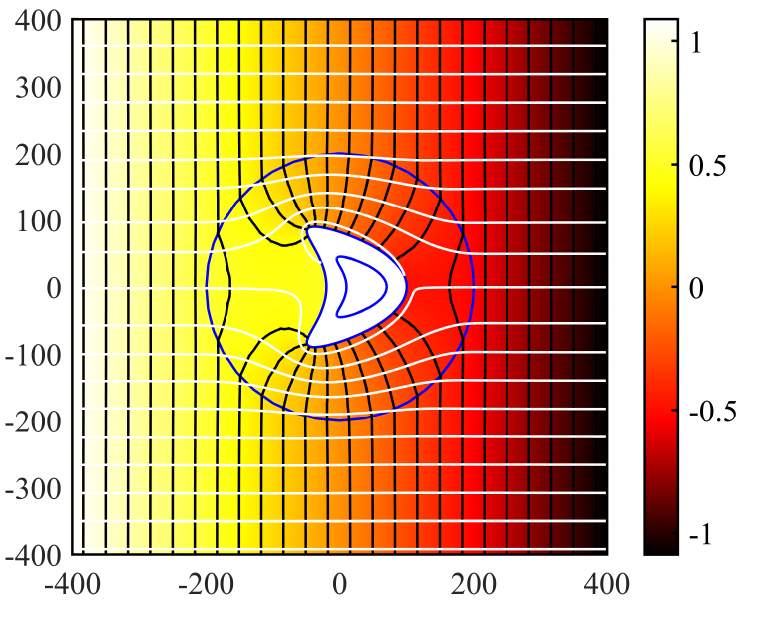}}
	\quad
	\subfigure[]{
		\includegraphics[width=0.25\linewidth]{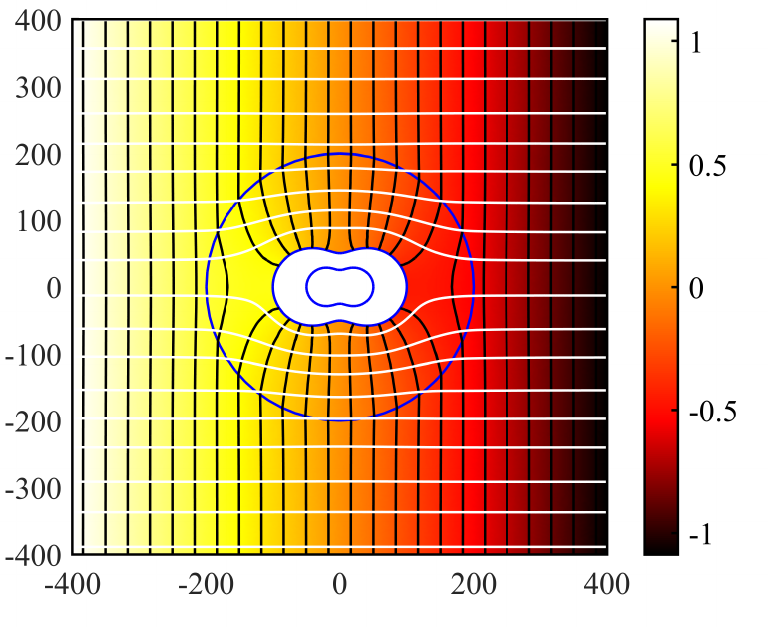}}
	\caption{Comparison of finite-element simulation results for some objects with regular boundaries. Numerical (a-f) results for the electric, pressure distribution (colormap) and electric field lines, streamlines (white lines) corresponding to electric cloaking (a-c) and hydrodynamic cloaking (d-f). The permittivities are $\varepsilon_{s,opt}=1.71 \,\varepsilon_m (a), 1.92 \,\varepsilon_m (b) $, and $1.58 \,\varepsilon_m (c)$. The zeta potentials are $\tilde{\zeta}_{0, opt} = -0.19\ \mathrm{V} (d), -0.1225  \ \mathrm{V} (e)$ and $-0.0809  \ \mathrm{V} (f)$, respectively. The cloaking regions are circles with a radius of $200 \ \mu \mathrm{m}$.}\label{fig:regular-shape}
\end{figure}

For a numerical example, we consider the flow of a cylinder with a flower-shaped cross-section with boundary described by the parametric representation
\begin{equation*}
  x(t) = 1-0.1\cos 5t, \quad 0\leq t \leq 2\pi.
\end{equation*}
Extending our analysis to more complex shapes, we also considered the case of a non-convex kite-shaped object parameterized by
\begin{equation*}
  x(t) = (0.6\cos t + 0.39\cos 2t + 0.01, 0.9\sin t), \quad 0\leq t \leq 2\pi,
\end{equation*}
and a peanut-shaped object parameterized by
\begin{equation*}
  x(t) = \sqrt{\cos^2 t + 0.25 \sin^2 t}, \quad 0\leq t \leq 2\pi.
\end{equation*}
We assume that the boundary curve $\p B$ of the core is conformal to $\p D$, but its size is reduced by half.
When these objects are surrounded by an appropriate circle, we observe that good cloaking occurs in Figure \ref{fig:regular-shape}.
As for the following cases about corners, the assumption on the conformal relation of $\p B$ and $\p D$ still holds.

Without loss of generality, we confine our presentation to objects with boundary curve $\p D$ with some corners.  We consider some special shapes, for instance, triangles, squares and pentagons inscribed in the circle of radius $100 \ \mu \mathrm{m}$.   Here the cloaking region is a circle of radius $200 \ \mu \mathrm{m}$. Figure \ref{fig:corner-shape} shows good cloaking. These permittivities and zeta potentials used in finite-element numerical simulations are calculated by the optimization method in Subsection \ref{subsec:optimal}. They are scaled by the characteristic value of $\tilde{\zeta}_0$.

\begin{figure}[H]
	\centering  
	\subfigbottomskip=0pt 
	\subfigcapskip=-10pt 
	\subfigure[]{
		\includegraphics[width=0.25\linewidth]{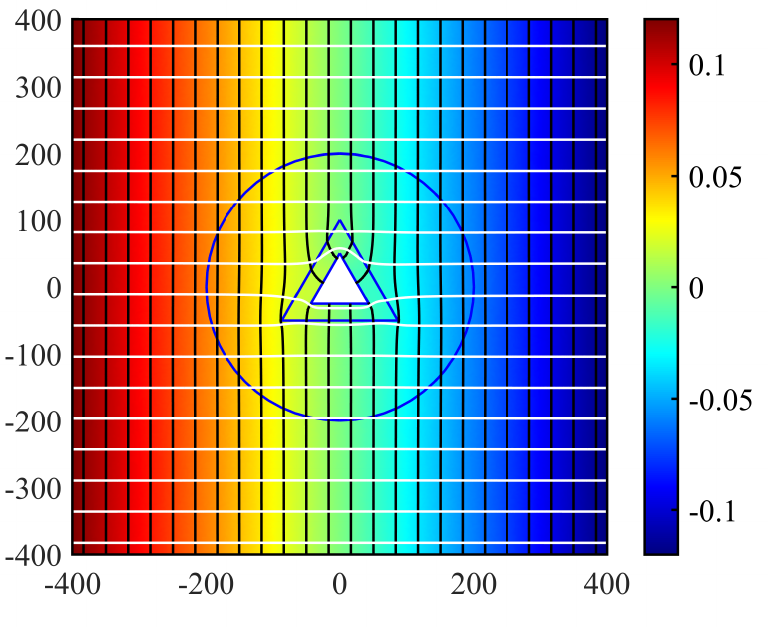}}
	\quad
	\subfigure[]{
		\includegraphics[width=0.25\linewidth]{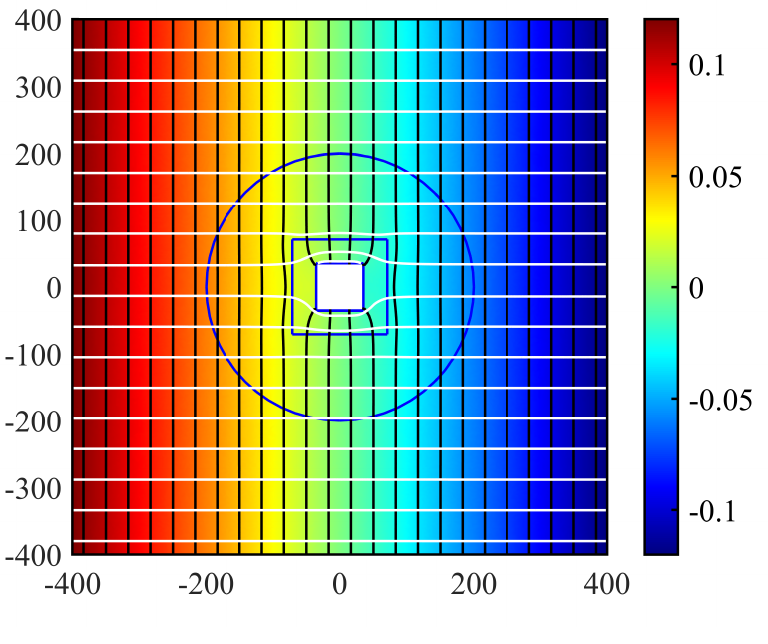}}
	\quad
	\subfigure[]{
		\includegraphics[width=0.25\linewidth]{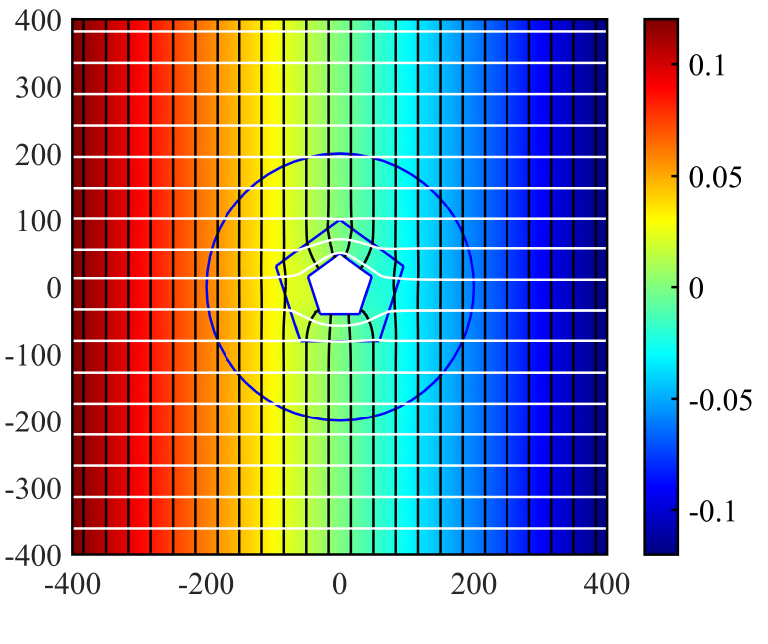}}\\
	\subfigure[]{
		\includegraphics[width=0.25\linewidth]{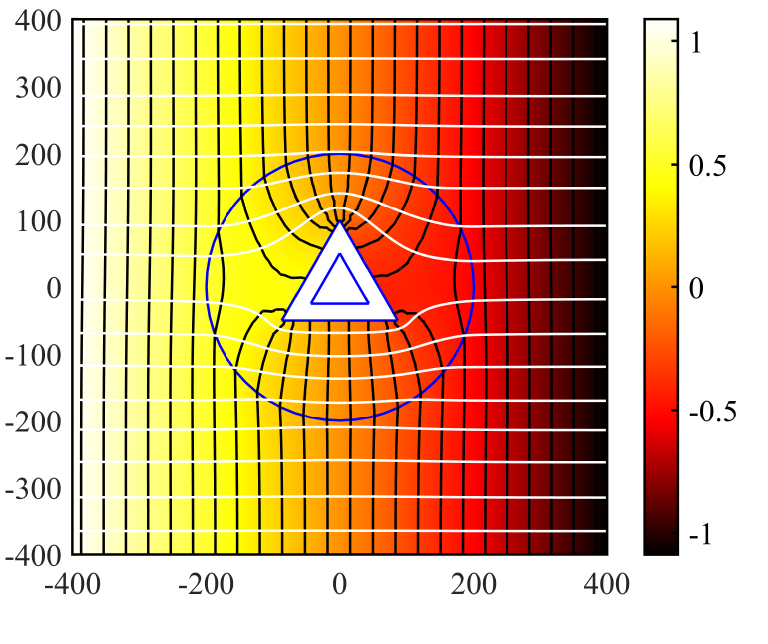}}
	\quad
	\subfigure[]{
		\includegraphics[width=0.25\linewidth]{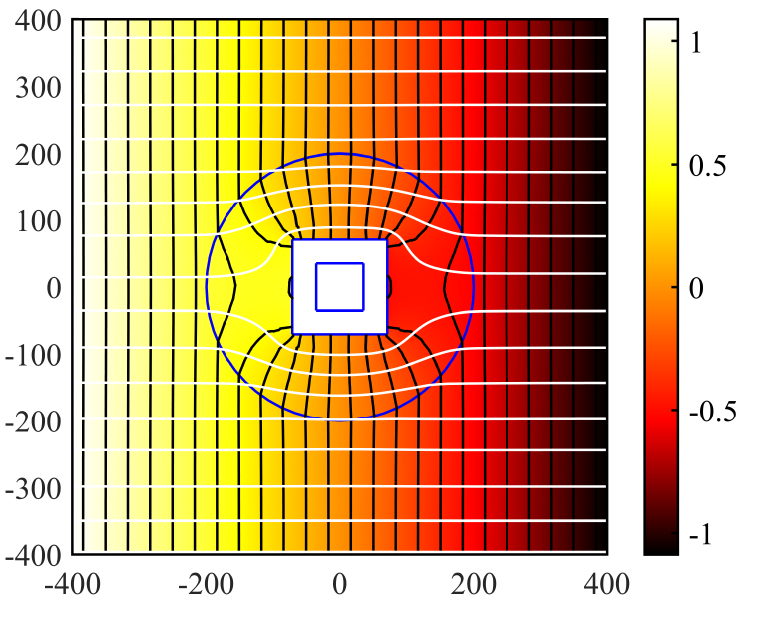}}
	\quad
	\subfigure[]{
		\includegraphics[width=0.25\linewidth]{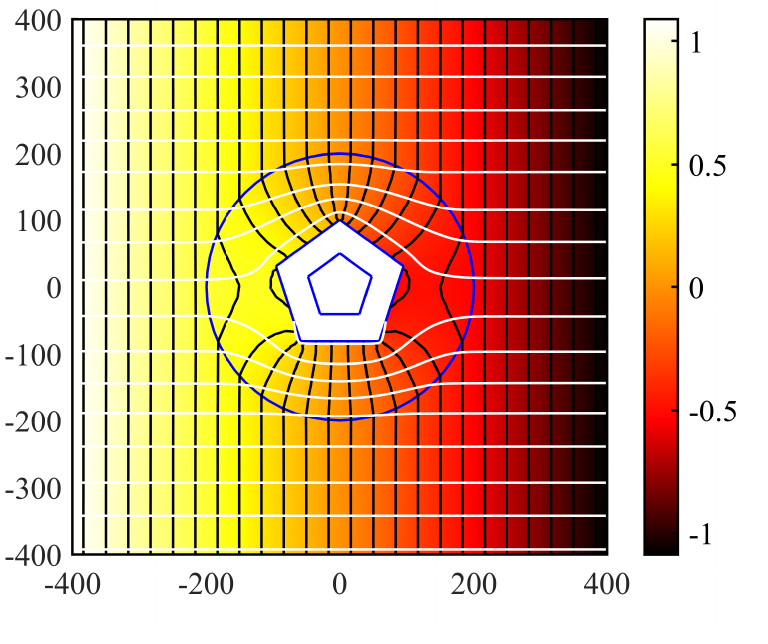}}
	\caption{Comparison of finite-element simulation results for some objects with corners. Numerical (a-f) results for the electric, pressure distribution (colormap) and electric field lines, streamlines (white lines) corresponding to electric cloaking (a-c) and hydrodynamic cloaking (d-f). The permittivities are $\varepsilon_{s,opt}=1.95 \,\varepsilon_m (a), 1.72 \,\varepsilon_m (b) $, and $1.68 \,\varepsilon_m (c)$.  The zeta potentials are $\tilde{\zeta}_{0, opt} = -0.091\ \mathrm{V} (d), -0.119  \ \mathrm{V} (e)$ and $-0.136  \ \mathrm{V} (f)$, respectively. }\label{fig:corner-shape}
\end{figure}

 We extend numerical simulations by investigating the possibility of cloaking multiple objects placed in close proximity to each other. Figure \ref{fig:cloaking-multi-circle-ellipse} shows good cloaking for the combination of circular and elliptical objects. Here excellent cloaking still exists.


\begin{figure}[H]
	\centering  
	\subfigbottomskip=0pt 
	\subfigcapskip=-10pt 
	\subfigure[]{
		\includegraphics[width=0.25\linewidth]{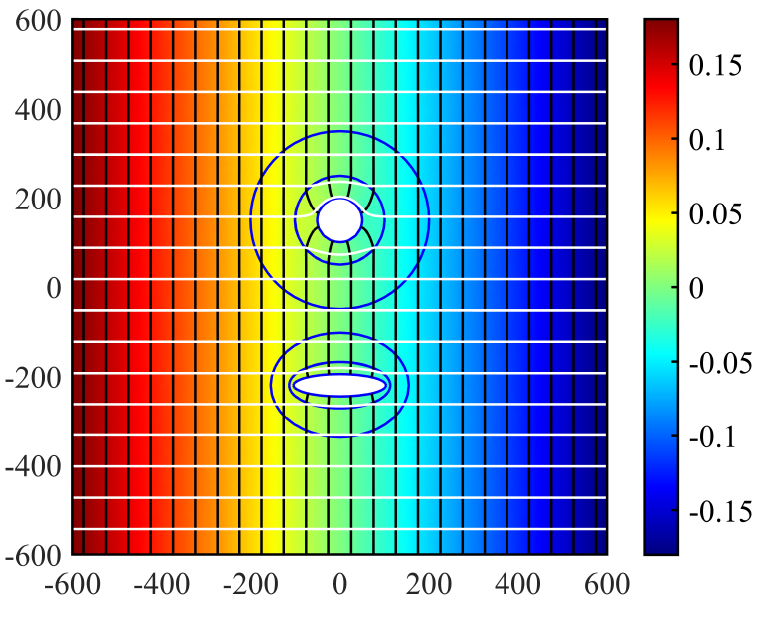}}
	\quad
	\subfigure[]{
		\includegraphics[width=0.25\linewidth]{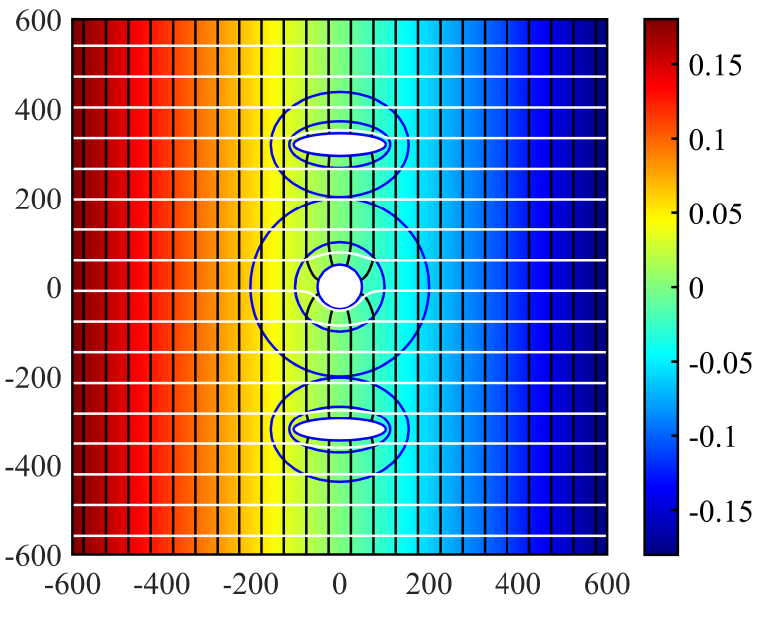}}
	\quad
	\subfigure[]{
		\includegraphics[width=0.25\linewidth]{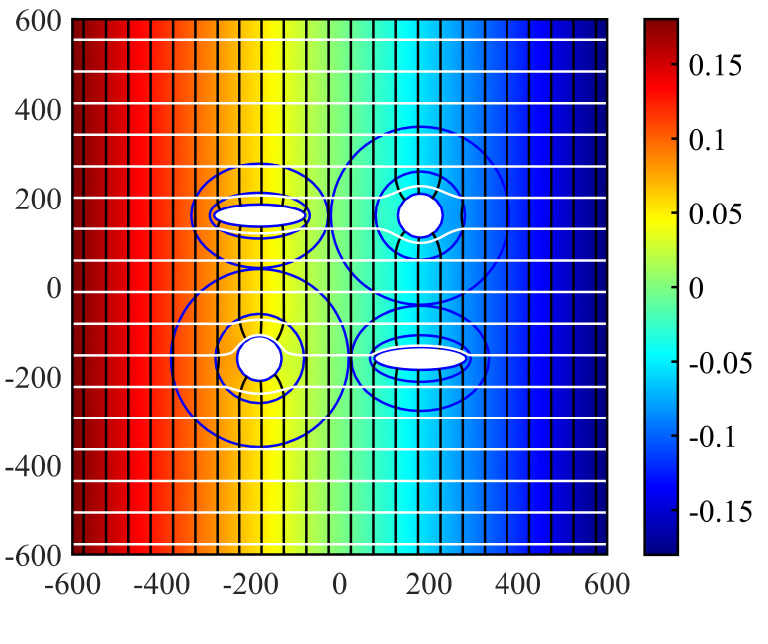}}\\
	\subfigure[]{
		\includegraphics[width=0.25\linewidth]{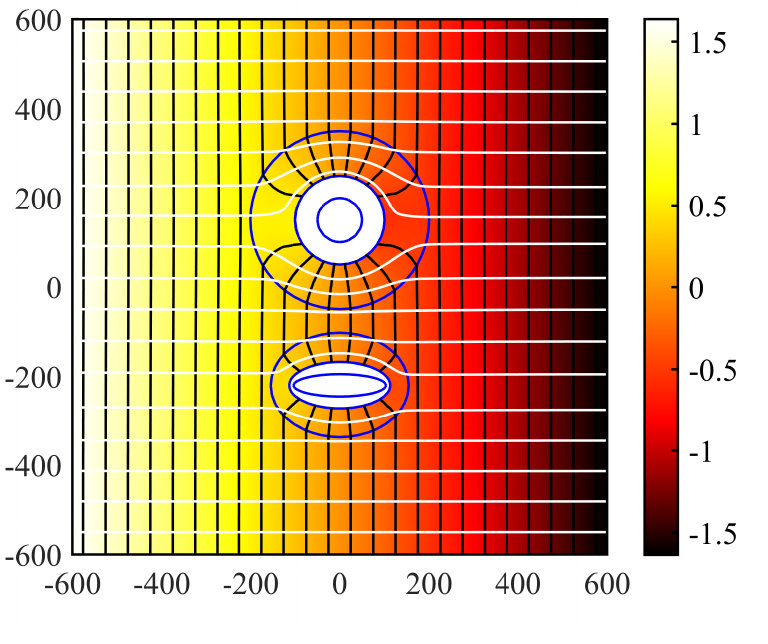}}
	\quad
	\subfigure[]{
		\includegraphics[width=0.25\linewidth]{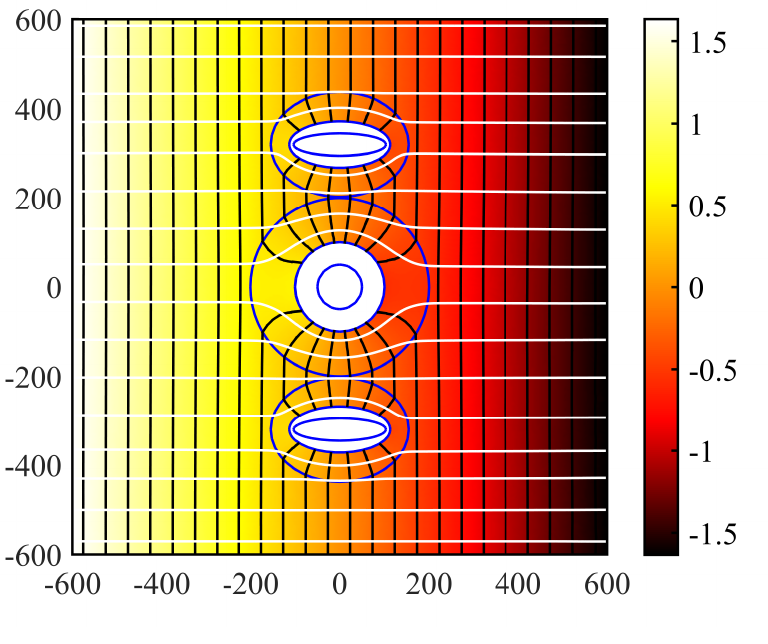}}
	\quad
	\subfigure[]{
		\includegraphics[width=0.25\linewidth]{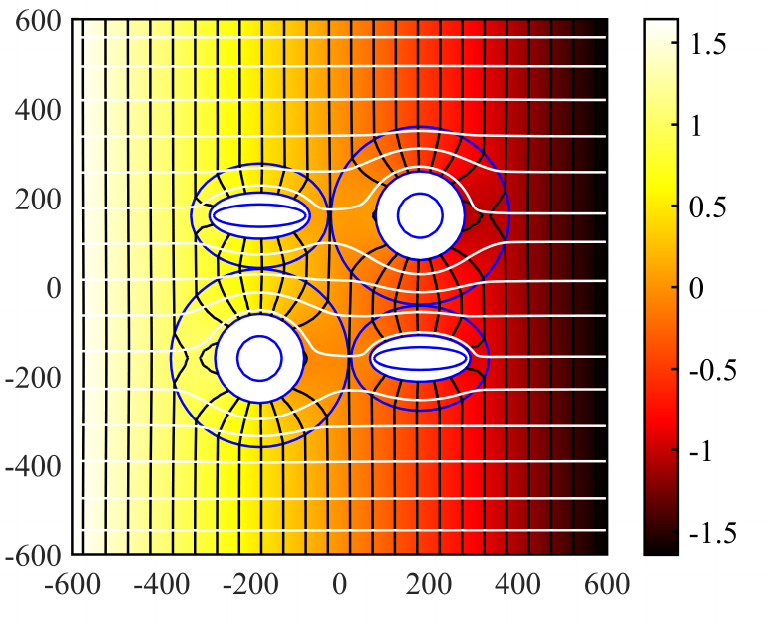}}
	\caption{Simultaneous cloaking for the combination of circular and elliptic cylinder objects. The parameters are the same as that of Figures \ref{fig:circle} and \ref{fig:ellipse}.  }\label{fig:cloaking-multi-circle-ellipse}
\end{figure}

\section{Conclusions}
In this paper, we have established a mathematical framework to design coupled multiphysics cloaking in a Hele-Shaw cell through a combination of scattering-cancellation technology and an electro-osmosis effect. The framework allows us to derive the simultaneous electric and hydrodynamic cloaking conditions for special and  general geometries. In addition to the theoretical results, extensive numerical experiments were conducted to corroborate the theoretical findings. We would like to emphasize that the material parameters needed in our design for each layer are homogeneous, isotropic and metamaterial-less, which could be achieved by natural material. Our study can provide a promising method for feasible and flexible control of multiphysics processes. These results and techniques of this paper can be immediately extended to the illusion case. The new development will be reported in our forthcoming works.

\section*{Acknowledgement}
The research of H. Liu was supported by NSFC/RGC Joint Research Scheme, N CityU101/21, ANR/RGC
Joint Research Scheme, A-CityU203/19, and the Hong Kong RGC General Research Funds (projects 12302919,
12301420 and 11300821). The research of G. Zheng was supported by the NSF of China (12271151), NSF of
Hunan (2020JJ4166) and NSF Innovation Platform Open Fund project of Hunan Province (20K030).

\begin{thebibliography}{99}
\bibitem{Alù2005}
{\sc A. Al\`{u} and N. Engheta},
{\em Achieving transparency with plasmonic and metamaterial coatings}, {\sl Phys. Rev. E.} 72 (2005), 016623.

\bibitem{Ammari2007}
{\sc H. Ammari and H. Kang},
{\em Polarization and Moment Tensors with Applications to Inverse Problems and Effective Medium Theory}, (Springer-Verlag, New York, 2007).

\bibitem{ammari2013}
{\sc H. Ammari, H. Kang, H. Lee and M. Lim},
{\em Enhancement of approximate-cloaking. Part II: The Helmholtz equation}, {\sl Comm. Math. Phys.} 317 (2013), 485--502.

\bibitem{Ammari2013}
{\sc H. Ammari, G. Ciraolo, H. Kang, H. Lee and G. Milton},
{\em Spectral theory of a Neumann-Poincar\'{e}-type operator and analysis of cloaking due to anomalous localized resonance}, {\sl Arch. Rational Mech. Anal.} 208 (2013), 667--692.

\bibitem{Ando2016}
{\sc K. Ando and H. Kang},
{\em Analysis of plasmon resonance on smooth domains using spectral properties of the Neumann–Poincar\'{e} operator}, {\sl Jour. Math. Anal. Appl.} 435 (2016), 162--178.

\bibitem{bao2014}
{\sc G. Bao, H. Liu and J. Zou}, {\em Nearly cloaking the full Maxwell equations: cloaking active contents with general conducting layers}, {\sl J. Math. Pures Appl.} 101 (2014), 716--733.

\bibitem{Boyko2021}
{\sc E. Boyko, V. Bacheva, M. Eigenbrod, F. Paratore, A. Gat, S. Hardt and M. Bercovici},
{\em Microscale hydrodynamic cloaking and shielding via electro-osmosis}, {\sl Phys. Rev. Lett}. 126 (2021), 184502.

\bibitem{Chen2012}
{\sc P. Chen, J. Soric and A. Al\`{u}},
 {\em Invisibility and cloaking based on scattering cancellation}, {\sl Adv. Mater.} 24 (2012), OP281COP304.

\bibitem{Chung2014}
{\sc D. Chung, H. Kang, K. Kim and H. Lee},
{\em Cloaking due to anomalous localized resonance in plasmonic structures of confocal ellipses}, {\sl SIAM J. Appl. Math.} 74 (2014), 1691--1707.

\bibitem{Craster2017}
{\sc R. Craster, S. Guenneau, H. Hutridurga and G. Pavliotis},
{\em Cloaking via Mapping for the Heat Equation}, {\sl Multiscale Model. Simul.}, 16 (2017), 1146-1174.

\bibitem{deng2017}
{\sc Y. Deng, H. Liu and G. Uhlmann}, {\em On regularized full- and partial-cloaks in acoustic scattering}, {\sl Commun. Part. Differ. Equ.} 42 (2017), 821--851.

\bibitem{deng2017(1)}
{\sc Y. Deng, H. Liu and G. Uhlmann},
{\em Full and partial cloaking in electromagnetic scattering}, {\sl Arch. Ration. Mech. Anal.} 223 (2017), 265--299.

\bibitem{Fujii2019}
{\sc G. Fujii and Y. Akimoto},
{\em Optimizing the structural topology of bifunctional invisible cloak manipulating heat flux and direct current}, {\sl Appl. Phys. Lett.} 115 (2019) 174101.

\bibitem{Fujii2022}
{\sc G. Fujii and Y. Akimoto}, {\em Electromagnetic--acoustic biphysical cloak designed through topology optimization}, {\sl Optics Express}, 30(4) (2022), 6090-6106.

\bibitem{greenleaf2009}
{\sc A. Greenleaf, Y. Kurylev, M. Lassas and G. Uhlmann},
{\em Cloaking devices, electromagnetic wormholes and transformation optics}, {\sl SIAM Review} 51 (2009), 3-33.

%

\bibitem{Kellogg1953}
{\sc O.D. Kellogg},
{\em Foundations of Potential Theory}, (Dover, New York, 1953).

\bibitem{Kohn2010}
{\sc R. Kohn, D. Onofrei, M. Vogelius and M. Weinstein},
{\em Cloaking via change of variables for the helmholtz equation}, {\sl Comm. Pure Appl. Math.}, 63 (2010), 973--1016.

\bibitem{Kress1989}
{\sc R. Kress},
{\em Linear Integral Equations}, (Springer-Verlag, New York, 1989).

\bibitem{Leonhardt2006}
{\sc U. Leonhardt}, {\em Optical conformal mapping}, {\sl Science}, 312 (2006), pp. 1777--1780.

\bibitem{Lan2016}
{\sc C. Lan, K. Bi, Z. Gao, B. Li and J. Zhou}; {\em Achieving bifunctional cloak via combination of passive and active schemes}, {\sl Appl. Phys. Lett.} 14 (2016), 109 (20): 201903.

\bibitem{li2010}
{\sc J. Li, Y. Gao, and J. Huang},
{\em A bifunctional cloak using transformation media},{\sl J. Appl. Phys.} 108 (2010), 074504.

\bibitem{li2016}
{\sc H. Li and H. Liu}, {\em On anomalous localized resonance for the elastostatic system}, {\sl SIAM J. Math. Anal.} 48 (2016), 3322--3344.

\bibitem{li2018}
{\sc H. Li and H. Liu},
{\em On novel elastic structures inducing polariton resonances with finite frequencies and cloaking due to anomalous localized resonance}, {\sl J. Math. Pures Appl.} 120 (2018), 195--219.

\bibitem{Liu2009}
{\sc H. Liu},
{\em Virtual reshaping and invisibility in obstacle scattering}, {\sl Inverse Probl.} 25 (2009), 045006.

\bibitem{Liu2023a}
{\sc H. Liu and Z-Q. Miao and G-H. Zheng},
{\em A mathematical theory of microscale hydrodynamic cloaking and shielding by electro-osmosis}, {\sl arXiv.2302.07495}.

\bibitem{Liu2023b}
{\sc H. Liu and Z-Q. Miao and G-H. Zheng},
{\em Enhanced Microscale Hydrodynamic Near-cloaking using Electro-osmosis}, {\sl arXiv.2310.14635}.

\bibitem{Ma2014}
{\sc Y. Ma, Y. Liu,  M. Raza, Y. Wang and S. He},
{\em Experimental demonstration of a multiphysics cloak: manipulating heat flux and electric current simultaneously}, {\sl Phys. Rev. Lett.} 113 (2014), 205501.

\bibitem{Narayana2012}
{\sc S. Narayana and Y. Sato},
{\em Heat Flux Manipulation with Engineered Thermal Materials}, {\sl Phys. Rev. Lett.} 108 (2012), 214303.

%
%

\bibitem{Pendry2006}
{\sc J. Pendry, D. Schurig and D. Smith},
{\em Controlling electromagnetic fields}, {\sl Science}, 12(5781): (2006), pp. 1780--1782.

\bibitem{Raza2015}
{\sc M. Raza, Y. Liu, Y. Ma}, {\em A multi-cloak bifunctional device},{\sl J. Appl. Phys.} 117 (2) 2015, 024502.

\bibitem{Sun2020}
{\sc F. Sun,  Y. Liu and S. He}, {\em Surface transformation multi-physics for controlling electromagnetic and acoustic waves simultaneously}, {\sl Optics Express}, 28(1) (2020), 94--106.

\bibitem{Stenger2012}
{\sc N. Stenger, M. Wilhelm and M. Wegener},
{\em Experiments on Elastic Cloaking in Thin Plates}, {\sl Phys. Rev. Lett.} 108 (2012), 014301.

%

\bibitem{Wang2021}
{\sc B. Wang , T. M. Shih  and  J. Huang},
{\em Transformation heat transfer and thermo-hydrodynamic cloaks for creeping flows: manipulating heat fluxes and fluid flows simultaneously}, {Appl. Therm. Eng.} 190 (2021), 116726.

\bibitem{Xu2015}
{\sc J. Xu, X. Jiang, N. Fang, E. Georget, R. Abdeddaim, J. M. Geffrin, M. Farhat, P. Sabouroux, S. Enoch and S. Guenneau},
{\em Molding acoustic, electromagnetic and water waves with a single cloak}, {\sl Scientific Reports}, 5(1) (2015), 1--13.

\bibitem{Xu2020}
{\sc L. Xu, X. Zhao, Y. Zhang, and J. Huang},
{\em Tailoring dipole effects for achieving thermal and electrical invisibility simultaneously}, {\sl The European Physical Journal B}, 93 (2020), 1--6.

\bibitem{Yang2012}
{\sc F. Yang, Z. Mei, T. Jin and T. Cui},
{\em dc ElectricInvisibility Cloak}, {\sl Phys. Rev. Lett.} 109 (2012), 053902.

\bibitem{Yeung2020}
{\sc W.S. Yeung, P. Mai and R.J. Yang}, {\em Cloaking: Controlling thermal and hydrodynamic fields simultaneously}, {\sl Phys. Rev. Appl.} 13 (2020), 064030.

\bibitem{Yang2016}
{\sc Y. Yang, H. Wang, F. Yu, Z. Xu, and H. Chen}, {\em A metasurface carpet cloak for electromagnetic, acoustic and water waves}, {\sl Sci. Rep.} 6 (2016), 1--6.

\bibitem{Zhang2008}
{\sc S. Zhang, D. Genov, C. Sun and X. Zhang},
{\em Cloaking of Matter Waves}, {\sl Phys. Rev. Lett.} 100 (2008), 123002.

\bibitem{Zhou2020b}
{\sc Y. Zhou, J. Chen, R. Chen, W. Chen, Z. Fan and Y. Ma}, {\em Ultrathin Electromagneti--Acoustic Amphibious Stealth Coats}, {\sl Adv. Opt. Mater.} 8 (2020), 2000200.

\bibitem{Zhou2020a}
{\sc Y. Zhou, J. Chen, L. Liu, Z. Fan, and Y. Ma}, {\em Magnetic--acoustic biphysical invisible coats for underwater objects}, {\sl NPG Asia Mater.} 12 (2020), 1--11.

%
\end {thebibliography}

\end{document}